\numberwithin{equation}{section}
\newtheorem{theo} {Theorem} [section]
\newtheorem{prop}[theo]{Proposition}
\newtheorem{coro}  [theo]     {Corollary}
\newtheorem{lemm}  [theo]     {Lemma}
\newtheorem{rema}  [theo]     {Remark}
\newtheorem{defi}  [theo]     {Definition}
\newcommand{\lk}{\left(}
\newcommand{\rk}{\right)}
\newcommand{\ii}{\mathds{1}}
\begin{document}
\pagestyle{fancy}
\renewcommand{\headrulewidth}{0pt}

\fancyhf{}

\fancyhead[OL]{\leftmark}

\fancyhead[ER]{New proof of Torelli Theorems on the moduli space of
Riemann Surfaces}

\fancyhead[OR,EL]{$\cdot$\ \thepage\ $\cdot$}

\renewcommand{\sectionname}{}
\renewcommand{\sectionmark}[1]{\markboth{}{\thesection\ #1}}
\renewcommand{\sectionmark}[1]{\markright{\thesection\ #1}{}}

\title{New proofs of the Torelli theorems for Riemann Surfaces}

\author{Kefeng Liu}
\address{Department of Mathematics, University of California at Los Angeles,
 Angeles, CA 90095-1555, USA; Center of Mathematical Sciences, Zhejiang
University, Hangzhou, China}
\email{liu@math.ucla.edu, liu@cms.zju.edu.cn}

\author{Quanting Zhao}
\address{Center of Mathematical Sciences, Zhejiang University, Hangzhou 310027, China}
\email{zhaoquanting@126.com}

\author{Sheng Rao}
\address{Center of Mathematical Sciences, Zhejiang University, Hangzhou 310027, China}
\email{likeanyone@zju.edu.cn}


\subjclass[2010]{Primary 14C34; Secondary 32G15, 32G05}

\date{\today}


\keywords{Torelli problem, Moduli space of Riemann Surfaces,
Teichm\"uller theory, Deformations of complex structures}

\begin{abstract}
In this paper, by using the Kuranishi coordinates on the
Teichm\"uller space and the explicit deformation formula of holomorphic
one-forms on Riemann Surface, we give an explicit expression of the
period map and derive new differential geometric proofs of the Torelli
theorems, both local and global, for Riemann Surfaces.
\end{abstract}

\maketitle

\section{Introduction}
The theme of this paper is to present a new differential geometric
understanding of the Torelli problems of Riemann Surfaces, which are
central topics in the study of the complex structures of Riemann
Surface. The Torelli problems are usually divided into two types:
local Torelli and global Torelli. These two problems are about the
immersion and injectivity of the period map from the moduli space of
Riemann Surfaces to the moduli space of principally polarized
abelian varieties, respectively.

Two key points of this paper are the use of the Kuranishi
coordinates on the Teichm\"{u}ler space $\mathcal {T}_g$ of Riemann
Surface of genus $g$ and the explicit deformation formula of
holomorphic one-forms in Section \ref{Kc}. Roughly speaking, the
Kuranishi coordinate chart of $\mathcal {T}_g$ is given by
\[ \begin{array}{ccc}
(B,b_0) & \rightarrow & \mathcal{T} _g \\
t & \rightarrow & [X_t,[F_t]],
\end{array} \]
where the triple $(\varpi,\varphi,F)$ is the Kuranishi family of
Riemann Surface with the Teichm\"{u}ller structure of $(X_0,[F_0])$. Let
us write $(B,b_0)$ as $\Delta_{p,\epsilon}$, where $p$ denotes the
point $[X_0,[F_0]] \in \mathcal {T}_g$. Then, given a global
holomorphic one-form $\theta \in H^0(X_p,\Omega_{X_p}^1)$ on $X_p$,
we have the following deformation formula $\theta (t)$ of $\theta$
for small $t$ on $X_t$:
\begin{equation*}
\begin{split}
\theta(t) & = \theta + \sum_{i=1}^n t_i \lk\mathbb {H} (\mu_i
\lrcorner \theta) + df_i\rk + \sum_{|I|\geq2} t^I \lk\sum_{j=1}^n
\mathbb {H}( \mu_j \lrcorner \eta_{(i_1,\cdots,i_j -1,\cdots,i_n)} )
+ df_{j,(i_1,\cdots,i_j -1,\cdots,i_n)}\rk,
\end{split}
\end{equation*}
where $\eta_{(i_1,\cdots,i_n)}$ is a sequence of $(1,0)$-forms on $X_p$,
$f_{j,(i_1,\cdots,i_j-1,\cdots,i_n)}\in C^{\infty}(X_p)$ and $|I| = \sum_{j=1}^{3g-3}i_j$.
Here and
henceforth $\mathbb{H}$ denotes the harmonic projection on
$(X_p,\omega_p)$, where $\omega_p$ is the Poinc\'{a}re metric on
$X_p$, and $n=3g-3$. An application of this to the canonical basis
$\{ \theta_p^{\alpha} \}_{\alpha=1}^g$ of $H^0(X_p,\Omega^1_{X_p})$
with respect to the symplectic basis $\{ A_{\gamma},B_{\gamma}
\}_{\gamma=1}^g$ for $\Delta_{p,\epsilon}$ tells us that
$$
\theta_{p}^{\alpha}(t)= \theta_{p}^{\alpha} + \sum_{i=1}^n t_i
\lk\mathbb {H} (\mu_i \lrcorner \theta^{\alpha}_{p}) + df^{\alpha}_i
\rk + \sum_{|I|\geq2} t^I \lk\sum_{j=1}^n \mathbb {H} ( \mu_j
\lrcorner \eta^{\alpha}_{(i_1,\cdots,i_j -1,\cdots,i_n)} ) +
df^{\alpha}_{j,(i_1,\cdots,i_j -1,\cdots,i_n)}\rk.
$$
The $g \times g$ matrix $A(t)$ is naturally defined as:
$$\sum_{|I|\geq1} t^I \lk\sum_{j=1}^n \mathbb {H} ( \mu_j \lrcorner
\eta^{\alpha}_{(i_1,\cdots,i_j -1,\cdots,i_n)}\rk
=A(t)^{\alpha}_{\beta} \bar{\theta}_{p}^{\beta}.$$ Meanwhile, let
$\pi_p$ be the $B$ period matrix of $\{ \theta^{\alpha}_p
\}_{\alpha=1}^g$. Then the period map $\Pi:\ \mathcal{ T }_g
\rightarrow \mathcal{ H }_g$ to the Siegel upper half space can be
written down explicitly:
$$
\Pi(t)= \begin{pmatrix}
\bar{\pi}_p & \pi_p \\
\ii_g  &  \ii_g \\
\end{pmatrix}\curvearrowright A(t)^{T},
$$
where the action $\curvearrowright$ is given by
$$\begin{pmatrix} C_1 & C_2 \\ C_3 & C_4 \\ \end{pmatrix}_{2g\times 2g} \curvearrowright Z = ( C_1 Z+ C_2 ) ( C_3 Z + C_4
)^{-1};$$ The transition formula between $\Pi(t)$ and $\Pi(\tau)$ of
two adjacent Kuranishi coordinates is
$$
\Pi(t) = L_{pq} \curvearrowright \Pi(\tau),
$$
where $L_{pq}$ is defined at the end of the proof of Theorem
\ref{Transt}.

Let $\Gamma_g$ be the mapping class group of Riemann Surface of
genus $g$, which has a natural representation in the symplectic
group $\mathrm{Sp} ( g , \mathbb {Z} )$ with integral coefficients,
written as $\rho:\, \Gamma_g \rightarrow \mathrm {Sp} ( g , \mathbb
{Z} )$. The moduli space $\mathcal {M}_g$ of Riemann Surfaces of
genus $g$ is the quotient space of $\mathcal {T}_g$ by $\Gamma_g$,
while $\mathcal {A}_g = \mathcal {H}_g / \mathrm {Sp} ( g , \mathbb
{Z} )$ is known as the moduli space of principally polarized abelian
varieties. In Section \ref{lTT}, we first give a proof of the
following two well-known local Torelli theorems by our deformation method.

\begin{theo} $a)$ \emph{(Local Torelli Theorem 1)}
The period map $\Pi: \mathcal{ T }_g \rightarrow \mathcal{ H }_g$ is
an immersion on the non-hyperelliptic locus and also when restricted
to the hyperelliptic locus for $g\geq3$; while for $g=2$, $\Pi$ is
an immersion on the whole $\mathcal { T }_g$ .

$b)$ \emph{(Local Torelli Theorem 2)}  For $g\geq2$, the period map
$\mathcal {J}: \mathcal {M}_g \rightarrow \mathcal {A}_g$ is an
immersion.
\end{theo}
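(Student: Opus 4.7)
The plan is to reduce the immersion condition to a linear-algebraic injectivity question at the level of tangent spaces, read off from the explicit formula for $\Pi(t)$ derived above. Fix a Kuranishi chart $\Delta_{p,\epsilon}$ at $p=[X_0,[F_0]]$. Since $\Pi(0)=\pi_p$ and $A(0)=0$, differentiating $\Pi(t)=\left(\begin{smallmatrix}\bar{\pi}_p & \pi_p \\ \ii_g & \ii_g\end{smallmatrix}\right)\curvearrowright A(t)^T$ at $t=0$ gives
\begin{equation*}
\left.\frac{\partial \Pi}{\partial t_i}\right|_{t=0} \;=\; (\bar{\pi}_p-\pi_p)\cdot\left.\frac{\partial A^T}{\partial t_i}\right|_{t=0}.
\end{equation*}
Because $\mathrm{Im}(\pi_p)$ is positive definite, the factor $\bar{\pi}_p-\pi_p$ is invertible, so injectivity of $d\Pi|_{t=0}$ reduces to linear independence of the $n=3g-3$ matrices $\partial A/\partial t_i|_{t=0}$.

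Next, I would identify the linear term of $A(t)$ directly from the deformation formula for $\theta_p^\alpha(t)$: it is nothing but the expansion of $\sum_{i=1}^n t_i\,\mathbb{H}(\mu_i\lrcorner\theta_p^\alpha)$ in the antiholomorphic basis $\{\bar{\theta}_p^\beta\}$. So, up to the invertible factor above, $d\Pi|_{t=0}$ is encoded in the intrinsic $\mathbb{C}$-linear map
\begin{equation*}
\Psi_p\colon H^1(X_p,T_{X_p})\longrightarrow\mathrm{Hom}\bigl(H^0(X_p,\Omega^1_{X_p}),\,H^{0,1}(X_p)\bigr),\qquad \mu\longmapsto\bigl(\theta\mapsto\mathbb{H}(\mu\lrcorner\theta)\bigr),
\end{equation*}
which is the classical infinitesimal period map given by cup product with the canonical class. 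Thus the local Torelli statements reduce to injectivity of $\Psi_p$ on the appropriate tangent subspace.

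By Serre duality, $\Psi_p$ is injective if and only if its adjoint, the multiplication-of-sections map
\begin{equation*}
m_p\colon \mathrm{Sym}^2 H^0(X_p,K_{X_p})\longrightarrow H^0(X_p,K_{X_p}^{\otimes 2}),
\end{equation*}
is surjective. For a non-hyperelliptic $X_p$ with $g\geq 3$, surjectivity of $m_p$ is Max Noether's classical theorem. For $g=2$, both sides have dimension $3$ and the three products of the standard basis $\{dx/y,\,x\,dx/y\}$ are visibly independent in $H^0(K_{X_p}^{\otimes 2})$. In the hyperelliptic case with $g\geq 3$, $m_p$ fails to surject; however, one checks that its image equals the $(+1)$-eigenspace $H^0(K_{X_p}^{\otimes 2})^{+}$ of the hyperelliptic involution $\iota^{*}$, of dimension $2g-1$, while the tangent to the hyperelliptic locus at $p$ is precisely the Serre-dual subspace $H^1(X_p,T_{X_p})^{+}$. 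Hence the restriction of $\Psi_p$ to this subspace is injective, yielding the immersion statement along the hyperelliptic locus and completing part $(a)$.

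Part $(b)$ will follow from the equivariance $\Pi\circ\gamma=\rho(\gamma)\cdot\Pi$ for $\gamma\in\Gamma_g$: the map $\Pi$ descends to $\mathcal{J}\colon\mathcal{M}_g\to\mathcal{A}_g$, and since both $\Gamma_g$ and $\mathrm{Sp}(g,\mathbb{Z})$ act properly discontinuously by biholomorphisms, the orbifold differential of $\mathcal{J}$ at $[X_p]$ is identified with $d\Pi|_p$ under the quotient projections, so injectivity of $d\mathcal{J}$ follows from that of $d\Pi$ in part $(a)$. The main obstacle will be the hyperelliptic-case eigenspace bookkeeping, namely verifying that the image of $m_p$ equals $H^0(K_{X_p}^{\otimes 2})^{+}$ and that this eigenspace is Serre-dual to the tangent of the hyperelliptic locus; by contrast, Max Noether's theorem is classical and supplies the non-hyperelliptic case directly.
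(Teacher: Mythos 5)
Your part $(a)$ is correct and is essentially the paper's own argument: from $\Pi(t)=\lk\bar{\pi}_pA(t)^T+\pi_p\rk\lk A(t)^T+\ii_g\rk^{-1}$ the first-order term is $(\bar{\pi}_p-\pi_p)A_1(t)^T=-2\sqrt{-1}M_pA_1(t)^T$, injectivity of which is Serre-dual to surjectivity of the multiplication map $\mathrm{Sym}^2H^0(K_{X_p})\rightarrow H^0(2K_{X_p})$; Max Noether gives the non-hyperelliptic case, the $J$-invariant eigenspace count $2g-1$ together with the identification of $T_p(\mathcal{H}\mathcal{E}\mathcal{T}_g)$ with the invariant part of $H^1(X_p,T_{X_p})$ gives the restricted statement on the hyperelliptic locus, and $g=2$ is the case $2g-1=3g-3$.

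Part $(b)$, however, has a genuine gap. Your descent argument implicitly assumes that $d\Pi$ is injective at every point of $\mathcal{T}_g$, which part $(a)$ does not assert and which is false at hyperelliptic points for $g\geq3$: there the image of the multiplication map is only the $(2g-1)$-dimensional invariant subspace $\lk H^0(2K_{X_p})\rk^J$, so $d\Pi|_p$ annihilates the $(g-2)$-dimensional anti-invariant part of $H^1(X_p,T_{X_p})$, i.e.\ the normal directions to the hyperelliptic locus. Hence identifying the orbifold differential of $\mathcal{J}$ with $d\Pi|_p$ cannot yield an immersion at hyperelliptic points; that case is precisely the content of the Oort--Steenbrink theorem and is where all the work lies. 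The paper's proof of this part is not a formal descent: it passes to $\mathcal{T}or_g/\mathbb{Z}_2$, where the image $\tilde{p}$ of a hyperelliptic point has local model $\Delta^{2g-1}\times(\Delta^{g-2}/\mathbb{Z}_2)$ and Zariski tangent space of dimension $\frac{g(g+1)}{2}$ spanned by the directions $D_k$ ($1\leq k\leq 2g-1$) and the quadratic directions $D_{ij}$ ($2g\leq i\leq j\leq 3g-3$); it then expands the period map to second order using the deformation formula (the terms involving $\eta^{\alpha}_j=-\mathbb{G}\overline{\partial}^{\ast}\partial(\mu_j\lrcorner\theta_p^{\alpha})$), computes $\mathcal{J}^{tor}_{\ast}(D_k)$ and $\mathcal{J}^{tor}_{\ast}(D_{ij})$ explicitly, and proves their linear independence by matching these Dolbeault expressions with the \v{C}ech-cohomological ones of Oort--Steenbrink and Karpishpan (this matching is the point of the Appendix). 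Your proposal contains none of this second-order analysis, so part $(b)$ remains unproved as written; to repair it you must either reproduce the second-order computation on the quotient or invoke and verify the identification with the known \v{C}ech calculation, as the paper does.
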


Write  the quotient space of the Teichm\"{u}ller space
$\mathcal {T}_g$ by the Torelli group $T_g$ as $\mathcal {T}or_g$,
which has a natural
$\mathbb{Z}_2$ action. Recall that the Torelli group $T_g$ is the
kernel of the representation $\rho:\, \Gamma_g \rightarrow \mathrm
{Sp} ( g , \mathbb {Z} )$. Then we will present a new proof of the
following global Torelli theorem in Section \ref{gTT}:
\begin{theo}
$\mathcal {J}^{tor}: \, \mathcal {T}or_g / \mathbb {Z}_2 \rightarrow
\mathcal {H}_g$ is an embedding for $g\geq3$.
\end{theo}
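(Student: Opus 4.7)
The plan is to combine the explicit period map formula from Section 1 with Local Torelli Theorem 1 and a monodromy argument based on the transition formula $\Pi(t) = L_{pq} \curvearrowright \Pi(\tau)$. First I would establish that $\mathcal{J}^{tor}$ is well-defined and holomorphic: the period map $\Pi$ on $\mathcal{T}_g$ is $\Gamma_g$-equivariant via $\rho$, so it is invariant under $T_g = \ker \rho$, while the hyperelliptic involution acts as $-\ii$ on $H^1$ and therefore fixes the normalized period matrix, yielding descent to $\mathcal{T}or_g / \mathbb{Z}_2$. Together with Local Torelli Theorem 1, and a direct check that on the hyperelliptic locus the $\mathbb{Z}_2$ quotient exactly compensates for the failure of $\Pi$ to be an immersion on $\mathcal{T}_g$ --- a check one can perform using the explicit differential of $\Pi$ read off from the deformation formula for $\theta_p^\alpha(t)$ --- this shows that $\mathcal{J}^{tor}$ is an everywhere immersion.

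Next I would prove injectivity. Suppose $[p_1], [p_2] \in \mathcal{T}or_g / \mathbb{Z}_2$ have the same image; lift them to Teichm\"uller representatives $\tilde p_1, \tilde p_2$, join them by a path $\gamma$, and cover $\gamma$ by finitely many Kuranishi charts. The transition formulas $\Pi(t) = L_{pq} \curvearrowright \Pi(\tau)$ along consecutive overlaps compose into a single symplectic matrix $L \in \mathrm{Sp}(g,\mathbb{Z})$ satisfying $\Pi(\tilde p_2) = L \curvearrowright \Pi(\tilde p_1)$. The assumption $\mathcal{J}^{tor}([p_1]) = \mathcal{J}^{tor}([p_2])$ forces $L$ to stabilize $\Pi(\tilde p_1) \in \mathcal{H}_g$. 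For $g \geq 3$ the stabilizer of a Jacobian point in $\mathrm{Sp}(g,\mathbb{Z})$ is generated by $\pm \ii_{2g}$ together with the symplectic image of $\mathrm{Aut}(X_{\tilde p_1})$, and the non-trivial automorphisms reduce to the hyperelliptic involution along the hyperelliptic locus. Hence $L \in \rho(\Gamma_g) \cdot \langle -\ii_{2g}\rangle$, which shows that $\tilde p_1$ and $\tilde p_2$ lie in a common orbit of $T_g \times \mathbb{Z}_2$ and $[p_1] = [p_2]$.

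The main obstacle is controlling the stabilizer of the period matrix inside $\mathrm{Sp}(g,\mathbb{Z})$ and identifying the symplectic element $L$ produced by the transition cocycle with one actually induced by a mapping class. This step is the genuinely global input that goes beyond the infinitesimal information in the deformation formula; pointwise it is essentially the classical Torelli assertion that the polarized Jacobian remembers the curve up to the hyperelliptic involution. Within the framework of this paper I expect the verification to proceed by matching higher-order Taylor coefficients of $\Pi(t)$ at $\tilde p_1$ against those of $L \curvearrowright \Pi(\tau)$ at $\tilde p_2$, using the explicit expansion $\Pi(t) = \begin{pmatrix} \bar\pi_p & \pi_p \\ \ii_g & \ii_g \end{pmatrix} \curvearrowright A(t)^T$ to recover the Beltrami differentials, and hence the complex structure, up to the hyperelliptic involution. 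The hypothesis $g \geq 3$ enters both in the stabilizer computation and in guaranteeing that the hyperelliptic locus has positive codimension, so that the $\mathbb{Z}_2$ quotient is non-trivial only along a proper analytic subvariety of $\mathcal{T}or_g$.
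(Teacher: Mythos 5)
Your first paragraph (descent to $\mathcal{T}or_g/\mathbb{Z}_2$ and the immersion statement via Local Torelli) agrees with the paper's setup, but the injectivity argument has a genuine gap. On Torelli space the symplectic marking is part of the data, so $\mathcal{J}^{tor}([p_1])=\mathcal{J}^{tor}([p_2])$ means the two period matrices are literally equal in $\mathcal{H}_g$; the cocycle $L$ you assemble from the transition formulas is only bookkeeping for chart bases and carries no new information. The substantive claim is that two distinct marked Riemann surfaces with identical period matrix are biholomorphic compatibly with the markings (up to sign), and neither of your two devices delivers it. The stabilizer claim --- that the stabilizer of a Jacobian point in $\mathrm{Sp}(g,\mathbb{Z})$ is generated by $\pm\ii_{2g}$ and the symplectic image of $\mathrm{Aut}(X_{\tilde p_1})$ --- is itself a consequence of the strong Torelli theorem (a priori the stabilizer is the automorphism group of the polarized abelian variety, and identifying that with curve automorphisms is exactly the kind of statement being proved), so invoking it is circular. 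And even granting it, ``$L$ stabilizes $\Pi(\tilde p_1)$'' only restates that the images coincide; it does not place $\tilde p_2$ in the $\widetilde{T}_g$-orbit of $\tilde p_1$ --- that implication is the theorem. Finally, ``matching higher-order Taylor coefficients of $\Pi$'' at the two centers cannot work: local Torelli makes $\Pi$ injective on a single Kuranishi chart, but the two points are in general far apart in $\mathcal{T}or_g$, and jets of $\Pi$ at two separate centers recover the Beltrami differentials only relative to each center's own reference surface; they provide no isomorphism between the two different Riemann surfaces.

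The paper supplies the missing global input by working inside the common Jacobian rather than in $\mathcal{H}_g$. From equality of the (marked) Jacobians it produces a diffeomorphism $\phi:C\rightarrow C'$ with $[f\phi^{-1}f'^{-1}]\in T_g$, shows via the period relations that $\phi^{\ast}[\theta'^{\alpha}]=[\theta^{\alpha}]$, hence $\phi^{\ast}\theta'^{\alpha}=\theta^{\alpha}+df^{\alpha}$, and then the whole weight of the proof goes into showing $f^{\alpha}\equiv 0$: the Abel--Jacobi images $W_1$ and $V_1$ of $C$ and $C'$ differ by the varying vector $(f^1,\dots,f^g)$, and Riemann's theorem $\Theta=W_{g-1}-\frac{K}{2}$ together with the intersection theory of the loci $W^r_d$ (the $\ominus$ operation, Propositions \ref{minus1}--\ref{InterT2}, and the minimal-$r$ argument with its two cases) forces $V_1=W_1$; a short local computation then shows $\phi$ is holomorphic, so the two points lie in one $\widetilde{T}_g$-orbit. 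Some global theta-divisor/Abel--Jacobi argument of this kind (or an equivalent classical input) is unavoidable here; it cannot be replaced by the infinitesimal information in the deformation formula plus symplectic bookkeeping, which is where your proposal stops.
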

We also prove that the period map $\Pi$ maps the $\Gamma_g$ orbit of
$\Delta_{p,\epsilon}$ onto the $\mathrm{Sp}(g,\mathbb{Z})$ orbit of
its image in $\mathcal {H}_g$. More precisely, let
$\Delta_{p,\epsilon}$ be a Kuranishi coordinate chart on $\mathcal{T}_g$
and $\Delta_{p,\epsilon}^{ [\phi] } :=[\phi]\Delta_{p,\epsilon}$ for
$[\phi]\in \Gamma_g$. Set
$$\rho([\phi])=\begin{pmatrix} U & V \\ R & S \\\end{pmatrix} \in \mathrm{Sp}( g,\mathbb{Z} ).$$
Then on $\Delta_{p,\epsilon}^{ [\phi] }$, the period map
$\widetilde{\Pi}(t)$ has the following relation with $\Pi(t)$:
$$\widetilde{\Pi}(t)= \begin{pmatrix} S & R \\ V & U \\ \end{pmatrix}\curvearrowright
\Pi(t).$$ Based on these, we prove that two $\Gamma_g$ orbits of
$\mathcal {M}_g$, if mapped to the same $\mathrm {Sp}(g,\mathbb{Z})$
orbit by $\mathcal {J}$, must coincide, and thus prove the main
result of this paper:
\begin{theo}[Torelli Theorem] The period map $\mathcal {J}: \, \mathcal {M}_g \rightarrow \mathcal {A}_g$
is injective for $g\geq2$.
\end{theo}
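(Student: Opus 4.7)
The plan is to deduce injectivity of $\mathcal{J}:\mathcal{M}_g\to\mathcal{A}_g$ from the global Torelli theorem above (injectivity of $\mathcal{J}^{tor}:\mathcal{T}or_g/\mathbb{Z}_2\to\mathcal{H}_g$) by absorbing the $\mathrm{Sp}(g,\mathbb{Z})$-ambiguity on the Siegel side into a $\Gamma_g$-shift on the Teichm\"uller side, using the transition formula for $\widetilde{\Pi}$ displayed just above.

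Concretely, suppose $[X_1],[X_2]\in\mathcal{M}_g$ satisfy $\mathcal{J}([X_1])=\mathcal{J}([X_2])$. Lift them to $\tau_1,\tau_2\in\mathcal{T}_g$ sitting in Kuranishi charts $\Delta_{p_i,\epsilon}$. The hypothesis furnishes $M=\begin{pmatrix}S & R\\V & U\end{pmatrix}\in\mathrm{Sp}(g,\mathbb{Z})$ with $\Pi(\tau_2)=M\curvearrowright\Pi(\tau_1)$. By the classical surjectivity of $\rho:\Gamma_g\to\mathrm{Sp}(g,\mathbb{Z})$, I would choose $[\phi]\in\Gamma_g$ with $\rho([\phi])=\begin{pmatrix}U & V\\R & S\end{pmatrix}$; then the transition formula gives $\widetilde{\Pi}(\tau_1)=M\curvearrowright\Pi(\tau_1)=\Pi(\tau_2)$, so $[\phi]\tau_1$ and $\tau_2$ have the same image in $\mathcal{H}_g$. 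For $g\geq 3$, injectivity of $\mathcal{J}^{tor}$ forces them to coincide in $\mathcal{T}or_g/\mathbb{Z}_2$, and projecting further to $\mathcal{M}_g=\mathcal{T}_g/\Gamma_g$ yields $[X_1]=[X_2]$.

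For $g=2$ the cited global Torelli does not apply, so I would conclude by a classical separate argument: every genus two curve is hyperelliptic, and its six Weierstrass points (modulo $\mathrm{PGL}_2$) can be reconstructed from the Jacobian via the theta divisor, which is a translate of the Abel--Jacobi image of the curve itself. The principal obstacle I anticipate in the central $g\geq 3$ step is the bookkeeping of matrix conventions: one must verify both that the block-swapped matrix $\begin{pmatrix}U & V\\R & S\end{pmatrix}$ really belongs to $\mathrm{Sp}(g,\mathbb{Z})$ whenever $M=\begin{pmatrix}S & R\\V & U\end{pmatrix}$ does (so that surjectivity of $\rho$ is actually applicable with this choice of blocks), and that feeding this $\rho([\phi])$ into the transition formula produces precisely the action $M\curvearrowright$ on the Siegel side, without stray transposes, inverses, or sign twists. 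Once those matrix identities are checked, the remaining step is a clean diagram chase through the tower $\mathcal{T}_g\to\mathcal{T}or_g\to\mathcal{T}or_g/\mathbb{Z}_2\to\mathcal{M}_g$.
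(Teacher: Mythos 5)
Your $g\geq 3$ argument is essentially the paper's own proof: the paper assumes the two $\Gamma_g$-orbits map to the same $\mathrm{Sp}(g,\mathbb{Z})$-orbit, writes $\Pi(p)=L\curvearrowright\Pi(q)$, picks $[\phi]\in\rho^{-1}(\nu(L))$ where $\nu$ is exactly your block swap, applies Proposition \ref{orbit} to get $\Pi([\phi]q)=L\curvearrowright\Pi(q)=\Pi(p)$, and concludes via the one-to-one correspondence between $\widetilde{T}_g$-orbits and Jacobians from Theorem \ref{gTT1}; the matrix worry you flag is harmless, since $\nu$ is conjugation by $\begin{pmatrix}0&\ii_g\\ \ii_g&0\end{pmatrix}$ and hence preserves $\mathrm{Sp}(g,\mathbb{Z})$, and the transition formula of Proposition \ref{orbit} produces precisely the action $\begin{pmatrix}S&R\\V&U\end{pmatrix}\curvearrowright$ with no extra transposes or inverses. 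The only divergence is $g=2$: the paper does not need an external argument, because Corollary \ref{gequal2} observes that the injectivity argument of Theorem \ref{gTT1} still applies when $g=2$ (the $\mathbb{Z}_2$-action is trivial since every genus-two curve is hyperelliptic, so $\widetilde{T}_g$- and $T_g$-orbits agree and $\mathcal{J}^{tor}$ is an open embedding), whereas you substitute the classical reconstruction of a genus-two curve from its theta divisor via Riemann's theorem; that alternative is correct, just sketched, and costs you the uniformity of treating both genera inside the same deformation-theoretic framework.
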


The maps considered in this paper can be summarized in the following
diagram:
$$\xymatrix{ \mathcal {T}_g \ar[dddr]_{\Gamma_g}
\ar"2,4"^{\Pi}
\ar[dr]|-{T_g} \\
& \mathcal{T}or_g \ar[dr]_{\mathbb{Z}_2}\ar[dd] \ar[rr]^{\mathcal
{J}^{tor}}&
& \mathcal {H}_g \ar[dd]^{\mathrm{Sp} ( g,\mathbb{Z} )} \\
&&\mathcal{T}or_g/\mathbb{Z}_2\ar@{^{(}->}[ru]_{\mathcal {J}^{tor}}&\\
& \mathcal{M}_g \ar@{^{(}->}[rr]^{\mathcal {J}} && \mathcal{A}_g
.}$$

It is well-known that the global Torelli theorem holds by R.
Torelli's result \cite{T} and also the modern proofs \cite{A,Wei}
while the local Torelli holds due to the work of \cite{OS}. A more
complete list of the history about Torelli problems is contained in
the bibliographical notes on Page $261$ of \cite{ACGH}.

 \vspace{0.3cm}
$\mathbf{Acknowledgement}$ The authors dedicate this paper to Prof.
Andrey Todorov, who unexpectedly passed away in March 2012 during
his visit to Jerusalem. He had taught graduate courses at the Center
of Mathematical Sciences of Zhejiang University on deformation
theory and Hodge structures every summer in the recent years. The last
two authors would also like to express their gratitude to Dr.
Fangliang Yin, Prof. Fangyang Zheng, and Dr. Shengmao Zhu for many
inspirational discussions at CMS of Zhejiang University, and also to
Prof. Richard Hain for communications.

\section{Kuranishi Coordinates On $\mathcal {T}_g$}\label{Kc}

We first recall some basics of the construction of Kuranishi
coordinate charts, which is based on \cite{AC}. Fix a compact
topological surface $\Sigma$ of genus $g$ with $g\geq2$. The pair
$(C,[f])$ is a Riemann Surface $C$ with the Teichm\"{u}ller
structure $[f]$, where $f$ is an orientation-preserving
homeomorphism from $C$ to $\Sigma$ and $[f]$ denotes the isotopic
class represented by $f$. An isomorphism between Riemann Surfaces
with the Teichm\"{u}ller structures, $(C,[f])$ and $(C',[f'])$, is a
biholomorphic map $\phi$ from $C$ to $C'$ such that $[f]=[f'\phi]$.
The equivalence classes of all compact Riemann Surfaces of genus $g$
with the Teichm\"{u}ller structure, modulo the isomorphism
equivalences, actually constitute the Teichm\"{u}ler space $\mathcal
{T}_g$ of Riemann Surfaces of genus $g$. Thus an isomorphism class
of $[C,[f]]$ is a point in $\mathcal{T}_g$.

From the construction of Hilbert scheme, the existence of the Kuranishi
family of Riemann Surfaces follows. To be more precise, for every
Riemann Surface $C$, there exists a holomorphic deformation
$(\varpi,\varphi)$
\[ \begin{array}{cc} \varpi:\, \mathcal {X} \rightarrow B, & \varphi:\, C \stackrel{\simeq}{\rightarrow} X_{b_0} \\ \end{array} \]
of $C$ parametrized by a pointed base $(B,b_0)$, a complex manifold
with $\dim_{ \mathbb{C} } B= 3g-3$, and this deformation is
universal at $b_0$, actually universal at every point $b$ of $B$.
The pair $(\varpi,\varphi)$ is called the Kuranishi family of $C$.
For any other deformation $(\iota,\psi)$
\[ \begin{array}{cc} \iota:\, \mathcal {X}' \rightarrow B',\ \psi:\, C \stackrel{\simeq}{\rightarrow} X'_{b_0 '} \\ \end{array} \]
of $C$, there exists a unique map $(\phi,\Phi)$ in a small neighborhood of $b_0'$ such that the following diagram commutes
$$\xymatrix{\mathcal {X}' \ar[r]^{\Phi}\ar[d]_\iota& \mathcal {X}\ar[d]^\varpi\\
(B',b_0 ')\ar[r]^{\phi}  & (B,b_0),}$$ where
$\varphi^{-1}\Phi_{b_0'}\psi=\ii_{C}$ and $\mathcal {X'}$ is
isomorphic to the pullback family $\Phi^{\ast} \mathcal {X}$ on the
small neighborhood of $b_0'$. Accordingly, we also have a family of
Riemann Surfaces with the Teichm\"{u}ller structure $(X_b,[f_b])$,
i.e., $\varpi:  \mathcal {X} \rightarrow B$ together with local
topological trivialization $F^{\alpha}: \, \left. \mathcal {X}
\right|_{U_{\alpha}} \rightarrow \Sigma \times U_{\alpha}$, where
$\bigcup_{\alpha} U_{\alpha}$ is an open covering of $B$ such that
$[F^{\alpha}_b]=[f_b]$ with $b \in U_{\alpha}$. For any Riemann Surface with the Teichm\"{u}ller structure $(C,[f])$, Kuranishi
family also exists and satisfies exactly analogous universal
properties to the one without the Teichm\"{u}ller structure above.
Possibly shrinking $B$, we can describe the Kuranishi family of
$(C,[f])$ as a triple $(\varpi,\varphi,F)$ given by
\[ \begin{array}{ccc}
\varpi:\,  \mathcal {X} \rightarrow B, & \varphi:\, C
\stackrel{\simeq}{\rightarrow} C_{b_0}, & F:\, \mathcal {X}
\rightarrow \Sigma \times B,
\end{array} \]
where $F$ is a topological trivialization such that $F_{b_0}\varphi
= f$.

A Kuranishi coordinate chart of $\mathcal {T}_g$ is given by
\[ \begin{array}{ccc}
(B,b_0) & \rightarrow & \mathcal{T} _g \\
t & \rightarrow & [X_t,[F_t]],
\end{array} \]
where the triple $(\varpi,\varphi,F)$ is the Kuranishi family of
$(C,[f])$. From the classical Ehresmann's theorem, there is a
natural diffeomorphism $\Psi:\, X_{b_0} \times B \rightarrow
\mathcal {X}$; all the fibers of $\varpi:\, \mathcal {X} \rightarrow
B$
\[ \xymatrix{
\Sigma \times B & \\
\mathcal {X} \ar[u]^{F} \ar[r]^-{\Psi} & X_{b_0} \times B \\
} \] share the same differential structure as $X_{b_0}$. From this
point of view, for every $b \in B$,
the map $F_b \Psi_b^{-1}$ can be deformed to $F_{b_0} \Psi_{b_0}^{-1}$, i.e.
$[F_b \Psi_b^{-1}] = [F_{b_0} \Psi_{b_0}^{-1}]$. Let $\omega:\,
H_1(\Sigma,\mathbb{Z}) \times H_1(\Sigma,\mathbb{Z}) \rightarrow
\mathbb{Z}$ be the intersection pairing on $\Sigma$. The symplectic
basis of $H_1(\Sigma,\mathbb{Z})$ on $(\Sigma,\omega)$ gives, from
the map $\Psi F^{-1}$, one such basis on $X_{b_0}$, which is enjoyed
by the whole Kuranishi family $\mathcal {X}$ over the Kuranishi
coordinate chart $B$. Later on we will write $(B,b_0)$ as
$\Delta_{p,\epsilon}$, where $p$ denotes the point $[C,[f]]$ in
$\mathcal {T}_g$, and $\Delta_{p,\epsilon} = \{ t \in \mathbb {C}^n
\big| \| t \| <\epsilon,\ t(p)=0 \}$ with $n=3g-3$.

Fix the representation $\rho:\, \Gamma_g \rightarrow
\mathrm{Sp}(g,\mathbb{Z})$, where $\Gamma_g$ is the  mapping class
group, namely the isotopic classes of orientation preserving
homeomorphisms of $\Sigma$, and $\mathrm{Sp}(g,\mathbb{Z})$ is
actually $\mathrm{Aut} ( H_1(\Sigma,\mathbb{Z}),\omega )$. Now we have two Kuranishi
coordinate charts $\Delta_{p,\epsilon}$ and $\Delta_{q,\epsilon'}$
with $\Delta_{p,\epsilon} \cap \Delta_{q,\epsilon'} \neq \emptyset
$. Let $( \mathcal {X} , F )$ and $( \mathcal {Y} , G )$ denote the
two Kuranishi families with Teichm\"{u}ler structures over
$\Delta_{p,\epsilon}$ and $\Delta_{q,\epsilon'}$, respectively. Let
$r \in \Delta_{p,\epsilon} \cap \Delta_{q,\epsilon'}$. The
definition of Kuranishi coordinates tells us that
$[X_{t(r)},F_{t(r)}] = [Y_{\tau(r)},G_{\tau(r)}]$. Then we have a
biholomorphic map $\phi: X_{t(r)} \rightarrow Y_{\tau(r)}$ such that
$[F_{t(r)}]=[G_{\tau(r)} \phi]$. It is described in the following
picture
\[ \xymatrix{
\Sigma & & & \Sigma \\
X_p \ar[u]^{F_0} \ar[r]^-{\mathrm{diffeo}\Psi_{X}} & X_{t(r)} \ar[r]^-{\phi}
& Y_{\tau(r)} \ar[r]^-{\mathrm{diffeo}\Psi_{Y}} & Y_q \ar[u]^{G_0} \\
} \] that $[G_{0}\Psi_{Y}\phi\Psi_{X}F_{0}^{-1}]$ gives us an
element of $\Gamma_g$. From the representation $\rho$, a matrix in
$\mathrm{Sp}(g,\mathbb{Z})$ is obtained, linking
the two symplectic bases of the two Kuranishi coordinates.

\subsection{Small Deformation Of Holomorphic One-Forms}
Let $\Delta_{p,\epsilon}$ be a Kuranishi coordinate chart centered
at $p \in \mathcal{T}_g$ as above. Denote the corresponding
Kuranishi family on $\Delta_{p,\epsilon}$ by $\varpi:\, \mathcal {X}
\rightarrow \Delta_{p, \epsilon}$ with the central fiber
$\varpi^{-1}(p)=X_p$. Let $\theta \in H^0(X_p,\Omega_{X_p}^1)$ be a
global holomorphic one-form on $X_p$. We will construct an explicit
formula $\theta(t) \in H^0(X_t,\Omega_{X_t}^1)$, the holomorphic
deformation of $\theta$.

Denote the Poincar\'{e} metric on $X_p$ by $\omega_p$. Fix
$\{\mu_i\}^{n}_{i=1}$ as a basis of harmonic
$T_{X_p}^{(1,0)}$-valued $(0,1)$-form on $(X_p,\omega_p)$, namely
$\mathbb{H}^{0,1}_{\overline{\partial}}(X_p,T_{X_p}^{(1,0)})$. And
let $\mu(t)=\sum_{i=1}^{n}t_i \mu_i$ denote the Beltrami
differential of the Kuranishi family $\varpi:\, \mathcal {X}
\rightarrow \Delta_{p, \epsilon}$.

\begin{theo}\label{etat}
Given $\theta \in H^0(X_p,\Omega_{X_p}^1)$, there exists a unique
$(1,0)$-form $\eta(t)$ on $X_p$, which is holomorphic in $t$ for
sufficiently small $t$, satisfying
\begin{enumerate}
\item $\mathbb{H} \lk \eta(t) \rk=\theta$, where $\mathbb{H}$ is the harmonic projection on
$(X_p,\omega_p)$,
\item $\theta(t)=\lk\ii+\mu(t) \rk\lrcorner \eta(t) \in H^0(X_t,\Omega_{X_t}^1)$
\end{enumerate}
and $\theta(t)$ is the desired deformation of $\theta$.
\end{theo}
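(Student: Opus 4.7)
\medskip
\noindent\textbf{Proof proposal.}
The plan is to convert condition (2) into a linear elliptic equation on the fixed Riemann surface $X_p$, solve it formally in $t$ by a recursion built from Hodge theory on $(X_p,\omega_p)$, and then establish convergence of the resulting power series for $\|t\|$ sufficiently small.

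First I would translate holomorphicity on $X_t$ into a PDE on $X_p$. On a Riemann surface a $(1,0)$-form is holomorphic iff it is $d$-closed, and since all $(2,0)$- and $(0,2)$-forms vanish on $X_p$, a short computation in an isothermal coordinate gives
$$
d\bigl((\ii+\mu(t))\lrcorner\eta(t)\bigr)=\bar\partial\eta(t)+\partial\bigl(\mu(t)\lrcorner\eta(t)\bigr).
$$
Thus condition (2) is equivalent to the single equation
$$
\bar\partial\eta(t)+\partial\bigl(\mu(t)\lrcorner\eta(t)\bigr)=0\qquad\text{on }X_p.
$$

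Next I would look for $\eta(t)$ as a formal power series $\eta(t)=\theta+\sum_{|I|\geq1}t^{I}\eta_{I}$. Condition (1) forces $\eta_{0}=\theta$ and $\mathbb H(\eta_{I})=0$ for $|I|\geq1$. Substituting $\mu(t)=\sum_{i}t_{i}\mu_{i}$ into the PDE and collecting the coefficient of $t^{I}$ yields the recursion
$$
\bar\partial\eta_{I}=-\sum_{i:\,I_{i}\geq1}\partial\bigl(\mu_{i}\lrcorner\eta_{I-e_{i}}\bigr),
$$
in which the right-hand side involves only strictly lower orders. That right-hand side is a $(1,1)$-form; since $\bar\partial$ annihilates $(0,1)$-forms on $X_p$, one has $\partial(\mu_{i}\lrcorner\eta_{I-e_{i}})=d(\mu_{i}\lrcorner\eta_{I-e_{i}})$, so its integral over $X_p$ vanishes by Stokes. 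Hence the right-hand side lies in $\mathrm{Im}\,\bar\partial$, and Hodge theory produces the unique $(1,0)$-form $\eta_{I}$ solving the recursion with $\mathbb H(\eta_{I})=0$, namely
$$
\eta_{I}=-\bar\partial^{\,\ast}G\sum_{i}\partial\bigl(\mu_{i}\lrcorner\eta_{I-e_{i}}\bigr),
$$
where $G$ is the Green's operator of the $\bar\partial$-Laplacian on $(X_p,\omega_p)$.

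Uniqueness of the formal series is then immediate. The main obstacle, and the only non-algebraic step, is showing that $\sum_{I}t^{I}\eta_{I}$ converges and depends holomorphically on $t$ for $\|t\|$ small. I would handle this by a standard elliptic bootstrap: working in a fixed H\"older space $C^{k,\alpha}(X_p)$, the operator $\bar\partial^{\,\ast}G\,\partial$ is bounded, the finitely many $\mu_{i}$ admit uniform bounds, and a straightforward induction on $|I|$ gives an estimate $\|\eta_{I}\|_{k,\alpha}\leq K^{|I|}$ for some constant $K$ depending only on $(X_p,\omega_p)$ and $\{\mu_{i}\}$. This forces absolute convergence on the polydisk $\{\|t\|<1/K\}$ and holomorphy of $\eta(t)$ in $t$, so $\theta(t)=(\ii+\mu(t))\lrcorner\eta(t)$ is the desired holomorphic deformation of $\theta$.
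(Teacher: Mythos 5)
Your proposal follows essentially the same route as the paper: expand $\eta(t)$ as a formal power series, derive the recursion $\bar\partial\eta_I=-\sum_i\partial(\mu_i\lrcorner\eta_{I-e_i})$ from $d$-closedness of $(\ii+\mu(t))\lrcorner\eta(t)$, solve it by Hodge theory subject to $\mathbb{H}(\eta_I)=0$ (which gives uniqueness), and obtain convergence from standard elliptic estimates in H\"older norms. The only differences are cosmetic: you check the solvability of the $\bar\partial$-equation explicitly via Stokes (the paper leaves this obstruction-vanishing implicit) and write the solution as $-\bar\partial^{*}\mathbb{G}\,\partial(\cdot)$ instead of the paper's $-\mathbb{G}\,\bar\partial^{*}\partial(\cdot)$, which is the same operator.
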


\begin{proof}
The formal power series of $\eta(t)\in A^{1,0}(X_p)$ can be written
out as
$$\eta(t)=\theta+\sum_{i=1}^n t_i\eta_i+\sum_{|I|\geq2}t^I\eta_I,$$
where $I=(i_1,\cdots,i_n)$, $t^I=t_1^{i_1}t_2^{i_2}\cdots t_n^{i_n}$ and $|I|=\sum_{j=1}^n i_j$.

Condition (1) implies
\begin{equation}\label{HV}
\begin{cases}
\mathbb{H}(\eta_i)=0,\\
\mathbb{H}(\eta_I)=0,\ |I|\geq 2.
\end{cases}
\end{equation}
Then, one has
\begin{align*}
\theta(t) &= \lk\ii + \mu(t) \rk \lrcorner \eta(t)\\
&=\lk\ii + \sum_{i=1}^n t_i\mu_i\rk\lrcorner\lk\theta+\sum_{j=1}^n  t_j \eta_j + \sum_{|J|\geq2} t^J \eta_J\rk\\
&= \theta + \sum_{i=1}^n t_i (\eta_i+\mu_i\lrcorner \theta) +
\sum_{|I|\geq2} t^I\lk \eta_{(i_1,\cdots,i_n)} + \sum_{k=1}^n \mu_k
\lrcorner \eta_{(i_1,\cdots,i_k -1,\cdots,i_n)}\rk.
\end{align*}
Since $\theta(t)$ is a holomorphic one-form on $X_t$ from Condition
(2), i.e., $d\theta(t)=0$, which implies
$$\begin{cases}
d(\eta_i+\mu_i\lrcorner \theta)=0,\\
d(\eta_{(i_1,\cdots,i_n)}+\sum_{k=1}^n \mu_k \lrcorner
\eta_{(i_1,\cdots,i_k -1,\cdots,i_n)})=0,
\end{cases}$$
we see that
\begin{equation}\label{PBE}
\begin{cases}
\overline{\partial} \eta_i+\partial{(\mu_i\lrcorner \theta)}=0,\\
\overline{\partial} \eta_{(i_1,\cdots,i_n)}+\partial{(\sum_{k=1}^n
\mu_k \lrcorner \eta_{(i_1,\cdots,i_k -1,\cdots,i_n)})}=0.
\end{cases}
\end{equation}
Combining with \eqref{HV} and solving the
$\overline{\partial}$-equation, we get
\begin{equation}\label{SPBE}
\begin{cases}
\eta_i=-\mathbb{G} \overline{\partial}^{*} \partial{(\mu_i\lrcorner \theta)},\\
\eta_{(i_1,\cdots,i_n)}=-\mathbb{G} \overline{\partial}^{*}
\partial {(\sum_{k=1}^n \mu_k \lrcorner \eta_{(i_1,\cdots,i_k
-1,\cdots,i_n)})}.
\end{cases}
\end{equation}
Here $\mathbb{G}$ denotes the Green operator in the Hodge
decomposition  with respect to the operator $\overline{\partial}$,
and $\mathbb
\ii=\mathbb{H}+(\overline{\partial}\overline{\partial}^*+\overline{\partial}^*\overline{\partial})\mathbb{G}$.
Thus we have proved the uniqueness of $\eta(t)$, which is fixed by
conditions (1) and (2).

Now let us discuss the convergence of the power series constructed
above. By the standard estimates of elliptic operators $\mathbb{G}$,
$\overline{\partial}^*$ and $\partial$, such as in \cite{MK}, we
easily have
$$\|\eta_I\|_{m+\alpha}\leq C^{|I|}\|\theta\|_{m+\alpha},$$
where the constant $C$ depends on $m$, $\alpha$ and $X_p$, and
$\|\cdot\|_{m+\alpha}$ is the H\"{o}lder norm. Consequently the
estimates of $\eta(t)$ yield
\begin{align*}
\| \eta(t) \|_{m + \alpha} &\leq \| \theta \|_{m+\alpha} + \| \theta \|_{m + \alpha} \sum_{\| I \| \geq1} C^{|I|} \epsilon^{|I|}\\
& = \| \theta \|_{m+\alpha} + \| \theta \|_{m + \alpha} \sum_{k\geq1} \sum_{\| I \|\geq k} C^{|I|} \epsilon^{|I|}\\
& = \| \theta \|_{m + \alpha} + \| \theta \|_{m+\alpha} \sum_{k\geq1} C^{k} \epsilon^k C^{k}_{n+k-1}\\
& \leq \| \theta \|_{m + \alpha} + \| \theta \|_{m + \alpha}
\sum_{k\geq1} C^{k} \epsilon^k n^{k},
\end{align*}
where $C^{k}_{n+k-1}$ is the common combinatorial number. By taking
$\epsilon$ smaller than $\frac{1}{2nC}$, we are done.
\end{proof}

\begin{coro}\label{thetat}
The deformation formula of $\theta$, with t small, is given by
\begin{equation*}
\begin{split}
\theta(t) & = \theta + \sum_{i=1}^n t_i \lk\mathbb {H} (\mu_i
\lrcorner \theta) + df_i \rk + \sum_{|I|\geq2} t^I \lk\sum_{j=1}^n
\mathbb {H} ( \mu_j \lrcorner \eta_{(i_1,\cdots,i_j -1,\cdots,i_n)}
) + df_{j,(i_1,\cdots,i_j -1,\cdots,i_n)}\rk
\end{split}
\end{equation*}
where $f_{j,(i_1,\cdots,i_j -1,\cdots,i_n)}\in C^{\infty}(X_p)$.
\end{coro}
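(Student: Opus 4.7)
The plan is to begin with the power-series expansion of $\theta(t)$ obtained inside the proof of Theorem~\ref{etat},
$$\theta(t) = \theta + \sum_{i=1}^n t_i\bigl(\eta_i + \mu_i \lrcorner \theta\bigr) + \sum_{|I|\geq 2} t^I\Bigl(\eta_{(i_1,\ldots,i_n)} + \sum_{j=1}^n \mu_j \lrcorner \eta_{(i_1,\ldots,i_j-1,\ldots,i_n)}\Bigr),$$
and to rewrite each coefficient as ``harmonic part $+$ exact part'' using Hodge theory on $(X_p,\omega_p)$.

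The crucial input is already available in the proof of Theorem~\ref{etat}: the identities $d(\eta_i + \mu_i \lrcorner \theta) = 0$ and $d\bigl(\eta_I + \sum_{k=1}^n \mu_k \lrcorner \eta_{I-\delta_k}\bigr) = 0$ (where $\delta_k$ has a $1$ in the $k$-th slot) were derived as consequences of $d\theta(t) = 0$, so every coefficient of $t^I$ is a $d$-closed $1$-form on $X_p$.

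Next I invoke the Hodge decomposition for $d$ on $1$-forms. Because $X_p$ is K\"ahler, the $d$-Laplacian equals $2\Delta_{\overline{\partial}}$, so the $d$-harmonic projection preserves the bidegree and agrees with $\mathbb{H}$ on each of $A^{1,0}(X_p)$ and $A^{0,1}(X_p)$. Consequently every $d$-closed $1$-form $\alpha = \alpha^{1,0} + \alpha^{0,1}$ admits a decomposition
$$\alpha = \mathbb{H}(\alpha^{1,0}) + \mathbb{H}(\alpha^{0,1}) + df$$
for some $f \in C^{\infty}(X_p)$. Applied to $\alpha = \eta_i + \mu_i \lrcorner \theta$, and using $\mathbb{H}(\eta_i) = 0$ from condition~(1) of Theorem~\ref{etat}, this yields
$$\eta_i + \mu_i \lrcorner \theta = \mathbb{H}(\mu_i \lrcorner \theta) + df_i.$$
The same argument applied to the higher-order coefficient, together with $\mathbb{H}(\eta_I) = 0$ and the linearity of $\mathbb{H}$, produces
$$\eta_I + \sum_{j=1}^n \mu_j \lrcorner \eta_{I-\delta_j} = \sum_{j=1}^n \mathbb{H}\bigl(\mu_j \lrcorner \eta_{I-\delta_j}\bigr) + df_I;$$
redistributing $df_I$ as a sum of the form $\sum_j df_{j,I-\delta_j}$ to match the notation of the corollary is only a labeling convention.

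There is no serious obstacle: the proof is a direct application of Hodge theory to the $d$-closed $1$-forms already extracted in Theorem~\ref{etat}, followed by substitution back into the power-series expansion of $\theta(t)$. The smoothness of each $f_i$ and $f_{j,I-\delta_j}$ is automatic from the regularity of the Green operator used implicitly in solving $df = \alpha - \mathbb{H}_d\alpha$.
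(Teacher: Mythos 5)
Your proof is correct, but it follows a genuinely different route from the paper's. The paper never invokes $d$-Hodge theory: it substitutes the explicit iteration formula $\eta_I=-\mathbb{G}\overline{\partial}^{*}\partial\lk\sum_{j}\mu_j\lrcorner\eta_{(i_1,\cdots,i_j-1,\cdots,i_n)}\rk$ from Theorem \ref{etat} into $\lk\ii+\mu(t)\rk\lrcorner\eta(t)$, so that the coefficient of $t^I$ becomes $\lk\ii-\mathbb{G}\overline{\partial}^{*}\partial\rk$ applied to $\sum_j\mu_j\lrcorner\eta_{(i_1,\cdots,i_j-1,\cdots,i_n)}$; it then decomposes each $(0,1)$-form $\mu_j\lrcorner\eta_{(i_1,\cdots,i_j-1,\cdots,i_n)}$ as its $\mathbb{H}$-part plus $\overline{\partial}f_{j,(i_1,\cdots,i_j-1,\cdots,i_n)}$ (Hodge theory for $\overline{\partial}$ in bidegree $(0,1)$) and uses the operator identity $\ii=\mathbb{H}+\square_{\overline{\partial}}\mathbb{G}$ together with commutation of $\mathbb{G}$ with the relevant operators to turn $-\mathbb{G}\overline{\partial}^{*}\partial\,\overline{\partial}f$ into $\partial\mathbb{G}\square_{\overline{\partial}}f=\partial f$ (modulo the constant $\mathbb{H}f$), so that the exact pieces assemble into $df_{j,(i_1,\cdots,i_j-1,\cdots,i_n)}$. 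You instead use only two facts already recorded in the proof of Theorem \ref{etat} --- the $d$-closedness of each coefficient and $\mathbb{H}(\eta_I)=0$ --- and apply the Hodge decomposition for $d$ to the closed $1$-form coefficients, identifying the $d$-harmonic projection with $\mathbb{H}$ bidegree-wise via $\Delta_d=2\Delta_{\overline{\partial}}$ (automatic on a Riemann surface, which is K\"ahler for any Hermitian metric). Your argument is shorter and more conceptual, and the redistribution of the single exact term $df_I$ into $\sum_j df_{j,(i_1,\cdots,i_j-1,\cdots,i_n)}$ is indeed only a labeling matter; what it forgoes is the explicit expression of these functions in terms of the iteration, which the paper's computation produces directly (though nothing downstream uses it: only the harmonic parts, i.e.\ the matrix $A(t)$, enter later). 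Convergence for small $t$ is inherited coefficientwise from Theorem \ref{etat} in both treatments, exactly as the paper notes in one line.
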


\begin{proof}
From Theorem \ref{etat}, we can easily write out
\begin{eqnarray*}
\theta(t) & = & \lk\ii + \mu(t) \rk\lrcorner \eta(t) \\
& = & \Big(\ii + \sum_{i=1}^n  t_i\eta_i \Big) \lrcorner \Big(
\theta - \sum_{j=1}^n t_j \lk\mathbb{G} \overline{\partial}^*
\partial(\mu_j \lrcorner \theta) \rk
+ \sum_{|J| \geq 2} t^J\eta_J \Big)\\
& = & \theta + \sum_{|I|\geq1} t^I \lk\ii - \mathbb{G}
\overline{\partial}^* \partial \rk
\lk \sum_{j=1}^n \mu_j \lrcorner \eta_{(i_1,\cdots,i_j -1,\cdots,i_n)} \rk \\
& = & \theta + \sum_{|I|\geq1} t^I \lk\ii - \mathbb{G}
\overline{\partial}^* \partial \rk \lk \sum_{j=1}^n \mathbb {H} (
\mu_j \lrcorner \eta_{(i_1,\cdots,i_j -1,\cdots,i_n)} )+
\overline{\partial} f_{j,(i_1,\cdots,i_j -1,\cdots,i_n)} \rk\\
& = & \theta + \sum_{|I|\geq1} t^I \lk \sum_{j=1}^n \mathbb {H} (
\mu_j \lrcorner \eta_{(i_1,\cdots,i_j -1,\cdots,i_n)} )+
\overline{\partial} f_{j,(i_1,\cdots,i_j -1,\cdots,i_n)}
+ \partial \mathbb{G} \square_{\overline{\partial}} f_{ j,(i_1,\cdots,i_j -1,\cdots,i_n) } \rk\\
& = & \theta + \sum_{|I|\geq1} t^I \lk \sum_{j=1}^n \mathbb {H}
(\mu_j \lrcorner \eta_{(i_1,\cdots,i_j  -1,\cdots,i_n)} )+
df_{j,(i_1,\cdots,i_j -1,\cdots,i_n)} \rk.
\end{eqnarray*}
The convergence follows from Theorem \ref{etat}.
\end{proof}

\begin{rema} The iteration method to construct canonical forms on the
deformation space of Riemann Surfaces is essentially contained in
\cite{LSTY} and \cite[Theorem 2.1]{Y}. Note that \cite{LRY} contains
a generalization to K\"{a}hler manifolds while our proof emphasizes
on the uniqueness of the construction.
\end{rema}

Denote the canonical basis
 of $H^0(X_p,\Omega^1_{X_p})$ by $\{\theta^{\alpha}_{p} \}_{\alpha = 1}^g$
with respect to the symplectic basis $\{ A_{\gamma},B_{\gamma}
\}_{\gamma=1}^g$ on the Kuranishi coordinate chart
$\Delta_{p,\epsilon}$. Here a canonical basis means the unique
 basis of $H^0(X_p, \Omega^1_{X_p})$ such that its A period matrix is $\ii_g$. Let $\sigma_p$ and $\pi_p$ be the $A$
and $B$ period matrices of $\{ \theta^{\alpha}_p \}_{\alpha=1}^g$,
respectively, and $M_p=\mathrm{Im}(\pi_p)$. Applying the deformation
formula above, we get the holomorphic one-forms
$\theta_{p}^{\alpha}(t)$ on $X_t$, starting with
$\theta_{p}^{\alpha}$, given by
\begin{equation}\label{DF}
\begin{split}
\theta_{p}^{\alpha}(t) &= \theta_{p}^{\alpha} + \sum_{i=1}^n t_i
\lk\mathbb {H} (\mu_i \lrcorner \theta^{\alpha}_{p}) + df^{\alpha}_i
\rk + \sum_{|I|\geq2} t^I \lk\sum_{j=1}^n \mathbb {H} ( \mu_j
\lrcorner \eta^{\alpha}_{(i_1,\cdots,i_j -1,\cdots,i_n)} ) +
df^{\alpha}_{j,(i_1,\cdots,i_j -1,\cdots,i_n)} \rk.
\end{split}
\end{equation}

\begin{defi} Let $A(t)$ be a $g \times g$ matrix and $E(t)$ a $g \times 1$
vector given by:
\[\begin{cases}
\sum_{|I|\geq1} t^I \lk\sum_{j=1}^n \mathbb {H}
( \mu_j \lrcorner \eta^{\alpha}_{(i_1,\cdots,i_j -1,\cdots,i_n)}\rk=A(t)^{\alpha}_{\beta} \bar{\theta}_{p}^{\beta},\\
\sum_{|I|\geq1} t^I ( \sum_{j=1}^n df^{\alpha}_{ j,(i_1,\cdots,i_j
-1,\cdots,i_n) } )=E^{\alpha}(t).
\end{cases}\]
Also the homogeneous part of order $N$ of $A(t)$ is written as
$A_{N}(t)= \sum_{ |I| =N } t^{I} A_{I},$  $$\sum_{j=1}^n \lk \mathbb
{H} ( \mu_j \lrcorner \eta^{\alpha}_{(i_1,\cdots,i_j -1,\cdots,i_n)}
) \rk = A_{I},^{\alpha}_{\beta} \bar{\theta}^{\beta}_{p}.$$ In
particular, $\mathbb {H} ( \mu_i \lrcorner \theta^{\alpha}_p ) =
A_i,^{\alpha}_{\beta} \bar{\theta}^{\beta}_p$.
\end{defi}

Set
$$\Theta_p(t)=
\begin{pmatrix}
\theta^{1}_p(t) \\
 \vdots\\
 \theta^{g}_p(t)
\end{pmatrix}\ \textmd{and}\
\Theta_p= \begin{pmatrix}
 \theta^{1}_p \\
 \vdots\\
 \theta^{g}_p
\end{pmatrix}.
$$
 Thus by use of $A(t)$ and $E(t)$, we rewrite \eqref{DF} as
\begin{equation}\label{SDF}
\Theta_p(t)=
\begin{pmatrix}
 \ii_g & A(t)
\end{pmatrix}
\begin{pmatrix}
 \Theta_p \\
 \bar{\Theta}_p
\end{pmatrix}
 + E(t).
\end{equation}
Since a holomorphic one-form on Riemann Surfaces is uniquely
determined by its integration on $A$ cycles, it is clear that $\{
\theta^{\alpha}_p(t) \}_{\alpha=1}^g$ being a frame of
$H^0(X_t,\Omega^1_{X_t})$ on $X_t$, is equivalent to non-degeneration
of the $A$ period matrix $\sigma_{\alpha\beta}(t)$ on $X_t$ ,
i.e.,
\begin{equation}\label{Frame}
\begin{array}{ccc}
\det \lk \sigma_{\alpha\beta} (t) \rk = \det \lk \int_{A_{\alpha}}
\theta^{\beta}_p(t) \rk \neq 0 & \Longleftrightarrow & \det \lk
\ii_g + A(t)^{T} \rk \neq0,
\end{array}
\end{equation}
where $A(t)^{T}$ is the transpose of $A(t)$. And when $\{
\theta^{\alpha}_p (t) \}_{\alpha=1}^g$ becomes a frame, we have the
Hodge-Riemann bilinear relations on $X_t$
\[ \begin{cases}
0 = \frac{\sqrt{-1}}{2} \int_{X_t} \theta^{\alpha}_p (t) \wedge \theta^{\beta}_p (t),\\
0 < \frac{\sqrt{-1}}{2} \int_{X_t} \theta^{\alpha}_p (t) \wedge
\bar{\theta}^{\beta}_p (t),
\end{cases} \]
which and also \eqref{SDF} imply that
\[\begin{cases}
0 = \frac{\sqrt{-1}}{2} \int_{X_p} \lk \theta^{\alpha}_p +
A(t)^{\alpha}_{\gamma} \bar{\theta}^{\gamma} + E^{\alpha} (t) \rk
\wedge \lk \theta^{\beta}_p
+ A(t)^{\beta}_{\lambda} \bar{\theta}^{\lambda} + E^{\beta} (t) \rk,\\
0 < \frac{\sqrt{-1}}{2} \int_{X_p} \lk \theta^{\alpha}_p +
A(t)^{\alpha}_{\gamma} \bar{\theta}^{\gamma}_p + E^{\alpha} (t) \rk
\wedge \lk \bar{\theta}^{\beta}_p +
\overline{A(t)}^{\beta}_{\lambda} \theta^{\lambda}_p +
\bar{E}^{\beta} (t) \rk,
\end{cases} \]
and thus
\[\begin{cases}
0= M_p,_{\alpha\gamma} A(t)^{\beta}_{\gamma} - M_p,_{\beta\gamma} A(t)^{\alpha}_{\gamma}, \\
0 < M_p,_{\alpha\beta} - M_p,_{\lambda\gamma}
A(t)^{\alpha}_{\gamma} \overline{ A(t) }^{\beta}_{\lambda}.
\end{cases} \]
The matrix forms of these are given by
\begin{equation}\label{HRBR}
\begin{cases}
A(t)M_p = \lk A(t)M_p \rk^{T},\\
M_p -  A (t) M_p { \overline{A(t)} }^{T}> 0.
\end{cases}
\end{equation}
As our deformation formula is local, $\{ \theta^{\alpha}_p (t)
\}_{\alpha=1}^{g}$ is always a frame when $t \in
\Delta_{p,\epsilon}$ with $\epsilon$ sufficiently small.
Therefore, \eqref{Frame} and \eqref{HRBR} hold.

\subsection{Transition Formulas Between the Kuranishi Coordinates}
\begin{theo}\label{Transt}
Assume that the two Kuranishi coordinate charts $\Delta_{ p,
\epsilon }$ and $\Delta_{ q, \epsilon' }$ have a non-empty
intersection containing those two centers $p$ and $q$, and let $t$
and $\tau$ denote the corresponding
Kuranishi coordinates. Then $A(t)$ and $A(\tau)$ are related by the
following equality:
\begin{equation}\label{TranF}
\begin{split}
A(t)^{T} & = \begin{pmatrix}
           \bar{\pi}_p & \pi_p \\
           \ii_g  & \ii_g \\
           \end{pmatrix}^{-1}
           L_{pq}
           \begin{pmatrix}
           \bar{\pi}_q & \pi_q \\
           \ii_g & \ii_g \\
           \end{pmatrix} \curvearrowright A (\tau)^{T},
\end{split}
\end{equation}
where $L_{pq} \in \mathrm{Sp} ( g, \mathbb{Z} )$ denotes the
transition matrix between the symplectic bases of the two Kuranishi
coordinates in terms of transformations in $\mathrm{Sp} ( g,
\mathbb{Z} )$ of $\mathcal {H}_g$, and the action $\curvearrowright$
is given by
$$\begin{pmatrix} C_1 & C_2 \\ C_3 & C_4 \\ \end{pmatrix}
\curvearrowright Z = ( C_1 Z+ C_2 ) ( C_3 Z + C_4 )^{-1}.$$
\end{theo}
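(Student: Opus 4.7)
My plan is to recognize that at a common point $r \in \Delta_{p,\epsilon} \cap \Delta_{q,\epsilon'}$, the matrix $\Pi(t)$ is (up to transposition) the $B$-period matrix of the canonical basis on $X_t$ with respect to the symplectic basis carried by $\Delta_{p,\epsilon}$. Because two period matrices of the same abelian variety written in different symplectic bases differ by the Möbius action of the corresponding $\mathrm{Sp}(g,\mathbb{Z})$ matrix, this identification will immediately relate $\Pi(t)$ and $\Pi(\tau)$ at $r$, from which \eqref{TranF} follows by pure algebra.

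The first step is a direct computation from \eqref{SDF}. Since $E^{\alpha}(t)$ is exact, it integrates to zero along closed cycles, so with $\sigma_p = \ii_g$ the $A$-period matrix of $\{\theta_p^\alpha(t)\}$ equals $\ii_g + A(t)$ and its $B$-period matrix equals $\pi_p + A(t)\bar\pi_p$. Normalizing by $(\ii_g + A(t))^{-1}$ to obtain the canonical basis of $H^0(X_t,\Omega^1_{X_t})$ and then transposing (using symmetry of $\pi_p$) yields exactly $\Pi(t) = (\bar\pi_p A(t)^T + \pi_p)(A(t)^T + \ii_g)^{-1}$. The analogous calculation in $\Delta_{q,\epsilon'}$ identifies $\Pi(\tau)$ as the corresponding period expression for $Y_\tau$ relative to the symplectic basis of the $q$-chart.

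For the second step, at the common point $r$ the biholomorphism $X_{t(r)}\to Y_{\tau(r)}$ identifies $H_1$, and the diagram at the end of Section~\ref{Kc} together with the representation $\rho$ defines $L_{pq}\in\mathrm{Sp}(g,\mathbb{Z})$ as the matrix expressing one symplectic frame in terms of the other. The classical transformation law for the period matrix of a principally polarized abelian variety under symplectic change of basis then forces $\Pi(t(r)) = L_{pq}\curvearrowright \Pi(\tau(r))$, that is,
\[ \begin{pmatrix}\bar\pi_p & \pi_p \\ \ii_g & \ii_g\end{pmatrix}\curvearrowright A(t)^T = L_{pq}\curvearrowright\left[\begin{pmatrix}\bar\pi_q & \pi_q \\ \ii_g & \ii_g\end{pmatrix}\curvearrowright A(\tau)^T\right]. \]
Because $\curvearrowright$ is a group action (Möbius composition coincides with matrix multiplication) and $\begin{pmatrix}\bar\pi_p & \pi_p \\ \ii_g & \ii_g\end{pmatrix}$ is nonsingular since $\mathrm{Im}(\pi_p) > 0$, I can collapse the right-hand side into a single action and then apply the inverse action of $\begin{pmatrix}\bar\pi_p & \pi_p \\ \ii_g & \ii_g\end{pmatrix}$ on the left to isolate $A(t)^T$, producing precisely \eqref{TranF}.

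The main obstacle will be convention bookkeeping: verifying carefully that the transformation of $\Pi(t)$ under $L_{pq}$ really is the left-action $L_{pq}\curvearrowright\cdot$ rather than some transposed, inverted, or otherwise conjugated variant, and checking that at each intermediate stage of the chain of Möbius compositions the denominator $C_3 Z + C_4$ remains nonsingular so that \eqref{TranF} is well-defined throughout the overlap $\Delta_{p,\epsilon}\cap\Delta_{q,\epsilon'}$ (which follows for $\epsilon,\epsilon'$ small by continuity together with the Hodge--Riemann positivity \eqref{HRBR}).
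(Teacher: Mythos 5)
Your proposal is correct, but it runs the logic in the opposite direction from the paper, and the two routes are genuinely different. You first establish the explicit formula $\Pi(t)=\lk\pi_p+\bar{\pi}_p A(t)^{T}\rk\lk\ii_g+A(t)^{T}\rk^{-1}$ directly from \eqref{SDF} (the paper derives this same formula, as \eqref{PM}, but only \emph{after} Theorem \ref{Transt}; its derivation is independent of the theorem, so there is no circularity), then identify $\Pi(t(r))$ and $\Pi(\tau(r))$ as normalized $B$-period matrices of the one Riemann surface $X_{t(r)}\cong Y_{\tau(r)}$ with respect to the two symplectic bases carried by the charts, invoke the classical law $\Pi=(R+S\Pi')(U+V\Pi')^{-1}$ for a symplectic change of homology basis to get $\Pi(t)=L_{pq}\curvearrowright\Pi(\tau)$ with $L_{pq}=\begin{pmatrix} S & R\\ V & U\end{pmatrix}$, and finally peel off the Möbius action of $\begin{pmatrix}\bar{\pi}_p & \pi_p\\ \ii_g & \ii_g\end{pmatrix}$, which is invertible because $\mathrm{Im}\,\pi_p>0$, to recover \eqref{TranF}. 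The paper instead argues at the level of frames of cohomology classes: it introduces the comparison matrices $C$ (between $[\Theta_p(q)]$ and the canonical frame $[\Theta_q]$) and $C_r$ (between $[\Theta_p(r)]$ and $[\Theta_q(r)]$), integrates over the $A$ and $B$ cycles to obtain \eqref{CandAq} and the two identities $\ii_g=C_r\big(C+A(\tau)\overline{CA(t(q))}\big)$, $A(t)=C_r\big(CA(t(q))+A(\tau)\bar{C}\big)$, and eliminates $C$, $C_r$ by direct matrix algebra; the transition law for $\Pi$ is then deduced as a corollary. Your approach is shorter and more conceptual, outsourcing the core step to the standard transformation rule for period matrices, at the price of the convention bookkeeping you yourself flag (the $\nu$-twist relating the homology transition matrix $\begin{pmatrix} U & V\\ R & S\end{pmatrix}$ to the matrix $L_{pq}$ acting on $\mathcal{H}_g$, and the nonsingularity of the intermediate Möbius denominators, which indeed follows on a small overlap from \eqref{Frame} and \eqref{HRBR}); the paper's computation is more self-contained, exhibits the intertwining matrices explicitly, and makes manifest the observation that $L_{pq}$ and the full transition matrix in \eqref{TranF} depend only on $p$ and $q$, not on the point $r$ — a point you should note follows in your setup because the symplectic bases are carried by the whole families over the two charts, so the mapping class group element, and hence $\rho$ of it, is the same for every $r$ in the overlap.
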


Observe that the transition matrix linking $A(t)$ and $A(\tau)$
depends only on $p$ and $q$, but not the coordinates $t$ and $\tau$.

\begin{proof}
From \eqref{SDF}, it yields
\begin{equation}\label{TranFatq}
\begin{split}
\left[ \Theta_p (q) \right] &= \begin{pmatrix}\ii_g & A \lk t(q) \rk
\end{pmatrix}
 \begin{pmatrix}
\left[ \Theta_p \right]\\
\left[ \bar{\Theta} _p \right]
\end{pmatrix},
\end{split}
\end{equation}
where $\left[ \Theta_p(q) \right]$ denotes the cohomology class
represented by $\Theta_p(q)$. The frames given by the deformation
formula $ \left[ \Theta_p (q) \right] $ and the canonical one $
\left[ \Theta_q \right] $ at $q$ are different by a multiple of a
nonsingular matrix $C$:
\begin{equation}\label{TranMatrix}
\begin{split}
\left[ \Theta_q \right] & = C \left[ \Theta_p (q) \right].
\end{split}
\end{equation}
Let $\{ A_{\gamma},B_{\gamma} \}_{\gamma=1}^g$ and $\{
A'_{\gamma},B'_{\gamma} \}_{\gamma=1}^g$ be the symplectic bases on
$\Delta_{p,\epsilon}$ and $\Delta_{q,\epsilon'}$, respectively. Set
$$\begin{pmatrix} A \\ B \\ \end{pmatrix} = \begin{pmatrix} A_1 \\ \vdots \\ A_g \\ B_1 \\ \vdots \\ B_g \\
\end{pmatrix}\ \textmd{and}\ \begin{pmatrix} A' \\ B' \\
\end{pmatrix}= \begin{pmatrix} A'_1 \\ \vdots \\ A'_g
\\ B'_1 \\ \vdots \\ B'_g \\ \end{pmatrix}.$$ Denote the matrix
linking these two bases by $\begin{pmatrix} U & V \\ R & S \\
\end{pmatrix} \in \mathrm{Sp}(g,\mathbb{Z})$, i.e.,
\begin{equation}\label{SymM}
\begin{pmatrix} A \\ B \\ \end{pmatrix} =
\begin{pmatrix} U & V \\ R & S \\ \end{pmatrix} \begin{pmatrix} A' \\ B' \\ \end{pmatrix}.
\end{equation}
By \eqref{SymM}, we integrate over $A$ cycles and $B$ cycles on
\eqref{TranMatrix} to get
\[ \begin{cases}
U + V\pi_q  =  \lk\ \ii_g + A \lk t(q) \rk^{T} \rk C^{T}, \\
R + S\pi_q  =  \lk\ \pi_p + \bar{ \pi }_p A \lk t(q) \rk^{T} \rk
C^{T},
\end{cases} \]
which imply that
\begin{equation}\label{CandAq}
\begin{cases}
C^{T} = \lk\ \ii_g + A \lk  t(q) \rk^{T} \rk^{-1} \lk U + V\pi_q \rk, \\
\lk\ \pi_p + \bar{ \pi }_p A \lk t(q) \rk^{T} \rk  \lk\ \ii_g + A
\lk t(q) \rk^{T} \rk^{-1} = \lk R + S\pi_q \rk \lk U + V\pi_q
\rk^{-1}.
\end{cases}
\end{equation}
By \eqref{TranFatq} and \eqref{TranMatrix}, we have
\begin{equation}\label{TranFpq}
\begin{split}
\left[ \Theta_q \right] &          = C \begin{pmatrix} \ii_g & A \lk
t(q) \rk
\end{pmatrix}
\begin{pmatrix}
\left[ \Theta_p \right]\\
\left[ \bar{\Theta} _p \right]\\
\end{pmatrix}             \\
&          =
\begin{pmatrix}
C & C A \lk t(q) \rk\\
\end{pmatrix}
\begin{pmatrix}
\left[ \Theta_p \right]\\
\left[ \bar{\Theta} _p \right]\\
\end{pmatrix}.
\end{split}
\end{equation}

Let $r \in \Delta_{ p , \epsilon} \cap \Delta_{ q , \epsilon'}$.
Then one has
\[ \left[ \Theta_p (r) \right] =
\begin{pmatrix}
\ii_g & A(t) \\
\end{pmatrix}
\begin{pmatrix}
\left[ \Theta_p \right]\\
\left[ \bar{\Theta} _p \right]\\
\end{pmatrix};\]
while by (\ref{TranFpq}), one also has
\[ \begin{split}
\left[ \Theta_p (r) \right] &       = C_r \left[ \Theta_q (r)
\right] = C_r \begin{pmatrix}
\ii_g & A(\tau) \\
\end{pmatrix}
\begin{pmatrix}
\left[ \Theta_q \right]\\
\left[ \bar{\Theta} _q \right]\\
\end{pmatrix}                   \\
& = C_r \begin{pmatrix}
\ii_g & A(\tau) \\
\end{pmatrix}
\begin{pmatrix}
C & C A \lk t(q) \rk\\
\overline { C A \lk t(q) \rk } & \overline { C } \\
\end{pmatrix}
\begin{pmatrix}
\left[ \Theta_p \right]\\
\left[ \bar{\Theta} _p \right]\\
\end{pmatrix},
\end{split} \]
where the two frames $ \left[ \Theta_p (r) \right]$ and $\left[
\Theta_q (r) \right]$ at the point $r$ are related by a nonsingular
matrix $C_r$. These give us the following identities:
\[ \begin{cases}
\ii_g = C_r \Big( C + A(\tau) \overline { C A \lk  t(q) \rk } \Big),\\
A(t) = C_r \Big( C A \lk t(q) \rk + A(\tau) \overline{ C } \Big).
\end{cases}
\]
Combine with \eqref{CandAq} to simplify the computation as follows:
\[   \begin{split}
A(t)^{T} & = \begin{pmatrix}
           \bar{\pi}_p & \pi_p \\
           \ii_g  & \ii_g \\
           \end{pmatrix}^{-1}
           \begin{pmatrix}
           \big(\ \bar{\pi}_p + \pi_p \overline { A \big( t(q) \big) }^{T} \big) \bar{C}^{T} & \big(\ \pi_p + \bar{\pi}_p  A \big( t(q) \big)^{T} \big) C^{T} \\
           \big(\ \ii_g + \overline { A \big( t(q) ) }^{T} \big) \bar{C}^{T} & \big(\ \ii_g + A \big( t(q) \big)^{T} \big) C^{T} \\
           \end{pmatrix}\curvearrowright A (\tau)^{T}        \\
           &{ = } \begin{pmatrix}
           \bar{\pi}_p & \pi_p \\           \ii_g  & \ii_g \\
           \end{pmatrix}^{-1}
           \begin{pmatrix}
           S & R \\
           V & U \\
           \end{pmatrix}
           \begin{pmatrix}
           \bar{\pi}_q & \pi_q \\
           \ii_g & \ii_g \\
           \end{pmatrix}\curvearrowright A (\tau)^{T},
\end{split}  \]
where $\begin{pmatrix} S & R \\ V & U \\ \end{pmatrix}$ also belongs
to $\mathrm{Sp}( g,\mathbb{Z} )$, denoted by $L_{pq}$.
\end{proof}
On our Kuranishi coordinate chart $\Delta_{p,\epsilon}$, the period
map $\Pi:\, \mathcal {T}_g \rightarrow \mathcal {H}_g$ can be
written out quite explicitly:
\[ \begin{split}
\Pi(t)_{\alpha\beta} &= \int_{B_{\alpha}} \sigma(t)^{\gamma\beta} \theta^{\gamma}_p (t)\\
& = \int_{B_{\alpha}} \sigma(t)^{\gamma\beta} \lk \theta^{\gamma}_p + A(t)^{\gamma}_{\delta} \bar{\theta}_p^{\delta} \rk\\
& = \pi_p,_{\alpha\gamma} \sigma(t)^{\gamma\beta} +
\bar{\pi}_p,_{\alpha\delta} A(t)^{\gamma}_{\delta}
\sigma(t)^{\gamma\beta},
\end{split} \]
where $\sigma(t)^{\alpha\beta}$ is the inverse matrix of $\sigma(t)_{\alpha\beta}$. \eqref{Frame} gives us
\begin{gather*}
\sigma_{\alpha\beta} (t) = \int_{A_{\alpha}} \theta^{\beta}_p(t) =
\lk \ii_g + A(t)^{T} \rk_{\alpha \beta}.
\end{gather*}
Now we can formulate these into the matrix form:
\begin{equation}\label{PM}
\begin{split}
\Pi(t) &= \lk \pi_p + \bar{\pi}_p A(t)^{T} \rk \lk \ii_g + A(t)^{T} \rk^{-1}\\
& = \begin{pmatrix}
\bar{\pi}_p & \pi_p \\
\ii_g  &  \ii_g \\
\end{pmatrix}\curvearrowright A(t)^{T}.
\end{split}
\end{equation}

\begin{coro}
The period maps $\Pi(t)$ and $\Pi(\tau)$ on the intersection of the
two Kuranishi coordinate charts $\Delta_{p,\epsilon}$ and
$\Delta_{q,\epsilon'}$ have the following transition formula
\begin{gather}
\Pi(t) = L_{pq} \curvearrowright \Pi(\tau).
\end{gather}
\end{coro}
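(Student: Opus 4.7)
The plan is to combine the explicit matrix expression of the period map in \eqref{PM} with the transition formula \eqref{TranF} for $A(t)^T$, using the fact that the fractional linear action $\curvearrowright$ of $2g \times 2g$ matrices on $g\times g$ matrices is a (partial) group action: one verifies directly from the definition that
$$(M_1 M_2)\curvearrowright Z \;=\; M_1 \curvearrowright (M_2\curvearrowright Z)$$
whenever the relevant denominators are invertible. This associativity is the only algebraic ingredient we really need beyond Theorem \ref{Transt}.

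Concretely, I would start from \eqref{PM} on $\Delta_{p,\epsilon}$, namely
$$\Pi(t) \;=\; \begin{pmatrix} \bar{\pi}_p & \pi_p \\ \ii_g & \ii_g \end{pmatrix} \curvearrowright A(t)^T,$$
and substitute the right-hand side of \eqref{TranF}. Applying associativity of $\curvearrowright$ telescopes the factor $\begin{pmatrix} \bar{\pi}_p & \pi_p \\ \ii_g & \ii_g \end{pmatrix}$ against its inverse, so that
$$\Pi(t) \;=\; L_{pq}\,\begin{pmatrix} \bar{\pi}_q & \pi_q \\ \ii_g & \ii_g \end{pmatrix} \curvearrowright A(\tau)^T \;=\; L_{pq} \curvearrowright \!\left(\begin{pmatrix} \bar{\pi}_q & \pi_q \\ \ii_g & \ii_g \end{pmatrix} \curvearrowright A(\tau)^T\right),$$
and then \eqref{PM} applied on $\Delta_{q,\epsilon'}$ identifies the inner expression as $\Pi(\tau)$, giving the desired $\Pi(t) = L_{pq}\curvearrowright \Pi(\tau)$.

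The only point to be checked is that the auxiliary inverses used in the group-action identity really exist, so that the formal manipulation is legitimate. This reduces to the non-degeneracy of $\ii_g + A(\tau)^T$, which is exactly \eqref{Frame} (valid after possibly shrinking $\Delta_{q,\epsilon'}$), together with the invertibility of $\begin{pmatrix} \bar{\pi}_q & \pi_q \\ \ii_g & \ii_g \end{pmatrix}$, a standard consequence of the Hodge--Riemann relation $\operatorname{Im}\pi_q = M_q > 0$; the same remarks apply at $p$. Since everything else is pure algebra, there is no real obstacle: the corollary follows once one writes the chain of equalities cleanly and invokes the associativity of $\curvearrowright$.
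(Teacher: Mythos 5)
Your proposal is correct and follows essentially the same route as the paper: substitute the transition formula \eqref{TranF} into the explicit expression \eqref{PM} for $\Pi(t)$, use the composition property of the action $\curvearrowright$ to cancel the $p$-factor against its inverse, and identify the remaining $q$-factor acting on $A(\tau)^T$ with $\Pi(\tau)$. Your extra remarks on the invertibility of $\ii_g+A(\tau)^T$ and of the period-matrix block are a harmless (and welcome) elaboration of points the paper leaves implicit.
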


\begin{proof}
By \eqref{PM} and Theorem \ref{Transt}, we have
\[\begin{split}
\Pi(t)     & = \begin{pmatrix}
               \bar{\pi}_p & \pi_p \\
               \ii_g &  \ii_g \\
               \end{pmatrix}\curvearrowright A(t)^{T}              \\
           & = \begin{pmatrix}
           \bar{\pi}_p & \pi_p \\
           \ii_g  & \ii_g \\
           \end{pmatrix}
           \begin{pmatrix}
           \bar{\pi}_p & \pi_p \\
           \ii_g  & \ii_g \\
           \end{pmatrix}^{-1}
           L_{pq}
           \begin{pmatrix}
           \bar{\pi}_q & \pi_q \\
           \ii_g & \ii_g \\
           \end{pmatrix}
           \begin{pmatrix}
           \bar{\pi}_q & \pi_q \\
           \ii_g  &  \ii_g \\
           \end{pmatrix}^{-1}\curvearrowright \Pi (\tau)        \\
           & = L_{pq}\curvearrowright \Pi (\tau).
\end{split} \]
\end{proof}

\section{Local Torelli Theorems and Matrix Model}\label{lTT}
\begin{theo}\emph{(Local Torelli Theorem 1)}\label{lTT1}
For $g\geq3$, the period map $\Pi:\, \mathcal{ T }_g \rightarrow
\mathcal{ H }_g$ is an immersion on the non-hyperelliptic locus
$\mathcal{T}_g - \mathcal {H}\mathcal {E}\mathcal {T}_g$ and also on
the hyperelliptic locus $\mathcal {H}\mathcal {E}\mathcal {T}_g$. In
the case $g=2$, $\Pi$ is an immersion on the whole $\mathcal { T
}_g$.
\end{theo}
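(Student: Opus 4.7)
The plan is to compute $d\Pi|_p$ directly from the matrix model \eqref{PM} and reduce the injectivity to the classical surjectivity of the canonical multiplication map on $X_p$. At $t=0$ we have $A(0)=0$, and a direct derivative calculation of the fractional linear map $Z \mapsto \begin{pmatrix}\bar{\pi}_p & \pi_p \\ \ii_g & \ii_g\end{pmatrix}\curvearrowright Z$ at $Z=0$ gives $H \mapsto (\bar{\pi}_p - \pi_p)H = -2\sqrt{-1}\, M_p\, H$, since $C_4 = \ii_g$. Because $M_p = \mathrm{Im}(\pi_p)$ is positive definite by the Hodge-Riemann relations \eqref{HRBR}, this is a linear isomorphism of $g\times g$ matrices. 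Hence $d\Pi|_p$ is injective if and only if $dA|_0 : T_p\mathcal{T}_g \to \mathrm{Mat}_{g\times g}(\mathbb{C})$ is injective.

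From the definition, $dA|_0(\partial/\partial t_i) = A_i$, where $A_{i},^{\alpha}_{\beta}\,\bar{\theta}_p^{\beta} = \mathbb{H}(\mu_i \lrcorner \theta_p^{\alpha})$. This is the harmonic representative of the cup product of the Kodaira-Spencer class $[\mu_i] \in H^1(X_p, T_{X_p})$ with $[\theta_p^{\alpha}] \in H^0(X_p, K_{X_p})$, landing in $H^1(X_p, \mathcal{O}_{X_p})$. Thus the injectivity of $dA|_0$ is equivalent to the injectivity of
$$\kappa : H^1(X_p, T_{X_p}) \longrightarrow \mathrm{Hom}\bigl(H^0(X_p, K_{X_p}),\, H^1(X_p, \mathcal{O}_{X_p})\bigr).$$
By Serre duality, $H^1(T_{X_p})^\vee = H^0(K_{X_p}^{\otimes 2})$ and $H^1(\mathcal{O}_{X_p})^\vee = H^0(K_{X_p})$, and $\kappa$ is the transpose of the multiplication map
$$m : H^0(X_p, K_{X_p}) \otimes H^0(X_p, K_{X_p}) \longrightarrow H^0(X_p, K_{X_p}^{\otimes 2}).$$
So $\kappa$ is injective iff $m$ is surjective.

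For $p$ in the non-hyperelliptic locus with $g \geq 3$, the surjectivity of $m$ is the classical theorem of Max Noether (projective normality of the canonical curve). For $p$ hyperelliptic with involution $\iota$, every $\theta \in H^0(K_{X_p})$ satisfies $\iota^{\ast}\theta = -\theta$, so the image of $m$ lies in the $\iota^\ast$-invariant subspace $H^0(K_{X_p}^{\otimes 2})^+$; working with the affine model $y^2 = f(x)$ of degree $2g+2$ one sees that the $2g-1$ elements $x^i (dx)^2/f(x)$ for $i=0,\dots,2g-2$ lie in the image of $m$ and span $H^0(K_{X_p}^{\otimes 2})^+$. Meanwhile $T_p\mathcal{HET}_g$ is precisely the $\iota^{\ast}$-invariant subspace $H^1(T_{X_p})^+$ of harmonic Beltrami differentials, which under the Serre pairing is canonically $(H^0(K_{X_p}^{\otimes 2})^+)^\vee$. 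Therefore $\kappa|_{H^1(T_{X_p})^+}$ is the transpose of the surjection $m : S^2 H^0(K_{X_p}) \twoheadrightarrow H^0(K_{X_p}^{\otimes 2})^+$, and so is injective. This gives the immersion on $\mathcal{HET}_g$ for $g\geq 3$; for $g=2$ every curve is hyperelliptic, $2g-1 = 3g-3$, so $H^0(K_{X_p}^{\otimes 2})^+ = H^0(K_{X_p}^{\otimes 2})$ and the same argument covers the entire Teichm\"uller space.

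The main obstacle I expect is the clean passage between the explicit harmonic-projection description of $d\Pi|_p$ (through the matrices $A_i$) and the cohomological cup-product/Serre-duality reformulation; once this bridge is in place, the theorem reduces to Noether's theorem and its hyperelliptic variant, both classical. The identification $T_p\mathcal{HET}_g = H^1(T_{X_p})^+$ and the compatibility of the Serre pairing with the $\iota^{\ast}$-eigenspace decomposition deserve a careful, separate verification.
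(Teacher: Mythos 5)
Your proposal is correct and follows essentially the same route as the paper: linearize the matrix model $\Pi(t)=(\bar{\pi}_p A(t)^{T}+\pi_p)(A(t)^{T}+\ii_g)^{-1}$ at $t=0$ to get $-2\sqrt{-1}\,M_p A_1(t)^{T}$, identify the first-order coefficients with the cup product $H^{1}(T_{X_p})\otimes H^{0}(K_{X_p})\to H^{1}(\mathcal{O}_{X_p})$, and dualize to reduce immersivity to the surjectivity of the multiplication map $H^{0}(K_{X_p})\otimes H^{0}(K_{X_p})\to H^{0}(2K_{X_p})$, invoking Max Noether off the hyperelliptic locus and, on it, the fact that the image is the $J$-invariant part of dimension $2g-1$ paired against the $J$-invariant tangent directions (with $2g-1=3g-3$ handling $g=2$). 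Your explicit verification of the affine basis $x^{i}(dx)^{2}/f(x)$ and of the compatibility of the Serre pairing with the involution eigenspaces only makes explicit what the paper cites from the literature.
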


\begin{proof}
From \eqref{PM}, the period map can be written as $\Pi(t)=\lk
\bar{\pi}_p A(t)^{T} + \pi_p \rk \lk A(t)^{T} + \ii_g\rk^{-1}$ via
Kuranishi coordinates. By use of $A(t)$, we expand it to obtain the
first order part $\Pi^{(1)}(t)$ of $\Pi(t)$:
\begin{equation}\label{FOP}
\begin{split}
\Pi^{(1)}(t) &= \bar{\pi}_p A^{T}_{1}(t) - \pi_p A^{T}_1(t)\\
&= -2\sqrt{-1} M_p A^{T}_1(t)\\
& = -2\sqrt{-1} \sum_{i=1}^n t_i M_p,_{\alpha\gamma} A_i,^{\beta}_{\gamma}\\
& = \sum_{i=1}^n t_i \int_{X_p} \theta_p^{\alpha} \wedge \mathbb {H}
( \mu_i \lrcorner \theta_p^{\beta} ) \\
&= \sum_{i=1}^n t_i
\int_{X_p} \theta_p^{\alpha} \wedge ( \mu_i \lrcorner
\theta_p^{\beta} ).
\end{split}
\end{equation}
It is a well-known fact that the pairing $\mathbb {H}^{(0,1)}_{
\overline{\partial}} ( X_p , T^{(1,0)}_{X_p}) \times H^0(X_p ,
2K_{X_p}) \rightarrow \mathbb{C}$ is non-degenerate. The matrices
$\{ \int_{X_p} \theta_p^{\alpha} \wedge ( \mu_i \lrcorner
\theta_p^{\beta} ) \}_{i=1}^n$ are linearly dependent if and only if
there exists a nonzero vector $t = ( t_1,\cdots,t_n )$ such that the
matrix
\[ \int_{X_p} \theta^{\alpha} \wedge \left( \sum_{i=1}^n t_i \mu_i \right) \lrcorner \theta^{\beta} \equiv 0. \]
This is equivalent to that the multiplication map $H^0
(X_p,K_{X_p}) \times H^0 (X_p,K_{X_p}) \rightarrow
H^0(X_p,2K_{X_p})$ is not surjective.

A well-known theorem by Max Noether in \cite[P. 117]{ACGH}  tells us
the multiplication map $H^0 (X_p,K_{X_p}) \times H^0 (X_p,K_{X_p})
\rightarrow H^0(X_p,2K_{X_p})$ is always surjective when $X_p$ is
non-hyperelliptic. Thus the period map $\Pi$ is an immersion when
restricted to $\mathcal{T}_g - \mathcal {H}\mathcal {E}\mathcal
{T}_g$ for $g\geq3$. As to the hyperelliptic case described in
\cite[P. 104]{FK}, the image of the multiplication map is exactly
the vector space $\left( H^0 (X_p, 2K_{X_p}) \right)^{J}$, namely,
the elements in $H^0(X_p,2K_{X_p})$ invariant under the action by
the hyperelliptic involution $J$ with $\dim_{\mathbb{C}}\left( H^0
(X_p, 2K_{X_p}) \right)^{J} = 2g - 1$. Also the tangent direction of
the hyperelliptic locus can be identified with $\left( H^1(X_p,
T_{X_p}) \right)^{J}$. Hence these directions can not be degenerate
and thus $\left. \Pi \right|_{\mathcal {H}\mathcal {E}\mathcal
{T}_g}$ is still an immersion for $g\geq3$. As we know, any Riemann Surface of genus $2$
is hyperelliptic and the above multiplication map is surjective
since $2g-1 = 3g-3$ when $g=2$. Consequently, $\Pi$ is an immersion
on $\mathcal {T}_g$ for $g=2$.
\end{proof}

\begin{defi}
$T_g$, $\widetilde{T}_g$ and $\mathcal {T}or_g$.

$T_g$, called the Torelli group, is the kernel of the representation
$\rho: \Gamma_g \rightarrow \mathrm{Sp} ( g,\mathbb{Z} )$ while the
extended Torelli group $\widetilde{T}_g$ is defined to be
$\rho^{-1}( \langle -\ii_{2g}
\rangle )$ where $\langle -\ii_{2g} \rangle$ is the subgroup of
$\mathrm{Sp}(g,\mathbb{Z})$ generated by $-\ii_{2g}$. The Torelli
space $\mathcal {T}or_g$ is the quotient space of the
Teichm\"{u}ller space $\mathcal {T}_g$ by $T_g$.
\end{defi}
\begin{defi}
$\Gamma_g(n)$ and $\mathcal {M}_g^{(n)}$.

$\Gamma_g(n)$, the level $n$ subgroup of the mapping class group
$\Gamma_g$, is the kernel of the representation $\Gamma_g \stackrel
{\rho} {\rightarrow} \mathrm{Sp} ( g,\mathbb{Z} ) \stackrel {\pi}
{\rightarrow} \mathrm{Sp} ( g,\mathbb{Z}_n )$. $\mathcal
{M}_g^{(n)}$ is the moduli space of Riemann Surfaces of genus $g$
with level $n$ structure, which is defined as the quotient space of
the Teich\"{m}uller space $\mathcal {T}_g$ by the group action of
$\Gamma_g(n)$. And we identify $\Gamma_g(1)$ with $T_g$.
\end{defi}

As we know, the action of the mapping class group $\Gamma_g$ on the
Teichm\"{u}ller space $\mathcal {T}_g$ is properly discontinuous.
From the construction of the Kuranishi coordinate of $\mathcal
{T}_g$ in \cite{AC}, we know that the isotropy group $\Gamma_g^p$ of
$\Gamma_g$ at $p=[X_p,[f_p]]$ on $\mathcal {T}_g$ is
$\mathrm{Aut}(X_p)$ if we fix the injective homomorphism
\[ \begin{array}{ccc}
\mathrm{Aut}(X_p) & \rightarrow & \Gamma_g \\
h                 &  \rightarrow & [f_p h f_p^{-1}].
\end{array} \]
Moreover, we can choose $\epsilon$ and $\epsilon'$ sufficiently
small such that the points $p$ and $p'$ in different $\Gamma_g$
orbits have disjoint Kuranishi coordinates, i.e.,
$\Delta_{p,\epsilon} \cap \Delta_{p',\epsilon'}=\emptyset$, and
\[ \{ \gamma \in \Gamma_g \Big| \gamma \Delta_{p,\epsilon} \cap \Delta_{p,\epsilon} \neq \emptyset \} = \Gamma_g^p. \]

\begin{prop}
The action of $T_g$ and $\Gamma_g(n)$ with $n\geq3$ on $\mathcal {T}_g$ is fixed point free.
\end{prop}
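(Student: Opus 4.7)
The plan is to use the identification of the isotropy group $\Gamma_g^p$ with $\mathrm{Aut}(X_p)$ stated just before the proposition, and then translate the hypothesis on $\gamma \in T_g$ (resp.\ $\Gamma_g(n)$) into a condition on the action of the corresponding automorphism $h$ on $H_1(X_p,\mathbb{Z})$ (resp.\ $H_1(X_p,\mathbb{Z}_n)$). Concretely, I will suppose that some $\gamma$ in $T_g$ or $\Gamma_g(n)$ fixes a point $p=[X_p,[f_p]]\in \mathcal{T}_g$. By the embedding $\mathrm{Aut}(X_p)\hookrightarrow \Gamma_g$, $\gamma$ comes from a unique $h\in \mathrm{Aut}(X_p)$, and I will show $h=\mathrm{id}_{X_p}$.

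The first main step handles $T_g$. Here $\gamma$ lies in the kernel of $\rho$, so $h$ acts as the identity on $H_1(X_p,\mathbb{Z})$, and therefore also on $H^1(X_p,\mathbb{C})=H^0(X_p,\Omega^1_{X_p})\oplus \overline{H^0(X_p,\Omega^1_{X_p})}$. Hence $h^*$ is trivial on $H^0(X_p,\Omega^1_{X_p})$. I would then invoke the classical fact that for $g\geq 2$ the natural representation $\mathrm{Aut}(X_p)\to \mathrm{GL}\bigl(H^0(X_p,\Omega^1_{X_p})\bigr)$ is faithful: in the non-hyperelliptic case this is immediate from the fact that the canonical map is an embedding, while in the hyperelliptic case the only nontrivial automorphism in the kernel would have to be the hyperelliptic involution, but that acts as $-\mathrm{id}$ on $H^0(X_p,\Omega^1_{X_p})$ and hence as $-\mathrm{id}$, not the identity, on $H_1(X_p,\mathbb{Z})$. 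Therefore $h=\mathrm{id}$, which contradicts $\gamma$ being nontrivial.

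The second main step treats $\Gamma_g(n)$ for $n\geq 3$ via a Minkowski-type argument. Since $g\geq 2$, $\mathrm{Aut}(X_p)$ is a finite group (by Hurwitz), so $h$ has finite order $m$. The hypothesis $\gamma\in \Gamma_g(n)$ means $\rho(h)\equiv \ii_{2g}\pmod{n}$, i.e.\ $\rho(h)=\ii_{2g}+nN$ for some integer matrix $N$. I would then apply Minkowski's lemma: if $n\geq 3$ and $(\ii_{2g}+nN)^m=\ii_{2g}$ for some $m\geq 1$, then expanding by the binomial theorem and reducing modulo a suitable power of $n$ (resp.\ modulo $4$ when $n=2$) forces $N=0$, hence $\rho(h)=\ii_{2g}$. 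But then $h\in T_g\cap \mathrm{Aut}(X_p)$ and the first step gives $h=\mathrm{id}$.

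The part that needs the most care is the faithfulness of $\mathrm{Aut}(X_p)$ on $H^1(X_p,\mathbb{Z})$; once that is in place both assertions reduce to standard linear-algebra facts. It is worth remarking that the argument also shows the stronger (and well-known) statement that the extended Torelli group $\widetilde{T}_g$ acts on $\mathcal{T}_g$ with fixed-point set exactly the hyperelliptic locus, which is what motivates the $\mathbb{Z}_2$-quotient appearing in the global Torelli theorem of the introduction.
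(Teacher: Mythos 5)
Your proposal is correct and follows essentially the same route as the paper: both reduce the claim, via the identification of the isotropy group $\Gamma_g^p$ with $\mathrm{Aut}(X_p)$ stated before the proposition, to the faithfulness of the representations of $\mathrm{Aut}(X_p)$ on $H_1(X_p,\mathbb{Z})$ and on $H_1(X_p,\mathbb{Z}_n)$ for $n\geq 3$. The only difference is that the paper simply cites these faithfulness facts from Farkas--Kra, whereas you sketch their standard proofs (triviality on $H^0(X_p,\Omega^1_{X_p})$ handled through the canonical map and the hyperelliptic involution, and the Minkowski--Serre lemma combined with the finiteness of $\mathrm{Aut}(X_p)$), which is a harmless, self-contained elaboration of the same argument.
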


This proposition implies that $\mathcal{T}or_g$ and $\mathcal
{M}_g^{(n)}$ with $n\geq3$ are complex manifolds of complex
dimension $3g-3$.

\begin{proof}
We just need to show that $T_g \cap \Gamma_g^p = \{ 1 \}$ and
$\Gamma_g(n) \cap \Gamma_g^p = \{ 1 \}$. But we can identify
$\Gamma_g^p$ with $\mathrm{Aut}(X_p)$. It follows from the theory of
automorphism groups of Riemann Surfaces in \cite[Chapter V]{FK} that
the representation of $\mathrm{Aut}(X_p)$ in $H_1(X_p,\mathbb{Z})$
and $H_1(X_p,\mathbb{Z}_n)$ with $n\geq3$ are faithful, i.e., the
homomorphisms $\mathrm{Aut}(X_p) \rightarrow
\mathrm{Sp}(g,\mathbb{Z})$ and $\mathrm{Aut}(X_p) \rightarrow
\mathrm{Sp}(g,\mathbb{Z}_n)$ are injective. Now the isotropy group
$\Gamma_g^p$ embeds into $\mathrm{Sp}(g,\mathbb{Z})$ by the
representation $\rho:\, \Gamma_g \rightarrow \mathrm{Sp} (
g,\mathbb{Z} )$ if we view $\Gamma_g^p$ as $f_p^{-1}
\mathrm{Aut}(X_p) f_p$ while $T_g$ is the kernel of $\rho$. Thus
$T_g \cap \Gamma_g^p = \{ 1 \}$. Similarly, $\Gamma_g^p$ embeds into
$\mathrm{Sp}(g,\mathbb{Z}_n)$ by the representation $\Gamma_g
\stackrel {\rho} {\rightarrow} \mathrm{Sp} ( g,\mathbb{Z} )
\stackrel {\pi} {\rightarrow} \mathrm{Sp} ( g,\mathbb{Z}_n )$, and
$\Gamma_g(n)$ is the kernel of the representation $\pi\rho$. Finally
we have $\Gamma_g(n) \cap \Gamma_g^p = \{ 1 \}$.
\end{proof}

From the discussion above, we can shrink our Kuranishi coordinate
chart $\Delta_{p,\epsilon}$ on $\mathcal {T}_g$ such that $\gamma
\Delta_{p,\epsilon} \cap \Delta_{p,\epsilon} = \emptyset$ for any
$\gamma \in T_g$ and $\gamma \neq 1$. Naturally, the Kuranishi
coordinate chart $\Delta_{p,\epsilon}$ descends to $\mathcal {T}or_g
$. Let $\mathbb{Z}_2 \cong \widetilde {T}_g / T_g$ and then
$\mathcal {T}or_g$ has a natural $\mathbb{Z}_2$ action. There is a
commutative diagram
$$\xymatrix{ \mathcal{T}_g \ar[rd]^{\Pi}\ar[d]_{T_g}& \\
\mathcal{T}or_g \ar[r]_{\mathcal {J}^{tor}} & \mathcal {H}_g.}$$

\begin{lemm}\label{HY}
Let $X$ be a compact Riemann Surface with genus $g\geq2$ and $J$ an
involution on $X$, which does not fix any element of $H^0 (X,K_X)$.
Then $X$ is hyperelliptic and $J$ must be a hyperelliptic
involution.
\end{lemm}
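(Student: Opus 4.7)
The plan is to exploit the $J^{*}$-eigenspace decomposition of $H^0(X,K_X)$. Since $J$ is an involution, $H^0(X,K_X) = H^0(X,K_X)^{+} \oplus H^0(X,K_X)^{-}$, and the hypothesis that no nonzero holomorphic one-form is fixed by $J$ means precisely that $H^0(X,K_X)^{+} = 0$, i.e., $J^{*}$ acts as $-\mathrm{id}$ on the whole $g$-dimensional space of abelian differentials.

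The central step is to identify $H^0(X,K_X)^{+}$ with $H^0(Y,K_Y)$, where $Y := X/J$ is the quotient Riemann surface and $\pi : X \to Y$ is the degree-$2$ projection, which is branched exactly at the fixed points of $J$. Pullback $\pi^{*} : H^0(Y,K_Y) \to H^0(X,K_X)^{+}$ is clearly well-defined and injective. For surjectivity, given $\omega \in H^0(X,K_X)^{+}$, $\omega$ descends to a holomorphic one-form on $Y$ away from the branch points, and the extension across each branch point is verified by a local computation: in a coordinate $z$ at a fixed point of $J$ on which $J$ acts as $z \mapsto -z$, invariance of $\omega = f(z)\,dz$ forces $f$ to be odd, so $f(z) = z\,g(z^{2})$, and then $\omega = \tfrac{1}{2} g(w)\,dw$ in the coordinate $w = z^{2}$ on $Y$, which is manifestly holomorphic at the branch point.

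Combining this identification with the hypothesis yields $H^0(Y,K_Y) = 0$, hence $g(Y) = 0$ and $Y \cong \mathbb{P}^{1}$. Thus $\pi : X \to \mathbb{P}^{1}$ realizes $X$ as a double cover of $\mathbb{P}^{1}$, which is by definition hyperelliptic for $g \geq 2$. Moreover, for genus $\geq 2$ the hyperelliptic involution is characterized as the unique nontrivial deck transformation of the degree-$2$ map to $\mathbb{P}^{1}$, so $J$ must coincide with the hyperelliptic involution of $X$.

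The main delicate point is the local descent computation at the fixed points of $J$ — elementary, but one must be careful that $Y$ is endowed with its natural smooth structure as a Riemann surface (using $w = z^{2}$ as coordinate at a branch point) so that the identification $H^0(X,K_X)^{+} \cong H^0(Y,K_Y)$ is a genuine isomorphism and not merely an inclusion of sheaves on the complement of the branch locus.
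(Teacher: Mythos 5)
Your proposal is correct and follows essentially the same route as the paper: decompose $H^0(X,K_X)$ into $J^{*}$-eigenspaces, observe the hypothesis forces $J^{*}=-\mathrm{id}$, and show the quotient $X/J$ has genus zero because any nonzero holomorphic one-form on $X/J$ would pull back to a nonzero $J$-invariant form on $X$, whence $X$ is hyperelliptic and $J$ is the hyperelliptic involution. The only difference is that you additionally prove the full isomorphism $H^0(X,K_X)^{+}\cong H^0(X/J,K_{X/J})$ via the local descent computation at branch points, which is harmless but not needed — injectivity of the pullback already suffices, and that is all the paper uses.
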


\begin{proof}
Since $J^2=\ii$, the automorphism $J^{\ast}:\, H^0(X,K_X)
\rightarrow H^0(X,K_X)$ has two eigenvalue $\pm 1$. As $J^{\ast}$
fixes no element of $H^0 (X,K_X)$, $J^{\ast} = - \ii_g$ on $H^0
(X,K_X)$. Consider the quotient map $\pi: \,X \rightarrow X / J$, a
$2:1$ branched covering map, and $\pi=J \pi$. We claim that
$g(X/J)=0$. If not, there exists a nonzero holomorphic one-form
$\theta \in H^0(X/J,K_X)$. Pulling it back, we derive a nonzero
holomorphic one-form $\pi^{\ast} \theta \in H^0(X,K_X)$. But
$\pi^{\ast} \theta$ is invariant under $J^{\ast}$, which is a
contradiction. Thus $X/J$ is the Riemann sphere and $\pi$ is a
degree $2$ meromorphic function on $X$, which implies that $X$ is
hyperelliptic and $J$ is a hyperelliptic involution.
\end{proof}

\begin{prop}\label{Z2action}
$\mathbb{Z}_2$ acts freely on the non-hyperelliptic locus $\mathcal {T}or_g - \mathcal {H}\mathcal {E}\mathcal {T}or_g$
of $\mathcal{T}or_g$ and fixes every point in the hyperelliptic locus $\mathcal {H}\mathcal {E}\mathcal {T}or_g$ for $g\geq3$.
In the case $g=2$, $\mathbb{Z}_2$ acts trivially on $\mathcal{T}or_g$.
\end{prop}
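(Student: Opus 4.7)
The plan is to translate the statement about the $\mathbb{Z}_2$ action on $\mathcal{T}or_g$ into a statement about automorphisms of the fiber $X_p$, and then apply Lemma \ref{HY}. Recall that $\mathbb{Z}_2 \cong \widetilde{T}_g / T_g$; a point $[p] \in \mathcal{T}or_g$ is fixed by the nontrivial element iff there exists $\phi \in \widetilde{T}_g \setminus T_g$ (so $\rho(\phi) = -\ii_{2g}$) whose image in $\Gamma_g$ fixes $p$ in $\mathcal{T}_g$ (absorbing the possible $T_g$-correction, which does not change $\rho(\phi)$). Equivalently, $\phi \in \Gamma_g^p \cong \mathrm{Aut}(X_p)$, so $\phi$ corresponds to an automorphism $J$ of $X_p$ with $\rho([f_p J f_p^{-1}]) = -\ii_{2g}$.

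Next I would check that $J$ is an involution. Since $\rho(\phi^2) = \ii_{2g}$, we have $\phi^2 \in T_g \cap \Gamma_g^p$, which equals $\{1\}$ by the preceding proposition; hence $\phi^2 = 1$ and therefore $J^2 = \ii$. Because $J$ acts on $H_1(X_p,\mathbb{Z})$ as $-\ii_{2g}$ and the induced action on $H^1(X_p,\mathbb{C}) = H^0(X_p,K_{X_p}) \oplus \overline{H^0(X_p,K_{X_p})}$ preserves the Hodge decomposition, $J^{\ast}$ acts as $-\ii_g$ on $H^0(X_p,K_{X_p})$ and in particular fixes no nonzero holomorphic one-form. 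Lemma \ref{HY} then forces $X_p$ to be hyperelliptic and $J$ to be the hyperelliptic involution.

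For the converse direction, on every hyperelliptic $X_p$ the hyperelliptic involution $J$ exists, is canonical, and is well known to act as $-\ii_{2g}$ on $H_1(X_p,\mathbb{Z})$; hence $[f_p J f_p^{-1}] \in \widetilde{T}_g \setminus T_g$ and fixes $p$, so the nontrivial element of $\mathbb{Z}_2$ fixes $[p] \in \mathcal{T}or_g$. Combining the two directions, the fixed-point set of the $\mathbb{Z}_2$ action is exactly the hyperelliptic locus $\mathcal{H}\mathcal{E}\mathcal{T}or_g$. For $g \geq 3$ this gives a free action on the complement $\mathcal{T}or_g - \mathcal{H}\mathcal{E}\mathcal{T}or_g$ together with pointwise fixing of the hyperelliptic locus, while for $g=2$ every Riemann Surface is hyperelliptic, so $\mathcal{T}or_2 = \mathcal{H}\mathcal{E}\mathcal{T}or_2$ and the $\mathbb{Z}_2$ action is trivial.

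The main obstacle, and the only nontrivial input beyond bookkeeping with the groups $T_g$, $\widetilde{T}_g$, $\Gamma_g^p$, is the identification of $J$ as a hyperelliptic involution; this is exactly what Lemma \ref{HY} is designed for, once one has verified the two prerequisites $J^2 = \ii$ and $J^{\ast} = -\ii_g$ on $H^0(X_p,K_{X_p})$. Thus the proposition reduces essentially to an application of the preceding lemma and the faithfulness statement used in the preceding proposition.
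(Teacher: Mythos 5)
Your proof is correct and follows essentially the same route as the paper: translate a $\mathbb{Z}_2$-fixed point into an automorphism $J$ of $X_p$ acting as $-\ii_{2g}$ on $H_1(X_p,\mathbb{Z})$, deduce $J^{\ast}=-\ii_g$ on $H^0(X_p,K_{X_p})$ and that $J$ is an involution (via faithfulness, i.e.\ $T_g\cap\Gamma_g^p=\{1\}$), and conclude with Lemma \ref{HY}. Your only addition is spelling out the converse direction (that the hyperelliptic involution, acting as $-\ii_{2g}$ on homology, fixes every hyperelliptic point), which the paper leaves implicit.
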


\begin{proof}
Let $\{[\phi]\}$ be the non-unit element in $\mathbb{Z}_2$, where
$[\phi] \in \Gamma_g$ is a representative of the class $\{[\phi]\}$
and $\rho([\phi])=-\ii_{2g}$. Then that $\{[\phi]\}$ fixes a point
$\{[X_p,[f_p]]\}$ in $\mathcal {T}or_g$
 is equivalent to that there exists some element $[\psi] \in T_g$ such that $[X_p,[\phi f_p]]=[X_p,[\psi f_p]]$.
We have a commutative diagram up to isotopy
$$\xymatrix{ &\Sigma& \\
X_p\ar[ru]^{\phi f_p}\ar[rr]^h && X_p\ar[lu]_{\psi f_p} },$$ where
$h$ is an automorphism of $X_p$. Hence $h \simeq f_p^{-1} \psi^{-1}
\phi f_p$. As $\rho([\psi])=\ii_{2g}$ and $\rho([\phi])=-\ii_{2g}$,
$h_{\ast}:\, H_1(X_p,\mathbb{Z}) \rightarrow H_1(X_p,\mathbb{Z})$ is
nothing but $-\ii_{2g}$. Since a holomorphic one-form is uniquely
determined by its integration on A cycles and
$$\int_{A_{\alpha}} h^{\ast} \theta = \int_{h_{\ast}A_{\alpha}}
\theta = -\int_{A_{\alpha}} \theta,$$ $h^{\ast}:\, H^0(X_p,K_{X_p})
\rightarrow H^0(X_p,K_{X_p})$ is $-\ii_{g}$. Also the representation
of $\mathrm {Aut} ( X_p )$ to $H_1 ( X_p, \mathbb{Z} )$ is faithful
and hence $h$ is an involution. From Lemma \ref{HY}, $h$ is a
hyperelliptic involution and $X_p$ is hyperelliptic.
\end{proof}

It is easy to check that the $\mathbb{Z}_2$ orbit of $\mathcal
{T}or_g$ has the same image under $\mathcal {J}^{tor}$, since we
also have Kuranishi coordinate on $\mathcal {T}or_g$ and use
\eqref{PM}. Consequently, $\mathcal {J}^{tor}$ factors through
$\mathcal {T}or_g/\mathbb{Z}_2$:
$$\xymatrix{\mathcal{T}or_g \ar[dr]_{\mathbb{Z}_2}\ar[rr]^{\mathcal
{J}^{tor}}&& \mathcal {H}_g\\
&\mathcal{T}or_g/\mathbb{Z}_2\ar[ru]_{\mathcal {J}^{tor}}& }.$$ From
Proposition \ref{Z2action}, $\mathcal {T}or_g \rightarrow \mathcal
{T}or_g/\mathbb{Z}_2$ is a $2:1$ branched covering map branching
over the hyperelliptic locus $\mathcal {H}\mathcal {E}\mathcal
{T}or_g$ for $g\geq3$. Meanwhile, the Kuranishi coordinate chart
$\Delta_{p,\epsilon}$, $p \in \mathcal {T}or_g - \mathcal
{H}\mathcal {E}\mathcal {T}or_g$, also descends to $\mathcal
{T}or_g/\mathbb{Z}_2$. When $p \in \mathcal {H}\mathcal {E}\mathcal
{T}or_g$, we can view the Kuranishi coordinate $\Delta_{p,\epsilon}$
on $\mathcal {T}or_g$ as follows: $\Delta^{3g-3}$ decomposes into
$\Delta^{2g-1} \times \Delta^{g-2}$ where $\Delta^{2g-1}$ indicates
the direction of $T_{p} (\mathcal {H}\mathcal {E}\mathcal {T}or_g)$
and $\Delta^{g-2}$ is the normal direction in which the period map
$\mathcal {J}^{tor}_{\ast}$ vanishes. The $\mathbb{Z}_2$ action
fixes $\Delta^{2g-1}$ but acts as the multiplication of $-1$ on
$\Delta^{g-2}$. Thus $\mathcal {T}or_g / \mathbb{Z}_2$ locally looks
like $\Delta^{2g-1} \times (\Delta^{g-2}/\mathbb{Z}_2)$ around the
hyperelliptic locus.

\begin{theo}\emph{(Local Torelli Theorem 2)}\label{lTT2}
$\mathcal {J}: \, \mathcal {M}_g \rightarrow \mathcal {A}_g$ is an immersion for $g\geq2$.
\end{theo}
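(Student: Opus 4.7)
The plan is to deduce Local Torelli 2 from Local Torelli 1 by equivariant descent. The period map $\Pi$ is $\Gamma_g$-equivariant with respect to $\rho: \Gamma_g \to \mathrm{Sp}(g,\mathbb{Z})$, as is already implicit in the transition formula of Theorem \ref{Transt}: on two overlapping Kuranishi charts related by $[\phi]\in\Gamma_g$, the matrices $A(t)$ and $A(\tau)$ are linked via the M\"obius-type action of $\rho([\phi])\in\mathrm{Sp}(g,\mathbb{Z})$ on $\mathcal{H}_g$. Hence $\mathcal{J}$ is a well-defined morphism of orbifolds. At a point $p=[X_p,[f_p]]\in\mathcal{M}_g$, a local orbifold chart is $\Delta_{p,\epsilon}/\mathrm{Aut}(X_p)$ with $\mathrm{Aut}(X_p)$ finite for $g\geq 2$, so the immersion property of $\mathcal{J}$ at $p$ reduces to checking injectivity of $d\Pi_p$ modulo this finite action on tangent spaces.

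If $X_p$ is non-hyperelliptic, or if $g=2$ (in which case the hyperelliptic locus is all of $\mathcal{T}_g$), Theorem \ref{lTT1} gives injectivity of $d\Pi_p$ outright. The delicate case is a hyperelliptic $p$ with $g\geq 3$. Here, from the proof of Theorem \ref{lTT1}, $\ker d\Pi_p$ is exactly the normal direction $\Delta^{g-2}$ to the hyperelliptic locus in $\Delta_{p,\epsilon}$. At the same time the hyperelliptic involution $J\in\mathrm{Aut}(X_p)$ acts as $+1$ on the hyperelliptic tangent direction $\Delta^{2g-1}$ and as $-1$ on $\Delta^{g-2}$, while $\rho(J)=-I_{2g}$ acts trivially on $\mathcal{H}_g$. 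Thus $\Pi$ factors through $\Delta_{p,\epsilon}/\langle J\rangle\cong \Delta^{2g-1}\times(\Delta^{g-2}/\mathbb{Z}_2)$: Theorem \ref{lTT1} gives the immersion on the first factor, and to finish one expands $\Pi$ to second order on the normal factor using the quadratic piece $A_2(t)$ of $A(t)$ from \eqref{SDF}, showing that the induced map on the orbifold tangent at the origin of $\Delta^{g-2}/\mathbb{Z}_2$ is injective.

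The main obstacle will be this last step --- non-degeneracy of the second-order expansion of $\Pi$ along the hyperelliptic normal direction --- which should reduce, via the recursion \eqref{SPBE} and a Serre-duality pairing analogous to the first-order calculation in \eqref{FOP}, to a surjectivity statement for the $J$-antisymmetric part of a higher multiplication map on pluricanonical sections of $X_p$, playing the role of Max Noether's theorem at second order.
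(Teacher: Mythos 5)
Your overall strategy is the same as the paper's: reduce to a hyperelliptic point $p$ with $g\geq 3$, pass to the quotient of $\Delta_{p,\epsilon}$ by the hyperelliptic involution (equivalently to $\mathcal{T}or_g/\mathbb{Z}_2$), observe that near $\tilde p$ the quotient looks like $\Delta^{2g-1}\times(\Delta^{g-2}/\mathbb{Z}_2)$, and detect the immersion by combining the first-order derivative of $\Pi$ in the $2g-1$ tangential directions with the second-order data $A_2(t)$ in the normal directions. But the proposal stops exactly where the real content of Local Torelli 2 begins. The decisive step is to prove that the images of all $\tfrac{g(g+1)}{2}$ Zariski tangent directions of the quotient singularity --- in the paper's notation $\mathcal{J}^{tor}_{\ast}(D_k)=\int_{X_p}\theta_p^{\alpha}\wedge\mathbb{H}(\mu_k\lrcorner\theta_p^{\beta})$ for $1\leq k\leq 2g-1$ together with $\mathcal{J}^{tor}_{\ast}(D_{ij})=\int_{X_p}\theta_p^{\alpha}\wedge\mathbb{H}\big(\mu_i\lrcorner\partial\mathbb{G}\overline{\partial}^{\ast}(\mu_j\lrcorner\theta^{\beta})+\mu_j\lrcorner\partial\mathbb{G}\overline{\partial}^{\ast}(\mu_i\lrcorner\theta^{\beta})\big)$ for $2g\leq i\leq j\leq 3g-3$ --- are jointly linearly independent. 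You only gesture at this: you say it ``should reduce to a surjectivity statement for the $J$-antisymmetric part of a higher multiplication map on pluricanonical sections,'' but you neither formulate that statement precisely nor prove it, and there is no analogue of Max Noether's theorem that can simply be quoted at second order. In the paper this is precisely the point where the explicit second-order expansion of $\Pi$ is computed and the independence is then imported from the \v{C}ech-cohomological computations of Oort--Steenbrink and Karpishpan on an explicit hyperelliptic model, with the Appendix devoted to identifying the Dolbeault classes above with their \v{C}ech classes. Your proposal supplies no substitute for that computation, so the proof is incomplete at its central step.

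Two further, smaller defects in the structure. First, your opening reduction (``the immersion property of $\mathcal{J}$ at $p$ reduces to checking injectivity of $d\Pi_p$ modulo this finite action on tangent spaces'') is false at hyperelliptic points, where $d\Pi_p$ has a $(g-2)$-dimensional kernel; the correct notion is injectivity on the Zariski tangent space of the quotient, which necessarily involves second-order data. You repair this in the next paragraph, but the initial framing should be discarded. Second, verifying injectivity ``factor by factor'' --- Theorem \ref{lTT1} on $\Delta^{2g-1}$ and a separate non-degeneracy on $\Delta^{g-2}/\mathbb{Z}_2$ --- is weaker than what is needed: since $\dim T_{\tilde p}=\tfrac{g(g+1)}{2}=\dim\mathcal{H}_g$, you must exclude a nontrivial linear relation mixing the first-order tangential images with the second-order normal ones, i.e., prove independence of the full set $\{\mathcal{J}^{tor}_{\ast}(D_k),\mathcal{J}^{tor}_{\ast}(D_{ij})\}$ simultaneously, which is exactly the Oort--Steenbrink/Karpishpan statement the paper invokes.
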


This local Torelli Theorem was first proved by F. Oort and J.
Steenbrink \cite{OS} and then by Y. Karpishpan \cite{Kar} under his
framework of understanding higher order derivatives of period map in
terms of \v{C}ech cohomology. We approach it by our deformation
method.

\begin{proof}
From the local Torelli Theorem \ref{lTT1}, the tangent map $\Pi_{\ast}$,
restricted to $\mathcal {T}_g - \mathcal {H}\mathcal {E}\mathcal
{T}_g$, is injective for $g\geq3$ and everywhere injective for
$g=2$. Thus it suffices to show that the tangent map of $\mathcal
{J}: \, \mathcal {M}_g \rightarrow \mathcal {A}_g$ at hyperelliptic
locus $\mathcal {H}\mathcal {E}_g$ is injective for $g\geq3$. To
this end, we lift the period map to $\mathcal {J}^{tor}: \mathcal
{T}or_g/ \mathbb{Z}_2 \rightarrow \mathcal {H}_g$. Fix $p \in
\mathcal {H}\mathcal {E}\mathcal {T}or_g$ which descends to
$\tilde{p}$ in $\mathcal {T}or_g / \mathbb {Z}_2$. From Proposition
3.7, $\tilde{p}$ is a double point. Moreover, the dimension of the
Zariski tangent space at $\tilde{p}$ is $\frac{g(g+1)}{2}$. In fact,
as $\mathcal {T}or_g$ is a complex manifold of complex dimension
$3g-3$ and $p$ is a smooth point, we can choose local parameters
$(t_1,t_2,\cdots,t_{3g-3})$ such that $\widehat{ \mathcal {O} }_{
\mathcal {T}or_g , p } = \mathbb{C}[[t_1,t_2,\cdots,t_{3g-3}]]$ and
$\mathbb{Z}_2$ action is given by
\[ \begin{cases}
\mathbb{Z}_2^{\ast} t_i =t_i, & 1 \leq i \leq 2g-1, \\
\mathbb{Z}_2^{\ast} t_i =-t_i, & 2g \leq i \leq 3g-3, \\
\end{cases}  \]
where $\{ t_i \}_{i=1}^{2g-1}$ indicates the tangent directions of
$\mathcal {H}\mathcal {E}\mathcal {T}or_g$ and  $\{ t_i
\}_{i=2g}^{3g-3}$ is the normal directions in which $\mathcal
{J}^{tor}_{\ast}$ vanishes. Clearly,
\[\widehat{ \mathcal {O} }_{ \mathcal {T}or_g/\mathbb{Z}_2 , \tilde{p} }
= \left( \widehat{ \mathcal {O} }_{ \mathcal {T}or_g , p }
\right)^{\mathbb{Z}_2} = \mathbb{C} [[
t_1,\cdots,t_{2g-1},t_{2g}^2,t_{2g}t_{2g+1},\cdots,t_{3g-3}^{2} ]].
\] It is exactly the $\frac{g(g+1)}{2}$ parameters that give the
basis of the Zariski tangent space at $\tilde{p}$. We denote these
directions by $\{ D_{k} , D_{ij} \}_{1 \leq k \leq 2g-1 , 2g \leq i
\leq j \leq 3g-3 }$, respectively. Also by \eqref{PM}, we know that
$\mathcal {J}^{tor}$ can also be written as $\lk \bar{\pi}_p
A(t)^{T} + \pi_p \rk \lk A(t)^{T} + \ii_g\rk^{-1}$. The first and
second order parts of $\mathcal {J}^{tor}$ are given by
\[
\begin{split}
&\left( \mathcal {J}^{tor} \right)^{(1)} + \left( \mathcal {J}^{tor}
\right)^{(2)}\\
=& -2\sqrt{-1} M_p A_1(t)^{T} + 2\sqrt{-1} M_p \lk A_1(t)^{T} \rk^2 - 2\sqrt{-1} M_p A_2(t)^{T} \\
=& \sum_{i=1}^{n} t_i \int_{X_p} \theta_p^{\alpha} \wedge \mathbb{H}
( \mu_i \lrcorner \theta_p^{\beta} )
- \sum_{i,j=1}^{n} \frac{\sqrt{-1}}{2} t_i t_j \int_{X_p} \theta_p^{\alpha} \wedge \mathbb{H}
( \mu_i \lrcorner \theta_p^{\delta} ) M_p^{\delta\gamma} \int_{X_p} \theta_p^{\gamma} \wedge \mathbb{H} ( \mu_j \lrcorner \theta_p^{\beta})\\
& + \sum_{i,j=1}^{n} t_i t_j \int_{X_p} \theta^{\alpha}_p \wedge
\mathbb{H} ( \mu_i \lrcorner \eta^{\beta}_j ),
\end{split}
\]
where $\eta^{\alpha}_i = - \mathbb{G} \overline{\partial}^{\ast}
\partial ( \mu_i \lrcorner \theta_p^{\alpha})$ and
$M_p^{\alpha\beta}$ is the inverse matrix of $M_{p,\alpha\beta}$.
From the choice of $t_i$ above, for any $1\leq \alpha,\beta \leq g$, we have
\[ \int_{X_p} \theta_p^{\alpha} \wedge \mathbb{H} ( \mu_i \lrcorner \theta_p^{\beta} ) = 0, \]
where $2g \leq i \leq 3g-3$. Hence we can write out the image of $\{
D_{k} , D_{ij} \}_{1 \leq k \leq 2g-1 , 2g \leq i \leq j \leq 3g-3
}$ under $\mathcal {J}^{tor}_{\ast}$ by using the expansion formula
of $\mathcal {J}^{tor}$:
\[\begin{cases}
\mathcal {J}^{tor}_{\ast} ( D_k )= \int_{X_p} \theta_p^{\alpha} \wedge \mathbb {H} (\mu_k \lrcorner \theta_p^{\beta} ),     &    1 \leq k \leq 2g-1, \\
\mathcal {J}^{tor}_{\ast} ( D_{ij} )= \int_{X_p} \theta_p^{\alpha} \wedge \mathbb {H} \lk \mu_i \lrcorner \partial \mathbb{G} \overline{\partial}^{\ast}
(\mu_j \lrcorner \theta^{\beta}) \rk,     &    2g \leq i = j \leq 3g-3, \\
\mathcal {J}^{tor}_{\ast} ( D_{ij} )= \int_{X_p} \theta_p^{\alpha}
\wedge \mathbb {H} \lk \mu_i \lrcorner \partial \mathbb{G}
\overline{\partial}^{\ast} (\mu_j \lrcorner \theta^{\beta})  + \mu_j
\lrcorner \partial \mathbb{G} \overline{\partial}^{\ast} (\mu_i
\lrcorner \theta^{\beta}) \rk, &    2g \leq i < j \leq 3g-3.
\end{cases} \]
Finally we need to show that $\{ \mathcal {J}^{tor}_{\ast} ( D_k ) ,
\mathcal {J}^{tor}_{\ast} ( D_{ij} ) \}$ are linearly independent.
Since $X_p$ is a hyperelliptic Riemann Surface, these \v{C}ech
cohomology groups, such as $H^0 (\Omega^1_{X_p})$, $\check{H}^1
(\mathcal {O}_{X_p})$ and $\check{H}^1 (T_{X_p})$, have explicit
bases just as described in \cite{Kar,OS}. Moreover, these papers
have showed that these directions are linearly independent in terms
of \v{C}ech cohomology. We give a proof in Appendix \ref{app} that
our directions are actually the same as theirs, which completes the
proof of this theorem.
\end{proof}

Local Torelli Theorems \ref{lTT1} and \ref{lTT2} tell us that the
period map gives a local embedding of the Kuranishi coordinate chart
$\Delta_{p,\epsilon}$ when $p$ lies in the nonhyperelliptic locus,
and of $\Delta_{p,\epsilon} / \mathbb{Z}_2$ when $p$ lies in the
hyperelliptic locus. This local embedding induces a matrix model for
the local Kuranishi coordinates.

\begin{defi}
Matrix Model for the Kuranishi coordinate charts.

The image of the Kuranishi coordinate chart under the period map is
called the matrix model when the local Torelli theorems hold. Here
we identify the Kuranishi coordinate chart with its matrix model,
which lies in $\mathcal {H}_g \subset \mathbb {C}^{ \frac{g(g+1)}{2}
}$.
\end{defi}

\section{Proof of the global Torelli Theorems}\label{gTT}
This section is devoted to the proof of the global Torelli theorem for
Riemann Surfaces.

We recall some basic facts of Riemann Surface $S$ and its Jacobian $\mathrm{Jac}S$.
Fix one point $p_0$ on $S$. There is a natural map
\[ \begin{array}{cccc}
\chi_{p_0}: & S^{(d)} & \longrightarrow & \mathrm{Jac}S \\
& D = p_1 + \cdots + p_d & \longrightarrow &
\left[ \left( \sum_{i=1}^d \int_{p_0}^{p_i} \theta^1, \cdots, \sum_{i=1}^d \int_{p_0}^{p_i} \theta^{g} \right) \right], \\
\end{array} \]
where $S^{(d)} = \underbrace{S \times \cdots \times S}_{d\ \text{times}} / S_d$ with $S_d$ the symmetric group of order $d$
and $\{ \theta^{\alpha} \}_{\alpha =1}^g$ is the basis of $H^0(S, K_S)$.
\begin{defi}\label{IAM}
Define $W^r_d$ in $\mathrm{Jac}S$ associated to $S$:
\[ W^r_d := \left\{ \chi_{p_0} (D) \in \mathrm{Jac}S  \big| D \geq 0,\ \deg D = d\ \textrm{and}\ \dim |D| \geq r \right\} \]
and set $W_d$ as $W^0_d$.
\end{defi}

The polarization class determines the line bundles associated to it
up to translations on abelian varieties. Theta functions are holomorphic sections of those line bundles.
On principally polarized abelian varieties $(A,[\omega])$, $\dim H^0(A,L)=1$ where $L$ is associated to
the polarization class $[\omega]$. Theta divisor $\Theta$ is the zero locus of the generator section of $L$.
Thus the polarization class $[\omega]$ determines theta divisor up to translations.
Moreover, we have the following famous theorem on Jacobians.
\begin{theo}{ \emph{(Riemann's Theorem. See P. 338 of \cite{GH})} }\label{RT}
$\Theta = W_{g-1} - \frac{K}{2}$, where $K$ is the image of canonical divisor under $\chi_{p_0}$.
\end{theo}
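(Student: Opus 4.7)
The plan is to prove $\Theta=W_{g-1}-K/2$ as an equality of effective divisors on $\mathrm{Jac}\,S$ by combining a cohomological argument with two symmetry constraints.

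First I would verify that $W_{g-1}$ is an irreducible effective divisor in the class of the principal polarization. By Riemann--Roch a generic effective divisor $D$ of degree $g-1$ satisfies $h^{0}(D)=1$, so the Abel--Jacobi map $\chi_{p_{0}}:S^{(g-1)}\to W_{g-1}$ is birational onto its image and $\dim W_{g-1}=g-1$. A standard application of the Poincar\'e formula $[W_{d}]=[\Theta]^{g-d}/(g-d)!$ then gives $[W_{g-1}]=[\Theta]$ in $H^{2}(\mathrm{Jac}\,S)$.

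Since $(\mathrm{Jac}\,S,[\Theta])$ is a principally polarized abelian variety we have $h^{0}(\mathcal{O}(\Theta))=1$, so every effective divisor in the class $[\Theta]$ is a translate of $\Theta$. Applied to $W_{g-1}$ this yields $W_{g-1}=\Theta+\kappa$ for a unique $\kappa\in \mathrm{Jac}\,S$. To locate $\kappa$ I would exploit two symmetries. The Riemann theta function is even, so $-\Theta=\Theta$ as subsets of $\mathrm{Jac}\,S$. Meanwhile, for an effective divisor $D$ of degree $g-1$ with $h^{0}(D)\geq 1$, Serre duality combined with Riemann--Roch gives $h^{0}(K_{S}-D)=h^{0}(D)\geq 1$, so $K_{S}-D$ is also represented by an effective divisor of degree $g-1$; pushing this involution through $\chi_{p_{0}}$ yields $K-W_{g-1}=W_{g-1}$. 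Using $-\Theta=\Theta$ and $W_{g-1}=\Theta+\kappa$ one computes $K-W_{g-1}=K+\Theta-\kappa$, and comparing with $W_{g-1}=\Theta+\kappa$ forces $2\kappa=K$, i.e.\ $\kappa=K/2$ modulo $2$-torsion in $\mathrm{Jac}\,S$.

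The main obstacle is to remove the residual $2$-torsion ambiguity and pin down $\kappa=K/2$ exactly. To this end I would invoke Riemann's vanishing theorem: for a generic effective divisor $D$ of degree $g$, the multivalued function $p\mapsto \theta\bigl(\chi_{p_{0}}(D-p)-K/2\bigr)$ has zero divisor equal to $D$, which forces $\theta$ to vanish exactly on the translate $W_{g-1}-K/2$. Combined with the uniqueness of the section of $\mathcal{O}(\Theta)$, this delivers $\Theta=W_{g-1}-K/2$. The Poincar\'e formula and the uniqueness of the effective divisor in the polarization class are classical; the delicate point is the multiplier-and-residue computation underlying Riemann's explicit vanishing theorem, which is the only nontrivial input specific to curves.
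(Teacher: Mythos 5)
The paper offers no proof of this statement at all: it is quoted verbatim from Griffiths--Harris (p.\ 338), where it is proved by the classical theta-function argument --- restrict $\theta(\chi_{p_0}(p)-\lambda)$ to the curve, use the quasi-periodicity of $\theta$ to show this section has exactly $g$ zeros and to compute their sum, and conclude $\Theta=W_{g-1}+\kappa$ for a constant $\kappa$ independent of $\lambda$. Your route is genuinely different and its core is sound: the Poincar\'e formula (which does admit a purely homological proof independent of Riemann's theorem, so there is no circularity at that point) gives $[W_{g-1}]=[\Theta]$; since $h^{0}(\mathcal{O}(\Theta))=1$ and, for a principal polarization, the effective divisors algebraically equivalent to $\Theta$ are exactly its translates while no nonzero translation preserves $\Theta$ (you use this tacitly when you ``compare'' and cancel --- it deserves a sentence), you get $W_{g-1}=\Theta+\kappa$ with $\kappa$ unique; evenness of $\theta$ plus the residuation symmetry $K-W_{g-1}=W_{g-1}$ coming from Riemann--Roch/Serre duality then force $2\kappa=K$. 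This is in fact the standard way one passes from the form of the theorem proved in Griffiths--Harris to the ``$-K/2$'' form stated here, and it buys a proof with no theta-multiplier computation, at the price of fixing $\kappa$ only up to $2$-torsion --- which is all the literal statement can mean anyway, since ``$K/2$'' denotes one of $2^{2g}$ half-periods, and the paper only ever uses that the polarization determines $W_{g-1}$ up to translation. The one genuine defect is your closing paragraph: invoking ``Riemann's explicit vanishing theorem'' with the constant $-K/2$ already built in to remove the $2$-torsion ambiguity is circular, because that refined statement (the identification of the Riemann constant) is precisely what the multiplier-and-residue computation of the classical proof establishes; either stop at the up-to-$2$-torsion conclusion (sufficient for every use made of Theorem \ref{RT} in this paper, provided the normalization of $K/2$ is kept consistent with Theorem \ref{InterT1}, both being imported from the same source) or actually carry out that computation rather than cite it.
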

Riemann's Theorem tells us
that the polarization class on $\mathrm{Jac}S$ determines $W_{g-1}$, which reflects the complex structure on $S$
to some extent, up to translations.

Also the intersection number $^{\#}\!( W_1 \cdot \Theta) = g$ and we have another theorem.
\begin{theo}{ \emph{(See P. 336 --- P. 339 of \cite{GH})} }\label{InterT1}
For $\lambda \in \mathrm{Jac}S$ such that $W_1 \nsubseteq \{ \Theta + \lambda \}$,
$W_1$ and $\Theta + \lambda$ have $g$ intersection points,
denoted by $p_1(\lambda), \cdots, p_g(\lambda)$. And the equality
\[ \sum_{i=1}^g p_i(\lambda) = \lambda + \frac{K}{2} \]
holds. Moreover, $W_1 \subseteq \{ \Theta + \lambda \} $ if and only if $\lambda + \frac{K}{2} \in W^1_g$.
Equivalently, when $W_1 \nsubseteq \{ W_{g-1} + b \}$, there are $p_1(b), \cdots, p_{g}(b)$
in $W_1 \bigcap \{ W_{g-1} + b \}$ and $\sum_{i=1}^g p_i(b) = b + K$.
Meanwhile, $W_1 \subseteq \{ W_{g-1} + b \}$ if only if $b + K \in W^1_g$.
\end{theo}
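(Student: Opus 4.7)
The plan is to follow the classical theta-function approach, as in Griffiths--Harris. The key object is the multi-valued holomorphic function $f(p) := \theta(\chi_{p_0}(p) - \lambda)$ on $S$, where $\theta$ is the Riemann theta function associated to the principally polarized $(\mathrm{Jac}S, \Theta)$. Although $f$ is only determined up to the standard theta-multipliers as $p$ winds around cycles, its zero set is a well-defined effective divisor on $S$ whose support is exactly $W_1 \cap (\Theta + \lambda)$ and whose local multiplicities coincide with the intersection multiplicities. The hypothesis $W_1 \nsubseteq \Theta + \lambda$ is precisely $f \not\equiv 0$, so the intersection is a genuine effective divisor.

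First I would realize $S$ as a fundamental $4g$-gon $\Delta$ cut along the symplectic basis $\{A_\gamma, B_\gamma\}_{\gamma=1}^g$, so that $\partial \Delta$ traces $\prod_\gamma A_\gamma B_\gamma A_\gamma^{-1} B_\gamma^{-1}$. Applying the argument principle $\frac{1}{2\pi\sqrt{-1}} \oint_{\partial \Delta} d \log f$ and using the quasi-periodicity of $\theta$ under the period lattice, the contributions from each $A_\gamma$ vs.\ $A_\gamma^{-1}$ pair cancel, while each $B_\gamma$ vs.\ $B_\gamma^{-1}$ pair contributes exactly $1$. This yields $g$ zeros counted with multiplicity, which I label $p_1(\lambda), \ldots, p_g(\lambda)$.

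Next I would compute the image of the sum in $\mathrm{Jac}S$ by evaluating $\frac{1}{2\pi\sqrt{-1}} \oint_{\partial \Delta} \chi_{p_0}(p)\, d\log f(p)$ coordinate-wise. The contributions split into two parts: the $A_\gamma$-side pairings, after using the non-trivial automorphy factor of $f$ across $B_\gamma$, telescope into the vector $\lambda$ modulo the period lattice; and the $B_\gamma$-side pairings contribute a universal constant depending only on the symplectic basis and the chosen base point $p_0$. Identifying this constant with the vector $K/2$ of Riemann constants gives $\sum_{i=1}^g \chi_{p_0}(p_i(\lambda)) = \lambda + \frac{K}{2}$ in $\mathrm{Jac}S$. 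The delicate step is the identification of that boundary constant with $K/2$; it requires careful tracking of normalizations of $\theta$ and of base paths, and is the principal technical obstacle in the whole argument.

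Finally, for the containment criterion, $W_1 \subseteq \Theta + \lambda$ is equivalent to $f \equiv 0$ on $S$. By Theorem \ref{RT} this reads $W_1 \subseteq W_{g-1} + (\lambda - K/2)$, and by the geometric interpretation of $W_{g-1}$ together with Abel's theorem, the inclusion holds iff for every $p \in S$ there exists an effective divisor $D$ of degree $g-1$ with $p + D$ linearly equivalent to a fixed divisor $E$ of degree $g$ satisfying $\chi_{p_0}(E) = \lambda + K/2$. Equivalently, the linear system $|E|$ moves through every point of $S$, which is precisely $\dim|E| \geq 1$, i.e., $\lambda + \frac{K}{2} \in W^1_g$. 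The reformulation in terms of $b = \lambda - K/2$ (so that $\Theta + \lambda = W_{g-1} + b$ and $b + K = \lambda + K/2$) is then immediate from Riemann's theorem.
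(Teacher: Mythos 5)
The paper itself offers no proof of this theorem: it is imported verbatim from \cite{GH}, pp.~336--339, and your sketch is precisely the argument given there --- the argument principle for $f(p)=\theta(\chi_{p_0}(p)-\lambda)$ on the cut $4g$-gon, then the boundary integral of $\chi_{p_0}\,d\log f$, then the vanishing criterion reduced to $\dim|E|\geq 1$. So you are reconstructing the cited classical proof rather than taking a different route, and the outline is sound.

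Three points on the details. (i) Your side-pair bookkeeping is reversed: with the $A$-normalized basis (A-period matrix $\ii_g$), $\theta$ is invariant under the integral lattice vectors and acquires its nontrivial multiplier under translation by the $B$-period columns, and identified points on the $A_\gamma$ and $A_\gamma^{-1}$ sides of the polygon differ in $\chi_{p_0}$ exactly by a $B_\gamma$-period (to pass from one copy of the point to the other one crosses $B_\gamma$). Hence it is the $A_\gamma/A_\gamma^{-1}$ pairs that each contribute $1=\int_{A_\gamma}\theta^{\gamma}_p$ while the $B_\gamma/B_\gamma^{-1}$ pairs cancel, and in the second integral the $\lambda$-term likewise comes from the $A$-side pairs. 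The total count $g$ and the structure of the argument survive, but as written the computation does not literally close. (ii) The step you defer --- identifying the boundary constant with $K/2$ --- is the lemma on the vector of Riemann constants ($2\kappa=K$), which is indeed the hardest part if done from scratch; within this paper's logic you can avoid reproving it, since the same boundary computation is what yields Theorem \ref{RT}, so the constant is by definition the translate denoted $K/2$ there. (iii) In the final equivalence, passing from ``for every $p$ there is an effective $D$ of degree $g-1$ with $\chi_{p_0}(p)-\chi_{p_0}(D)=b$'' to ``$p+D'$ is linearly equivalent to a fixed $E$ with $\chi_{p_0}(E)=b+K$'' silently uses the residuation $W_{g-1}=K-W_{g-1}$ (equivalently the evenness of $\theta$); it is a one-line Riemann--Roch argument, but it should be stated. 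With these repairs your proposal coincides with the standard proof that the paper is quoting.
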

Define an operation of two sets $A, B$ in $\mathrm{Jac}S$:
\[ A \ominus B := \bigcap_{ x \in B} \{ A - x \}. \]
\begin{prop}\label{minus1}
For $0 \leq r \leq t \leq g-1$ and $a,b \in \mathrm{Jac}S$, $\{ W_t + a \} \ominus \{ W_r + b\} = W_{t-r} + a - b $.
\end{prop}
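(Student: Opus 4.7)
The plan is to first reduce the identity to the translate-free case $a = b = 0$, using the evident equivariance $\{W_t + a\} \ominus \{W_r + b\} = (W_t \ominus W_r) + (a - b)$, and then establish $W_t \ominus W_r = W_{t-r}$ by double inclusion. The forward inclusion $W_{t-r} \subseteq W_t \ominus W_r$ is immediate from the definitions: if $z = \chi_{p_0}(F)$ with $F \geq 0$ of degree $t - r$, then for every $x = \chi_{p_0}(D) \in W_r$ we have $z + x = \chi_{p_0}(F + D) \in W_t$.

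For the reverse inclusion I would translate the problem into a statement about line bundles via Abel--Jacobi. Writing $y \in W_t \ominus W_r \subseteq \mathrm{Jac}S = \mathrm{Pic}^0(S)$ as $[N]$ with $\deg N = 0$ and setting $M := N \otimes \mathcal{O}((t-r)p_0)$, a direct computation shows that the hypothesis $y + W_r \subseteq W_t$ is equivalent to $h^0(M(D)) \geq 1$ for every $D \in S^{(r)}$, while the target $y \in W_{t-r}$ is equivalent to $h^0(M) \geq 1$.

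The main obstacle will be the base case $r = 1$: if $M$ is a line bundle on $S$ of degree $d$ with $0 \leq d \leq g - 2$ and $h^0(M(p)) \geq 1$ for every $p \in S$, then $h^0(M) \geq 1$. I would argue by contradiction. Suppose $h^0(M) = 0$; Riemann--Roch then gives $h^0(K_S - M) = g - 1 - d \geq 1$, so $|K_S - M|$ contains a nonzero section whose zero divisor is a finite subset of $S$. Choosing $p$ outside this zero divisor makes the evaluation map $H^0(K_S - M) \to (K_S - M)|_p$ nonzero, and Serre duality identifies this evaluation as the dual of the coboundary $\delta\colon M(p)|_p \to H^1(M)$ appearing in the long exact sequence of $0 \to M \to M(p) \to M(p)|_p \to 0$. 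Hence $\delta \neq 0$, which forces $h^0(M(p)) = h^0(M) = 0$ and contradicts the hypothesis.

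The general case $r \geq 2$ then follows by induction on $r$. Given $y \in W_t \ominus W_r$ and any $D_0 \in S^{(r-1)}$, the hypothesis applied to divisors of the form $D_0 + p \in S^{(r)}$ ($p \in S$) yields $y + \chi_{p_0}(D_0) + W_1 \subseteq W_t$, so the base case gives $y + \chi_{p_0}(D_0) \in W_t \ominus W_1 = W_{t-1}$. Since $D_0$ was arbitrary, $y \in W_{t-1} \ominus W_{r-1}$, which equals $W_{t-r}$ by the inductive hypothesis; the required degree bound $\deg M \leq g - 2$ for the base case is ensured by $r \geq 1$ together with $t \leq g - 1$.
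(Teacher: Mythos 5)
Your proposal is correct, and it is worth noting that the paper itself offers no argument for Proposition \ref{minus1}: it only refers the reader to the relevant pages of Farkas--Kra \cite{FK}, so your write-up supplies a proof where the paper has none. Your route is the classical divisor-theoretic one, in the same spirit as the cited source but phrased cohomologically: the equivariance reduction to $a=b=0$ and the forward inclusion are immediate; the substance is the translation, via Abel's theorem, of $y+W_r\subseteq W_t$ into $h^0(M(D))\geq 1$ for all effective $D$ of degree $r$ with $M$ of degree $t-r$, the base case $r=1$, and the induction on $r$. The base case is sound: if $h^0(M)=0$ and $\deg M\leq g-2$, then $h^0(K_S-M)=g-1-\deg M\geq 1$, and for $p$ off the (finite) zero set of a nonzero section the coboundary $M(p)|_p\to H^1(M)$ is nonzero, giving $h^0(M(p))=h^0(M)=0$; your Serre-duality identification of that coboundary with evaluation is correct, though one can reach the same conclusion more quickly from the Riemann--Roch comparison $h^0(M(p))-h^0(M)=1+h^0(K_S-M-p)-h^0(K_S-M)$. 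The degree bookkeeping also checks out: every invocation of the base case involves a bundle of degree $t-1\leq g-2$, using $t\leq g-1$, and the induction step correctly converts $y\in W_t\ominus W_r$ into $y\in W_{t-1}\ominus W_{r-1}$. In short, your argument is complete and self-contained, essentially reproducing the Abel/Riemann--Roch argument that the paper outsources to \cite{FK}.
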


\begin{coro}\label{minus2}
For $0 \leq r \leq g-1, $
\begin{enumerate}
\item $ \{ W_{g-1} + a \} \ominus \{ W_r + b \} = W_{g-1-r} + a - b$,
\item $\{ W_{g-1} + a \} \ominus \{ - W_{r} + b \} = -W_{g-1-r} + a - b + K$.
\end{enumerate}
\end{coro}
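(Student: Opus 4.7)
The corollary splits into two claims which are closely linked. Part (1) should be essentially a direct specialization of Proposition \ref{minus1} (the preceding proposition), since the hypothesis $0 \leq r \leq t \leq g-1$ is satisfied by taking $t = g-1$; substituting gives $\{W_{g-1} + a\} \ominus \{W_r + b\} = W_{(g-1)-r} + a - b$, which is exactly (1). There is nothing else to say for this part.

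For part (2), the plan is to use the symmetry of the theta divisor together with Riemann's theorem to convert the $-W_r$ into an expression to which (1) applies. Concretely, by Theorem \ref{RT} we have $\Theta = W_{g-1} - \tfrac{K}{2}$, and since the (suitably translated) Riemann theta function is even, the theta divisor satisfies $\Theta = -\Theta$ as a subset of $\mathrm{Jac}\,S$. Combining these gives the key identity
\[
W_{g-1} \;=\; -W_{g-1} + K,
\]
an equality of subsets of $\mathrm{Jac}\,S$.

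With this in hand I would unfold the definition of $\ominus$ directly. A point $p$ lies in $\{W_{g-1} + a\} \ominus \{-W_r + b\}$ if and only if $p - a + b + z \in W_{g-1}$ for every $z \in W_r$. Applying the symmetry identity above, this is equivalent to
\[
K - (p - a + b) - z \;\in\; W_{g-1} \quad \text{for every } z \in W_r,
\]
i.e.\ $K - p + a - b \in \bigcap_{z\in W_r}(W_{g-1} - z) = W_{g-1} \ominus W_r$. By part (1) (or equivalently Proposition \ref{minus1} with $a = b = 0$, $t = g-1$), this last intersection equals $W_{g-1-r}$. Hence $K - p + a - b \in W_{g-1-r}$, which rearranges to $p \in -W_{g-1-r} + a - b + K$, giving exactly (2).

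No serious obstacle is expected; the only non-routine ingredient is the symmetry $\Theta = -\Theta$, which is a classical consequence of the evenness of the Riemann theta function and is implicit in the statement of Theorem \ref{RT}. Everything else is a short set-theoretic manipulation reducing (2) to the already-established (1).
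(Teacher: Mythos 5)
Your route is sound, and it is essentially the standard one: the paper itself gives no in-text proof of this corollary (it refers to pp.\ 155--161 of \cite{FK}), and the argument there is the same reduction you propose. Part (1) is indeed just Proposition \ref{minus1} with $t=g-1$, and part (2) reduces to (1) via the symmetry $W_{g-1}=-W_{g-1}+K$, which you correctly extract from Theorem \ref{RT} together with the evenness of the theta function (it can also be obtained without theta functions from Riemann--Roch: for $\deg D=g-1$ one has $h^0(K-D)=h^0(D)\geq 1$, so $K-W_{g-1}=W_{g-1}$). One caution: your unfolding of the definition contains two sign slips that happen to cancel. With $A\ominus B=\bigcap_{x\in B}\{A-x\}$ and $x=b-z$, $z\in W_r$, membership of $p$ in $\{W_{g-1}+a\}\ominus\{-W_r+b\}$ means $p-a+b-z\in W_{g-1}$ for all $z\in W_r$ (not $+z$); applying the symmetry this becomes $K-p+a-b+z\in W_{g-1}$, i.e.\ $K-p+a-b\in\bigcap_{z\in W_r}(W_{g-1}-z)=W_{g-1}\ominus W_r=W_{g-1-r}$, exactly the intersection you display. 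By contrast, your intermediate line $K-(p-a+b)-z\in W_{g-1}$ would literally give $K-p+a-b\in\bigcap_{z\in W_r}(W_{g-1}+z)=W_{g-1}\ominus(-W_r)$, which is not yet known to equal $W_{g-1-r}$ (that is essentially statement (2) itself). Since the two slips cancel, your conclusion $p\in -W_{g-1-r}+a-b+K$ is correct; just redo the sign bookkeeping cleanly in the final write-up.
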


These two proofs can be found on P. 155, P. 156, and P. 161 of \cite{FK}. And we give a
sketch of proof to the following proposition.

\begin{prop}\label{InterT2}
For $0 \leq r \leq g-2$, fix $a \in \mathrm{Jac}S$ and let $b = a + x - y$ where $x \in W_1$
and $y \in W_{g-1-r}$. Then either $ \{ W_{r+1} + a \} \subseteq \{ W_{g-1} + b \}$ or
\[ \{ W_{r+1} + a \} \bigcap \{ W_{g-1} + b \} = \{ W_r + a + x \} \bigcup T, \]
where $T = \{ W_{r+1} + a \} \bigcap \{ - W_{g-2} - y + a + K \}$.
\end{prop}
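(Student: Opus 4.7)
The plan is to translate everything into divisors on $S$ via Abel's theorem and to carry out a case analysis on an associated degree-$g$ linear system. Throughout, set $x=\chi_{p_0}(p)$ for a point $p\in S$ and fix an effective representative $y_{\mathrm{div}}$ of degree $g-1-r$ with $\chi_{p_0}(y_{\mathrm{div}})=y$. A point $z$ lies in $\{W_{r+1}+a\}\cap\{W_{g-1}+b\}$ iff there exist effective divisors $D_1$ of degree $r+1$ and $D_2$ of degree $g-1$ with $\chi_{p_0}(D_1)=z-a$ and $\chi_{p_0}(D_2)=z-b$; by Abel's theorem this amounts to the linear equivalence $D_1+y_{\mathrm{div}}\sim D_2+p$ between two effective divisors of degree $g$.

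First I would verify the two reverse inclusions. The inclusion $\{W_r+a+x\}\subseteq\{W_{r+1}+a\}\cap\{W_{g-1}+b\}$ is immediate: for $z=\chi_{p_0}(D_1')+a+x$ with $D_1'$ effective of degree $r$, write $z=\chi_{p_0}(D_1'+p)+a$ and observe $z-b=\chi_{p_0}(D_1'+y_{\mathrm{div}})\in W_{g-1}$. For $T\subseteq\{W_{r+1}+a\}\cap\{W_{g-1}+b\}$, I would unwind the definition of $T$ using Corollary \ref{minus2}: the condition $z-a+y-K\in -W_{g-2}$ is equivalent, via Serre duality and Riemann--Roch applied to the degree-$g$ class $[D_1+y_{\mathrm{div}}]$ (whose Euler characteristic is $1$), to $h^0(D_1+y_{\mathrm{div}})\ge 2$; the linear system then contains a divisor vanishing at $p$, furnishing the required $D_2$.

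The forward inclusion is a case analysis on $h^0(D_1+y_{\mathrm{div}})$. If this invariant is $\ge 2$, the Riemann--Roch computation just given places $z$ in $T$. Otherwise $|D_1+y_{\mathrm{div}}|=\{D_1+y_{\mathrm{div}}\}$, so the equivalence $D_1+y_{\mathrm{div}}\sim D_2+p$ becomes the equality $D_1+y_{\mathrm{div}}=D_2+p$ of divisors, forcing $p$ to lie in the support of $D_1$ or of $y_{\mathrm{div}}$. Note that $h^0(D_1+y_{\mathrm{div}})=1$ already forces $h^0(D_1)=1$, so $D_1$ is the unique effective representative of its class and the two sub-cases are unambiguous. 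If $p\in D_1$, writing $D_1=p+D_1'$ yields $z=\chi_{p_0}(D_1')+a+x\in\{W_r+a+x\}$; the remaining sub-case, $p\notin D_1$ but $p$ in the support of $y_{\mathrm{div}}$, is where the dichotomy of the statement is forced.

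The crux is to show that this leftover sub-case triggers the first alternative of the statement. If $p$ lies in $y_{\mathrm{div}}$ then $y_{\mathrm{div}}-p$ is effective of degree $g-2-r$, so $y-x\in W_{g-2-r}$; picking any effective $y'$ with $\chi_{p_0}(y')=y-x$, for every effective $D$ of degree $r+1$ the divisor $D+y'$ is effective of degree $g-1$ and satisfies $\chi_{p_0}(D+y')=\chi_{p_0}(D)+y-x$, so $\chi_{p_0}(D)+a\in\{W_{g-1}+b\}$; this yields the blanket inclusion $\{W_{r+1}+a\}\subseteq\{W_{g-1}+b\}$. Consequently, whenever this blanket inclusion fails, the leftover sub-case cannot arise for any $z$ in the intersection and hence the intersection must equal $\{W_r+a+x\}\cup T$. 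The main subtlety is exactly this last step: recognising that the configuration $h^0(D_1+y_{\mathrm{div}})=1$ together with $p\notin D_1$ forces $y-x$ to be effective and thereby the global inclusion, cleanly splitting the analysis into the two alternatives of the proposition.
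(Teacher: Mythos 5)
Your proof is correct and follows essentially the same route as the paper's sketch: translate the intersection condition via Abel's theorem into the linear equivalence $D_1+y_{\mathrm{div}}\sim D_2+p$ and split on whether $\dim|D_1+y_{\mathrm{div}}|$ (the paper's $|P+D|$) is zero or positive, with the sub-case $p\in\mathrm{supp}(y_{\mathrm{div}})$ forcing the blanket inclusion (the paper phrases this contrapositively, assuming non-inclusion and noting that the divisor representing $y$ cannot contain the point representing $x$). The only minor difference is that you identify membership in $T$ via Riemann--Roch and Serre duality, whereas the paper gets it from the $\ominus$ computation of Corollary \ref{minus2}, and you also verify the reverse inclusions that the paper declares clear.
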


\begin{proof}[Sketch of proof]
$W_{r+1} + a$ and $W_{g-1} + b$ are two irreducible subvarieties of $\mathrm {Jac} S$.
If one is not contained in another, they will have intersection.
Thus we assume $\{ W_{r+1} + a \} \nsubseteq \{ W_{g-1} + b \}$.

Let $x = \chi_{p_0}(F)$
and $y = \chi_{p_0}(D)$ with effective divisors $F$ and $D$
of degree $1$ and $g-1-r$ respectively. $D$ can't contain the point $F$, otherwise
\[ W_{r+1} + a = W_{r+1} + b + y - x = W_{r+1} + b + \chi_{p_0}(F + D') - \chi_{p_0}(F)
= W_{r+1} + b + \chi_{p_0}(D') \subseteq W_{g-1} + b \]
Let $u \in \{ W_{r+1} + a \} \bigcap \{ W_{g-1} + b \} $.
Then there are two effective divisors $P$ and $Q$ of degree $r+1$ and $g-1$ respectively,
such that $u = \chi_{p_0} (P) + a = \chi_{p_0} (Q) + b$.
This implies $\chi_{p_0}(P + D) = \chi_{p_0}(Q + F)$. By Abel Theorem, $P + D \equiv Q + F$.
\begin{enumerate}
\item $\dim |P+D| = 0$. Then $P + D = Q + F$, implying that $F$ is contained in $P$.
Hence $u = \chi_{p_0}(P) + a = \chi_{p_0}(P' + F) + a = \chi_{p_0}(P') + x + a
\subseteq W_r + x + a$.
\item $\dim| P + D | > 0$. For any $\tilde{F} \in S$, there exists an effective divisor $\tilde{Q}$
such that $ P + D \equiv \tilde{F} + \tilde{Q}$.
Then $u = \chi_{p_0}( Q ) + b = \chi_{p_0}( \tilde{F} ) +  \chi_{p_0}( \tilde{Q} ) -  \chi_{p_0}(F) + b
= \chi_{p_0}( \tilde{F} ) +  \chi_{p_0}( \tilde{Q} ) + a - y$
implying $u \in \bigcap_{\tilde{F} \in S } \{ W_{g-1} +  \chi_{p_0}( \tilde{F} ) \} + a - y
= \{ W_{g-1} \ominus  - W_1 \} + a - y = - W_{g-2} + K + a - y $.
\end{enumerate}
Hence $\{ W_{r+1} + a \} \bigcap \{ W_{g-1} + b \} \subseteq \{ W_r + a + x \} \bigcup T$.
The reverse inclusion is clear.
\end{proof}

\begin{theo}\label{gTT1}
$\mathcal {J}^{tor}: \, \mathcal {T}or_g / \mathbb {Z}_2 \rightarrow \mathcal {H}_g$
is an embedding for $g\geq3$.
\end{theo}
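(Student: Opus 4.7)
The plan is to reduce the theorem to injectivity of $\mathcal{J}^{tor}$; Local Torelli Theorem \ref{lTT2} together with the analysis of the $\mathbb{Z}_2$-action around Proposition \ref{Z2action} already provides the immersion property on $\mathcal{T}or_g/\mathbb{Z}_2$, and once injectivity is established, the embedding conclusion follows because an injective holomorphic immersion of our complex space into a Hausdorff manifold is a topological embedding, with continuity of the local inverse supplied by the explicit Kuranishi matrix model from Section \ref{lTT}.

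For injectivity, suppose $\mathcal{J}^{tor}([p]) = \mathcal{J}^{tor}([q]) = Z \in \mathcal{H}_g$. Since $T_g$ is the kernel of the symplectic representation, marked symplectic bases on $H_1(X_p,\mathbb{Z})$ and $H_1(X_q,\mathbb{Z})$ are well-defined on $\mathcal{T}or_g$, and the equality of period matrices in $\mathcal{H}_g$ induces a canonical isomorphism of principally polarized abelian varieties $(\mathrm{Jac}\,X_p, \Theta_p) \cong (\mathrm{Jac}\,X_q, \Theta_q) = (A, \Theta)$ respecting these bases. Under this identification we obtain Abel--Jacobi images $W_1^p, W_1^q$, and more generally Brill--Noether subvarieties $W_d^p, W_d^q \subset A$. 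By Riemann's Theorem \ref{RT}, $\Theta = W_{g-1}^p - K_p/2 = W_{g-1}^q - K_q/2$, so $W_{g-1}^p$ and $W_{g-1}^q$ coincide as subvarieties of $A$ up to translation.

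The key reconstruction step is to recover $W_1$ from $W_{g-1}$ up to translation and the sign ambiguity $W_1 \leftrightarrow -W_1$. This is the classical Torelli reconstruction, which in the framework developed here is carried out as follows: starting from $W_{g-1}$, one applies Proposition \ref{InterT2} to decompose intersections $\{W_{r+1}+a\} \cap \{W_{g-1}+b\}$, extracting $W_r + a + x$ as an irreducible component, and then iterates and cleans up via Corollary \ref{minus2} and the $\ominus$ operation of Proposition \ref{minus1}. Parts (1) and (2) of Corollary \ref{minus2} account precisely for the sign ambiguity. One concludes that the unordered pair $\{W_1^p, -W_1^p\}$ equals $\{W_1^q, -W_1^q\}$ up to translation in $A$. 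Since $g \geq 3 \geq 2$, the Abel--Jacobi map embeds the curve into its Jacobian, so $W_1$ determines the Riemann surface up to biholomorphism, and the residual sign ambiguity corresponds exactly to the $\mathbb{Z}_2$-action induced by $-\ii_{2g} \in \mathrm{Sp}(g,\mathbb{Z})$. Hence $[p] = [q]$ in $\mathcal{T}or_g/\mathbb{Z}_2$.

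The main obstacle will be justifying the reconstruction step rigorously: one must verify that for generic translation parameters $a, b \in A$, the auxiliary component $T$ arising in Proposition \ref{InterT2} has strictly smaller dimension than $W_r + a + x$, so that the latter is unambiguously identifiable as the irreducible component of correct dimension. This requires a careful dimension count involving the special loci $W_g^1$ governed by Theorem \ref{InterT1} (to avoid the degenerate case $W_1 \subseteq W_{g-1}+b$) together with an induction on $r$ starting from the base case $r = g-2$.
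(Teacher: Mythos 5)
Your outline reduces the theorem to injectivity and then rests the entire injectivity argument on one sentence: ``one concludes that the unordered pair $\{W_1^p,-W_1^p\}$ equals $\{W_1^q,-W_1^q\}$ up to translation,'' i.e.\ on the classical reconstruction of the curve from its polarized Jacobian. That reconstruction \emph{is} the global Torelli theorem, which is exactly what is being proved, so invoking it (and explicitly deferring its justification as ``the main obstacle'') leaves the proof without its core. The paper does not prove an intrinsic reconstruction of $W_1$ from $\Theta$ at all. Instead, after normalizing so that the two marked surfaces share a symplectic basis, it produces a diffeomorphism $\phi:C\to C'$ and shows from equality of the full period matrices that $\phi^{\ast}[\theta'^{\alpha}]=[\theta^{\alpha}]$ in de Rham cohomology (formula \eqref{relation}), so that the two Abel--Jacobi images $W_1$ and $V_1$ inside the \emph{same} Jacobian differ by the varying vector $(f^1(p),\dots,f^g(p))$. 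The whole content is then to prove this vector vanishes: one takes the minimal $r$ with $V_1\subseteq \pm W_{r+1}+a$, and for $r>0$ derives a contradiction with minimality by intersecting $V_1$ with translates $W_{g-1}+b$, $b=a+x-y$, using the $g$-point sum formula $\sum_i p_i(b)=b+\tfrac{K+K'}{2}$ from Theorem \ref{InterT1} together with Proposition \ref{InterT2} (via \eqref{InterF}), a claim that $V_1\cap\{W_r+a+x\}$ contains at most one point, and a homological intersection-number argument showing it contains exactly one point for every $x\in W_1$; Corollary \ref{minus2} then converts the resulting rigidity as $x$ or $y$ varies into an inclusion $V_1\subseteq \pm W_s+\mathrm{const}$ with $s<r$. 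The case $r=0$ forces $V_1=W_1$, hence $f^i\equiv 0$, hence $\phi$ is holomorphic and the two points agree in $\mathcal{T}or_g/\mathbb{Z}_2$. None of this appears in your proposal, and your sketched substitute (choose generic $a,b$, argue that the auxiliary component $T$ of Proposition \ref{InterT2} has smaller dimension than $W_r+a+x$, induct downward from $r=g-2$) is not what the paper's lemmas deliver and is not obviously sufficient: the paper never identifies components by a dimension count, and the decisive constraint is the fixed-sum relation \eqref{Intersection} as the translation parameter varies, not genericity of $a,b$.

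Two smaller points. First, an injective immersion is not automatically a topological embedding without a properness or invariance-of-domain argument; the paper's standard is ``local embedding from the local Torelli theorems plus injectivity,'' and you should match that rather than assert the stronger general claim. Second, if you work with the abstract identification of the two Jacobians rather than the paper's comparison diffeomorphism $\phi$, you must still check that the biholomorphism produced by $W_1^p=\pm W_1^q+c$ is compatible with the markings defining the points of $\mathcal{T}or_g$ (this does follow, since the Abel--Jacobi embedding identifies $H_1$ of the curve with $H_1$ of the Jacobian and both markings come from the same symplectic basis of $H_1(A,\mathbb{Z})$, with the sign absorbed by $\widetilde{T}_g$), but as written this step is also left implicit.
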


\begin{proof}
From the discussion of Section \ref{lTT}, $\mathcal {T}or_g /
\mathbb{Z}_2$ is a complex orbifold of complex dimension $3g-3$. For
every point $p$ in the non-hyperelliptic locus, we have the
Kuranishi coordinate chart $\Delta_{p,\epsilon}$ centered at $p$,
which descends from $\mathcal {T}or_g$. As to the hyperelliptic
locus, we denote by $\Delta_{p,\epsilon} / \mathbb {Z}_2$ the local
coordinate chart around the hyperelliptic point according to the
local behavior of the hyperelliptic locus. From the local Torelli
theorems \ref{lTT1} and \ref{lTT2}, $\mathcal {J}^{tor}$ gives a
local embedding on both of these two kinds of coordinate charts. All
we need to show is that $\mathcal {J}^{tor}$ is injective. It is
easy to see that $\mathcal {T}or_g / \mathbb {Z}_2 \cong \mathcal
{T}_g / \widetilde {T}_g$. Thus the proof of one-to-one
correspondence between $\widetilde {T}_g$ orbit and its Jacobian is
our ultimate, which is equivalent to say that two points in
$\mathcal {T}_g$ with the same Jacobian must be related by some
element in $\widetilde {T}_g$.

According to \cite{RH1} and \cite{HL}, $\mathcal {H}_g$ can be
viewed as the isomorphism classes of principally polarized abelian
varieties together with a symplectic basis $(A, \gamma)$, where
$\gamma:\, H_1(\Sigma, \mathbb{Z}) \rightarrow H_1(A, \mathbb{Z})$
preserves the intersection paring on $\Sigma$ and the principally
polarized form on $A$. And the identification is given from
$(A, \gamma)$ to its period matrix with respect to this symplectic
basis. By changing the symplectic basis, we have the natural
$\mathrm {Sp} ( g, \mathbb{Z} )$ action on $\mathcal {H}_g$.
However, the kernel of the $\mathrm {Sp} ( g , \mathbb {Z} )$ action
is $\pm \ii_{2g}$. That is to say, for every principally polarized
abelian variety $A$,
\[ (A, \gamma) \cong (A, -\gamma). \]
Also $\mathcal {T}or_g$ can be identified with the isomorphism
classes of Riemann Surfaces together with a symplectic basis $(C,
\gamma)$, where $\gamma: \, H_1(\Sigma,\mathbb{Z}) \rightarrow
H_1(C,\mathbb{Z})$ preserves the intersection paring on $\Sigma$ and
$C$, since $\Gamma_g / T_g = \mathrm {Sp} (g,\mathbb{Z})$. Moreover,
the period map $\mathcal {J}^{tor}: \, \mathcal {T}or_g \rightarrow
\mathcal {H}_g$ is given by
\[ \begin{array}{rcl}
\mathcal {T}or_g & \longrightarrow & \mathcal {H}_g \\
(C,\gamma) & \longrightarrow & (\mathrm{Jac} C, \gamma),
\end{array} \]
where we have the natural isomorphism $H_1 ( C,\mathbb{Z} ) \cong H_1 (\mathrm{Jac}C,\mathbb{Z})$.

Now assume that two points $[C,[f]]$ and $[C',[f']]$ on $\mathcal
{T}_g$ are mapped to the same Jacobian, namely $(A, \gamma)$. Write
$(C,\gamma)$ and $(C', \gamma')$ on $\mathcal {T}or_g$ as the
corresponding two points descended from $[C,[f]]$ and $[C',[f']]$,
respectively. As $(C,\gamma)$ and $(C', \gamma')$ are mapped to the
same Jacobian $(A,  \gamma)$, their symplectic bases will be the
same up to a change of the sign. Without loss of generality, we may
assume that $(C, \gamma)$ and $(C', \gamma')$ share the same
symplectic basis after changing the sign.
Going back to the two
corresponding points on $\mathcal {T}_g$, the following picture
appears since we can see $\mathcal {T}_g$ from the deformation theoretic point of view:
\[ \xymatrix{ \Sigma & \Sigma \\
C \ar[u]^{f} \ar[r]^{\phi}& C' \ar[u]^{f'} },\] where $\phi$ is a
diffeomorphism obtained from the deformation of the complex
structures between $C$ and $C'$ with $[f \phi^{-1}
f'^{-1}] \in T_g$, since $C$ and $C'$ share the same symplectic basis.
Denote $F$ by $f \phi^{-1} f'^{-1}$. Then the commutative diagram follows
\[ \xymatrix{ & \Sigma & \\
C \ar[ur]^{f} \ar[rr]^{\phi}& & C'\ar[ul]_{Ff'} \\
}.\] In fact we will prove that $\phi$ is biholomorphic.
Hence $[C,[f]]$ and $[C',[f']]$ are related by $\widetilde {T}_g$.

To see this, we first recall the definition of the
Jacobian. The Jacobian of a Riemann Surface $X$ is nothing but
$\mathbb{C}^g \big / \Lambda$ and
\[ \Lambda = \mathbb{Z} \left\{ \int\limits_{A_1} \begin{pmatrix} \theta^1 \\ \vdots \\ \theta^g \\ \end{pmatrix},
\cdots, \int\limits_{A_g} \begin{pmatrix} \theta^1 \\ \vdots \\
\theta^g \\ \end{pmatrix}, \int\limits_{B_1} \begin{pmatrix}
\theta^1 \\ \vdots \\ \theta^g \\ \end{pmatrix}, \cdots,
\int\limits_{B_g} \begin{pmatrix} \theta^1 \\ \vdots \\ \theta^g \\
\end{pmatrix} \right\}, \] where $\{ \theta^{\alpha} \}_{\alpha =
1}^g$ is a basis of $H^0 (X, K_X)$. Since $[C,[f]]$ and $[C',[f']]$ are
mapped to the same Jacobian by the period map, their symplectic
bases $(A_{\alpha},B_{\alpha})$ and $(A'_{\alpha}, B'_{\alpha})$ are
related by $\phi$ together with
\begin{equation}\label{SameJ}
\int\limits_{A_{\alpha}} \begin{pmatrix} \theta^1 \\ \vdots \\
\theta^g \end{pmatrix} = \int\limits_{A'_{\alpha}} \begin{pmatrix}
\theta'^1 \\ \vdots \\ \theta'^g \end{pmatrix},\
\int\limits_{B_{\alpha}} \begin{pmatrix} \theta^1 \\ \vdots \\
\theta^g \end{pmatrix} = \int\limits_{B'_{\alpha}} \begin{pmatrix}
\theta'^1 \\ \vdots \\ \theta'^g \end{pmatrix}.
\end{equation}
Let
\[ \Omega = \begin{pmatrix} \theta^1 \\ \vdots \\ \theta^g \end{pmatrix},\
\Omega' = \begin{pmatrix} \theta'^1 \\ \vdots \\ \theta'^g
\end{pmatrix} .\] Set $\phi^{\ast} [ \theta'^{\alpha} ] = \sum_{\beta =
1}^{g} x_{\alpha \beta} [ \theta^{\beta} ] + \sum_{\beta = 1}^g
y_{\alpha\beta} [ \bar{\theta}^{\beta} ] $ where
$\phi^{\ast}: H^1_{ \mathrm{dR} } ( C', \mathbb{C} ) \rightarrow H^1_{ \mathrm{dR} }(C , \mathbb{C}) $
and $[\theta^{\beta}]$ is the de Rham class represented by $\theta^{\beta}$. Thus
\begin{equation}\label{ttheta}
\phi^{\ast} [ \Omega' ] =
\begin{pmatrix}  X & Y \end{pmatrix}
\begin{pmatrix} [ \Omega ] \\ [ \bar{\Omega} ] \end{pmatrix}.
\end{equation}
Put \eqref{SameJ} and \eqref{ttheta} together to get
\[ \int\limits_{A_{\alpha}} [ \Omega ]
= \int\limits_{A'_{\alpha}} [ \Omega'] = \int\limits_{\phi_{\ast}
A_{\alpha}} [ \Omega'] = \int\limits_{A_{\alpha}} \phi^{\ast} [ \Omega' ]=
\begin{pmatrix} X & Y \end{pmatrix} \int_{A_{\alpha}}
\begin{pmatrix} [ \Omega ] \\ [ \bar{\Omega} ]\\ \end{pmatrix}, \]
\[ \int\limits_{B_{\alpha}} [ \Omega ]
= \int\limits_{B'_{\alpha}} [ \Omega' ] = \int\limits_{\phi_{\ast}
B_{\alpha}} [ \Omega' ]= \int\limits_{B_{\alpha}} \phi^{\ast} [ \Omega' ] =
\begin{pmatrix} X & Y \end{pmatrix} \int_{B_{\alpha}}
\begin{pmatrix} [ \Omega ]\\ [ \bar{\Omega} ] \\ \end{pmatrix} .\]
Reformulating these two equalities into matrix form, we get
\begin{equation}\label{Periods}
\begin{pmatrix}
\int\limits_{A_{\alpha}} [ \Omega ] & \int\limits_{B_{\alpha}} [ \Omega ] \\
\int\limits_{A_{\alpha}} [ \bar{\Omega} ] & \int\limits_{B_{\alpha}}  [ \bar{\Omega} ] \\
\end{pmatrix}
= \begin{pmatrix} X & Y \\ \bar{Y} & \bar{X} \\ \end{pmatrix}
\begin{pmatrix} \int\limits_{A_{\alpha}} \begin{pmatrix} [ \Omega ] \\ [ \bar{\Omega} ] \end{pmatrix} &
\int\limits_{B_{\alpha}} \begin{pmatrix} [ \Omega ] \\ [ \bar{\Omega} ]
\end{pmatrix} \end{pmatrix}.
\end{equation}
Observe that $\det \begin{pmatrix}
\int\limits_{A_{\alpha}} [ \Omega ] & \int\limits_{B_{\alpha}} [ \Omega ] \\
\int\limits_{A_{\alpha}} [ \bar{\Omega} ] & \int\limits_{B_{\alpha}} [ \bar{\Omega} ] \\
\end{pmatrix} \neq 0$, since $\begin{pmatrix} [ \Omega ] \\ [ \bar{\Omega} ] \end{pmatrix}$ is the basis of
$H^1_{\mathrm{dR}} (C, \mathbb{C})$ and $( A_{\alpha}, B_{\alpha} )$ is the one of $H_1 ( C,\mathbb{Z} )$.
These imply that
\begin{equation}\label{relation}
X = \ii_g,\ Y = 0.
\end{equation}
Hence $\phi^{\ast}(\theta'^{\alpha}) = \theta^{\alpha} + df^{\alpha}$ for some $f^{\alpha}$.

Now we consider
\[ \begin{array}{cccc}
\chi_{p_0}: & C & \longrightarrow & \mathrm{Jac}C  \\
& p & \longrightarrow & \left[ \left( \int_{p_0}^{p} \theta^1, \cdots, \int_{p_0}^{p} \theta^{g} \right) \right]
\end{array} \]
embeds Riemann Surface $C$ into its Jacobian. The following diagram shows that two images of $C$ and $C'$,
denoted by $W_1$ and $V_1$ respectively, are related by $\phi$:
\[ \xymatrix {
(C,p_0) \ar[d]^{\phi} \ar@^{(->}[dr]^{\chi_{p_0}} \\
(C',\phi(p_0)) \ar@_{(->}[r]^{ \chi_{\phi(p_0)} } &  \mathbb{C}^g / \Lambda    }. \]
More precisely, let one smooth curve $\tau$ on $C$ connect $p_0$ and $p$ with $\phi(\tau)$ connecting $\phi(p_0)$ and $\phi(p)$.
Then we have
\[ \begin{split}
& \left[ \left( \int_{\phi(p_0)}^{\phi(p)} \theta'^1, \cdots, \int_{\phi(p_0)}^{\phi(p)} \theta'^{g} \right) \right]
= \left[ \left( \int_{\phi(\tau)} \theta'^1, \cdots, \int_{\phi(\tau)} \theta'^{g} \right) \right] \\
= & \left[ \left( \int_{\tau} \phi^{\ast} (\theta'^1), \cdots, \int_{\tau} \phi^{\ast} (\theta'^{g}) \right) \right]
= \left[ \left( \int_{\tau} ( \theta^1 + df^1 ), \cdots, \int_{\tau} ( \theta^{g} + df^g ) \right) \right] \\
= & \left[ \left(  f^1(p) - f^1(p_0) + \int_{\tau} \theta^1, \cdots, f^g(p) - f^g(p_0) + \int_{\tau} \theta^{g} \right) \right]
= \left[ \left( f^1(p) + \int_{p_0}^{p} \theta^1, \cdots, f^g(p) + \int_{p_0}^{p} \theta^{g} \right) \right]  \\
= & \left[ \bigg( f^1(p), \cdots, f^g(p) \bigg)
+ \left( \int_{p_0}^{p} \theta^1, \cdots, \int_{p_0}^{p} \theta^{g} \right) \right].  \\
\end{split} \]
Hence $W_1$ and $V_1$ are different by a varying vector $\left[ \left( f^1(p), \cdots, f^g(p) \right) \right]$
and here we normalize $f^i,\ 1 \leq i \leq g$ such that $f^i(p_0) = 0$. We would like to use the same polarization
(actually the same theta divisor) in the Jacobian to show that varying vector to be constant. Afterwards we will
associate $W_d^r$ and $V_d^r$ to $C$ and $C'$ through the mappings $\chi_{p_0}$ and $\chi_{\phi(p_0)}$ respectively
just as Definition \ref{IAM}.

Consider the smallest integer $r$ such that
\[ V_1 \subseteq W_{r+1} + a\     \mathrm{or}\     V_1 \subseteq -W_{r+1} + a \]
for some $a \in \mathbb{C}^g / \Lambda$. It is easy to see that $r \leq g-2$. Actually from Theorem \ref{RT},
\begin{equation}\label{VandW}
V_{g-1} - \frac{K'}{2} = W_{g-1} - \frac{K}{2}.
\end{equation}
Together with Theorem \ref{InterT1} and \eqref{VandW}, it will happen that $V_1 \subseteq W_{g-1} + b$ for $b + \frac{K+K'}{2} \in V^1_g$.

$\underline{\textrm{Case}\ 1: r=0}$. $V_1 = - W_1 + a $ or $V_1 = W_1 + a $.
But $V_1$ and $W_1$ start through the origin of $\mathbb{C}^g / \Lambda$.
Thus $a=0$. $V_1 = - W_1$ which means that $\phi_{\ast}$ reverses the symplectic basis of these two Riemann Surfaces.
Contradict with our assumption on $\phi$ ahead.
Thus $V_1 = W_1$, which forces all $f^i$ to be zero.

$\underline{\textrm{Case}\ 2: r>0}$. Suppose $V_1 \subseteq W_{r+1} + a$. Set $b = a + x - y$ where $x \in W_1$ and $y$ varies in $W_{g-1-r}$.
For fixed $x$, $V_1$ can't always lie in $W_{g-1} + b$ when $y$ runs through $W_{g-1-r}$. If so, then we would have
\[ V_1 \subseteq \bigcap_{y \in W_{g-1-r}} \{ W_{g-1} + a + x - y \} = \{ W_{g-1} \ominus W_{g-1-r} \} + a + x = W_r + a + x \]
by Corollary \ref{minus2}, contradicting to the minimality of $r$. Hence we have two following results
\begin{enumerate}
\item For any fixed $x \in W_1$, $V_1 \cap \{ W_{g-1} + b \}$ will be $g$ points for generic $y \in W_{g-1-r}$.
\item There exists some $y \in W_{g-1-r}$ such that $V_1 \cap \{ W_{g-1} + b \}$ will be $g$ points for generic $x \in W_1$.
Because it is impossible that for any fixed $y \in W_{g-1-r}$, $V_1 \subseteq \{ W_{g-1} + a + x - y \}$ when $x$ runs through $W_1$.
\end{enumerate}
Under the circumstance of the result $(1)$, we have, by Proposition \ref{InterT2},
\begin{equation}\label{InterF}
\begin{split}
V_1 \bigcap \{ W_{g-1} + b \} & = V_1 \bigcap \{ W_{g-1} + b \} \bigcap \{ W_{r+1} + a \} \\
                              & = \left( V_1 \bigcap \{ W_{r} + a + x \} \right) \bigcup \left( V_1 \bigcap T \right),   \\
\end{split}
\end{equation}
where $ V_1 \bigcap \{ W_r + a + x \}$ depends on $x$, while $V_1 \bigcap T$ on $y$.
Write the $g$ intersection points as $p_1(b), \cdots, p_g(b)$. From \eqref{VandW} and Theorem \ref{InterT1}, we have
\begin{equation}\label{Intersection}
\sum_{i=1}^g p_i(b) = b + \frac{K+K'}{2} = a + x - y + \frac{K+K'}{2}.
\end{equation}

\textbf{Claim}: For any fixed $x \in W_1$, $V_1 \bigcap \{ W_r + a + x \}$ has at most one point.

In fact, if there are two points in $V_1 \bigcap \{ W_r + a + x \}$ for some $x \in W_1$,
fixing that $x$, we know that equality \eqref{Intersection} holds for generic $y \in W_{g-1-r}$, leaving $p_1(b)$ and $p_2(b)$ fixed,
which implies that $a + x - W_{g-1-r} \subseteq \{ V_{g-2} + c \}$ with $c$ a constant.
Hence $-W_{g-1-r} \subseteq \{ V_{g-2} + c' \}$ with $c'$ a constant. By Corollary \ref{minus2}, we get
\[ V_1 = V_{g-1} \ominus V_{g-2} \subseteq \{ W_{g-1} + \frac{K'-K}{2}\} \ominus \{ -W_{g-1-r} - c'\}
= - W_r + \frac{K+K'}{2} + c', \]
contradicting to the minimality of $r$. Thus our claim is proved.

As $V_1$ and $W_{r} + x + a$ are subvarieties in $W_{r+1} + a$ with complementary dimensions,
$^{\#}\!(V_1 \cdot \{ W_r + x + a \}) \leq 1$ for all $x \in W_1$ from our claim.
And $W_r + x + a$ and $W_r + a$ have the same homology class. In fact, let us denote the origin of the Jacobian by $x_0$.
Consider a $C^{\infty}$ curve $\gamma(t),\ t \in [0,1]$ between $x_0$ and $x$ on $W_1$
such that $\gamma(0) = x_0$ and $\gamma(1) = x$. Then we have
$\partial ( \bigcup_{y \in \gamma(t)} \{ W_r + y +a \} ) = \{ W_r + x + a \} - \{ W_r + a \}$.
Hence $^{\#}\!(V_1 \cdot \{ W_r + x + a \}) = ^{\#}\!(V_1 \cdot \{ W_r + a \})$.
But the constant $^{\#}\!(V_1 \cdot \{ W_r + x + a \}) $ for all $x \in W_1$ can't be zero
since $V_1 = V_1 \bigcap \{ W_{r+1} + a\} = \bigcup_{ x \in W_1} \left( V_1 \bigcap \{W_r + x + a\} \right)$.
Thus $^{\#}\!(V_1 \cdot \{ W_r + x + a \}) = 1$ for all $x \in W_1$,
namely $V_1 \bigcap \{ W_r + x +a\}$ has one point for all $x \in W_1$.

Apply the result $(2)$. Fix that $y$ and we still have \eqref{InterF}. Equality \eqref{Intersection} holds for generic $x \in W_1$, leaving $p_2(b), \cdots, p_g(b)$ fixed.
Hence $V_1 + c'' \subseteq a + W_1 -y + \frac{K+K'}{2}$ with $c''$ a constant. This contradicts to the minimality of $r$. Therefore
the case $r > 0$ is impossible.

Now we have proved that $f^i \equiv 0,\ 1 \leq i \leq g$. Hence $\phi^{\ast}$ preserves holomorphic one forms from $C'$
to $C$. Choose coordinates centered at $p$ and $\phi(p)$, which are denoted by $(z,p)$ and $(w,\phi(p))$.
Pick $\Xi \in H^0(C',K_{C'})$ with $\Xi(\phi(p)) \neq 0$. Locally $\Xi$ can be written as
\[ \Xi = g(w) dw. \]
Pull $\Xi$ back by $\phi$, then we get holomorphic one form on $C$. However,
\[ \phi^{\ast} \Xi  = g(\phi(z)) \frac {\partial \phi} {\partial z} dz
+ g(\phi(z)) \frac {\partial \phi} {\partial \bar{z}} d \bar{z}. \]
Then $g(\phi(z)) \frac {\partial \phi} {\partial \bar{z}} = 0$. Restricting to the point $p$ and $g(\phi(p)) \neq 0$,
we have $\left. \frac {\partial \phi} {\partial \bar{z}} \right|_{z=0} = 0$. At last, $\phi$ is holomorphic,
finishing the proof of the theorem.
\end{proof}

\begin{rema}
In the discussion of the case $r>0$, if we suppose $V_1 \subseteq - W_{r+1} - a$, take $- W_{g-1} - b$ to intersect with
$V_1$. Following the same method, we will get $V_1 \subseteq - W_1 + c$ with $c$ a constant, contradicting the minimality
of $r$.
\end{rema}

\begin{coro}\label{gequal2}
For the case of $g=2$, $\mathcal {J}^{tor}:\, \mathcal {T}or_g
\rightarrow \mathcal {H}_g$ is an open embedding.
\end{coro}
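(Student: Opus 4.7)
The plan is to combine Local Torelli Theorem \ref{lTT1} with a dimension count for openness, and then to reuse the injectivity argument from Theorem \ref{gTT1}, which simplifies drastically when $g=2$.

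First I would establish openness. By Theorem \ref{lTT1}, for $g=2$ the period map $\Pi: \mathcal{T}_2 \to \mathcal{H}_2$ is an immersion on the whole of $\mathcal{T}_2$, not merely on the non-hyperelliptic locus. By Proposition \ref{Z2action}, the $\mathbb{Z}_2$ action on $\mathcal{T}or_2$ is trivial when $g=2$, so $\mathcal{T}or_2/\mathbb{Z}_2 = \mathcal{T}or_2$ and $\mathcal{J}^{tor}$ inherits the immersion property. A dimension count gives
\[
\dim_{\mathbb{C}} \mathcal{T}or_2 = 3g-3 = 3 = \tfrac{g(g+1)}{2} = \dim_{\mathbb{C}} \mathcal{H}_2,
\]
so the immersion is in fact a local biholomorphism, and in particular an open map.

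Next I would verify injectivity by specializing the global argument of Theorem \ref{gTT1} to $g=2$. Suppose $[C,\gamma]$ and $[C',\gamma']$ in $\mathcal{T}or_2$ are sent to the same point of $\mathcal{H}_2$. After the usual sign normalization on the symplectic basis one obtains a diffeomorphism $\phi:C\to C'$ coming from the deformation of complex structures, and the period computation reproducing \eqref{SameJ}--\eqref{relation} goes through verbatim to give $\phi^{*}(\theta'^{\alpha}) = \theta^{\alpha} + df^{\alpha}$. Consequently the Abel--Jacobi images $W_1$ and $V_1$ inside the common Jacobian differ by a varying translation parametrized by the $f^{\alpha}$.

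Here is the essential simplification. In Theorem \ref{gTT1} the injectivity is proved by choosing the smallest integer $r\leq g-2$ with $V_1 \subseteq \pm W_{r+1}+a$; for $g=2$ we have $g-2=0$, so the only admissible value is $r=0$, i.e.\ Case~$1$ of that argument. This immediately forces $V_1 = \pm W_1$, and since both curves pass through the origin one has $a=0$; the sign-reversing possibility $V_1=-W_1$ is excluded exactly as in Case~$1$ of Theorem \ref{gTT1}, by the symplectic basis normalization already carried out. Hence $V_1=W_1$, which pins down $f^{\alpha}\equiv 0$ for every $\alpha$. The local coordinate argument at the end of Theorem \ref{gTT1} (pulling back a local generator $\Xi=g(w)\,dw$ of $K_{C'}$ and reading off $\partial\phi/\partial\bar{z}=0$) then promotes $\phi$ to a biholomorphism, so $[C,\gamma]=[C',\gamma']$ in $\mathcal{T}or_2$. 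The main thing to verify is not really an obstacle: one just has to check that Case~$2$ ($r\geq 1$) of Theorem \ref{gTT1} cannot occur for $g=2$, which is built into the dimension bound $r\leq g-2$ and is precisely the reason this corollary is natural to record.
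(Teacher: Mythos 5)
Your proposal is correct and follows essentially the same route as the paper: openness from the everywhere-immersion of Theorem \ref{lTT1} together with $\dim_{\mathbb{C}}\mathcal{T}or_2=\dim_{\mathbb{C}}\mathcal{H}_2=3$, triviality of the $\mathbb{Z}_2$ action via Proposition \ref{Z2action}, and injectivity by invoking the proof of Theorem \ref{gTT1}. The paper is simply terser at the last step, while you usefully spell out why that argument specializes to $g=2$ (only the $r=0$ case can occur since $r\leq g-2$).
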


\begin{proof}
Theorem \ref{lTT1} tells us that $\mathcal{J}^{tor}:\ \mathcal {T}or_g \rightarrow \mathcal {H}_g$
is an immersion everywhere when $g=2$. Besides, $\mathcal {J}^{tor}$ is an open map from
the fact $\dim_{\mathbb{C}} \mathcal {T}or_g = \dim_{\mathbb{C}}
 \mathcal {H}_g = 3$.  Moreover, Proposition \ref{Z2action} implies that
$\mathbb{Z}_2$ is a trivial action on $\mathcal {T}or_g$ since any
Riemann Surface with $g=2$ is hyperelliptic, indicating that
$\widetilde{T}_g$ orbit is the same as $T_g$ orbit on $\mathcal
{T}_g$. The proof of Theorem \ref{gTT1} implies that $\mathcal
{J}^{tor}$ is an open embedding.
\end{proof}

\begin{coro}\label{gTT2}
$\mathcal {J}^{tor} : \mathcal {T}or_g \rightarrow \mathcal {H}_g$ is a $2:1$ branched covering map branched over
$\mathcal {H}\mathcal {E}\mathcal {T}or_g$ onto its image for $g\geq3$.
\end{coro}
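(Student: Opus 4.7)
The plan is to exploit the factorization
\[ \mathcal{J}^{tor}:\ \mathcal{T}or_g \xrightarrow{\ q\ } \mathcal{T}or_g/\mathbb{Z}_2 \xrightarrow{\ \bar{\mathcal{J}}^{tor}\ } \mathcal{H}_g \]
that was already introduced in Section \ref{lTT}, where $q$ is the quotient by the natural $\mathbb{Z}_2$-action and $\bar{\mathcal{J}}^{tor}$ is the induced map. By Theorem \ref{gTT1}, $\bar{\mathcal{J}}^{tor}$ is an embedding for $g\geq 3$, so it is a homeomorphism onto its image and induces an isomorphism of complex orbifolds onto $\bar{\mathcal{J}}^{tor}(\mathcal{T}or_g/\mathbb{Z}_2)\subset \mathcal{H}_g$. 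Consequently all covering-theoretic properties of $\mathcal{J}^{tor}$ onto its image reduce to the corresponding properties of the quotient map $q$.

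Next I would analyze $q$ using Proposition \ref{Z2action}. Over the non-hyperelliptic locus $\mathcal{T}or_g-\mathcal{HET}or_g$, the $\mathbb{Z}_2$-action is free, so $q$ is an unramified degree-$2$ covering there. Over each point $p$ of the hyperelliptic locus $\mathcal{HET}or_g$, the involution $\{[\phi]\}$ fixes $p$, so $q$ is $1:1$ on the hyperelliptic locus. Combined with the local description given right after Proposition \ref{Z2action}, namely that around a hyperelliptic point the Kuranishi chart $\Delta^{3g-3}$ splits as $\Delta^{2g-1}\times\Delta^{g-2}$ with $\mathbb{Z}_2$ fixing the first factor and acting by $-1$ on the second, $q$ is locally modeled on $\Delta^{2g-1}\times \Delta^{g-2}\to \Delta^{2g-1}\times(\Delta^{g-2}/\mathbb{Z}_2)$. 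This is a $2:1$ branched covering map, branched precisely over $\mathcal{HET}or_g$ (viewed inside $\mathcal{T}or_g/\mathbb{Z}_2$).

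Composing with the embedding $\bar{\mathcal{J}}^{tor}$, I conclude that $\mathcal{J}^{tor}:\mathcal{T}or_g\to\mathcal{H}_g$ is, onto its image, a $2:1$ branched covering map branched exactly over $\bar{\mathcal{J}}^{tor}(\mathcal{HET}or_g)$, which identifies with $\mathcal{HET}or_g$ itself via $\bar{\mathcal{J}}^{tor}$. There is essentially no obstacle here; the work has been done in Proposition \ref{Z2action} (which describes the $\mathbb{Z}_2$-fixed locus) and Theorem \ref{gTT1} (which shows the quotient embeds). The only subtlety is checking that the branch locus statement is about points of $\mathcal{T}or_g$ and not about points of the image, so one should state the branching on the source as occurring along $\mathcal{HET}or_g$, which is what the corollary asserts.
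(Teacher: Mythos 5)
Your proposal is correct and follows essentially the same route as the paper: the paper states the corollary is a direct consequence of Theorem \ref{gTT1}, relying implicitly on exactly the factorization through $\mathcal{T}or_g/\mathbb{Z}_2$ and the description of the $\mathbb{Z}_2$-action from Proposition \ref{Z2action} (with the local model $\Delta^{2g-1}\times(\Delta^{g-2}/\mathbb{Z}_2)$ near the hyperelliptic locus) that you spell out. You have simply made explicit the steps the paper leaves to the reader.
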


\begin{proof}
This is a direct consequence of Theorem \ref{gTT1}.
\end{proof}

\begin{prop}\label{orbit}
Let $\Delta_{p,\epsilon}$ be the Kuranishi coordinate chart on
$\mathcal {T}_g$. The period map $\Pi$ maps the $\Gamma_g$ orbit of
$\Delta_{p,\epsilon}$ onto the $\mathrm{Sp}(g,\mathbb{Z})$ orbit of
its image in $\mathcal {H}_g$.
\end{prop}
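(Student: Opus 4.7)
The plan is to reduce the statement to a variant of the transition formula established in Theorem \ref{Transt}. The key observation is that applying an element $[\phi]\in\Gamma_g$ to the chart $\Delta_{p,\epsilon}$ produces a Kuranishi chart $\Delta_{p,\epsilon}^{[\phi]}=[\phi]\Delta_{p,\epsilon}$ describing \emph{the same} family of complex structures $\{X_t\}_{t\in\Delta_{p,\epsilon}}$, but with a new Teichm\"{u}ller marking obtained by composing with $\phi$. Consequently, a representative diffeomorphism of $[\phi]$ pushes the symplectic basis $\{A_\gamma,B_\gamma\}_{\gamma=1}^g$ on $\Delta_{p,\epsilon}$ forward to a new symplectic basis $\{A'_\gamma,B'_\gamma\}_{\gamma=1}^g$ on $\Delta_{p,\epsilon}^{[\phi]}$, and by definition of the representation $\rho$ these two bases are linked by the matrix $\rho([\phi])=\begin{pmatrix} U & V \\ R & S\end{pmatrix}\in\mathrm{Sp}(g,\mathbb{Z})$.

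Next, I would run the calculation of Theorem \ref{Transt} verbatim with $\Delta_{p,\epsilon}$ and $\Delta_{p,\epsilon}^{[\phi]}$ in place of $\Delta_{p,\epsilon}$ and $\Delta_{q,\epsilon'}$. A simplification appears in the present setting: because the two charts parametrize identical complex structures fibre by fibre, the biholomorphism $\phi$ appearing in the proof of Theorem \ref{Transt} can be taken to be the identity at every point $r\in\Delta_{p,\epsilon}$, so that $A(t)=A(\tau)$ and the transition matrix $L_{pq}$ is exactly the block permutation of $\rho([\phi])$ dictated by the $\curvearrowright$ action. Carrying the computation through, or invoking the explicit expression \eqref{PM} for $\Pi(t)$, yields
$$\widetilde{\Pi}(t)\;=\;\begin{pmatrix} S & R \\ V & U\end{pmatrix}\curvearrowright \Pi(t),$$
precisely the relation announced in the introduction.

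Finally, the proposition follows in two directions. For the inclusion $\Pi\bigl(\bigcup_{[\phi]}[\phi]\Delta_{p,\epsilon}\bigr)\subseteq \bigcup_{M\in\mathrm{Sp}(g,\mathbb Z)} M\curvearrowright \Pi(\Delta_{p,\epsilon})$, the displayed formula is exactly what is needed. For the reverse inclusion, I would invoke the classical surjectivity of $\rho:\Gamma_g\to\mathrm{Sp}(g,\mathbb{Z})$: given any $M\in\mathrm{Sp}(g,\mathbb{Z})$, choose $[\phi]\in\Gamma_g$ with $\rho([\phi])$ the appropriate block rearrangement of $M$, and apply the same identity. The main technical obstacle I foresee is purely bookkeeping, namely keeping straight the block rearrangement $\begin{pmatrix} U & V \\ R & S\end{pmatrix}\leftrightarrow\begin{pmatrix} S & R \\ V & U\end{pmatrix}$ that arises from the direction in which $\mathrm{Sp}(g,\mathbb{Z})$ acts on $\mathcal{H}_g$ versus on $H_1(\Sigma,\mathbb Z)$; this is the same bookkeeping already carried out at the end of the proof of Theorem \ref{Transt}, and once it is fixed consistently the rest of the argument is routine.
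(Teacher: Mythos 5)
Your overall strategy --- view $\Delta^{[\phi]}_{p,\epsilon}$ as the same Kuranishi family with the Teichm\"uller marking composed with $\phi$, note that the two symplectic bases are linked by $\rho([\phi])=\left(\begin{smallmatrix} U & V \\ R & S\end{smallmatrix}\right)$, derive the equivariance $\widetilde{\Pi}(t)=\left(\begin{smallmatrix} S & R \\ V & U\end{smallmatrix}\right)\curvearrowright\Pi(t)$, and conclude with the surjectivity of $\rho$ --- is exactly the paper's, and the first and last steps are fine. The gap is in the middle step, where the key identity is asserted rather than proved, and the mechanism you offer for it does not work. You cannot run Theorem \ref{Transt} ``verbatim'' with $\Delta_{q,\epsilon'}$ replaced by $\Delta^{[\phi]}_{p,\epsilon}$: for $[\phi]\notin\Gamma_g^p$ these two charts are \emph{disjoint} in $\mathcal{T}_g$ (the paper shrinks $\epsilon$ precisely so that $\gamma\Delta_{p,\epsilon}\cap\Delta_{p,\epsilon}=\emptyset$ unless $\gamma\in\Gamma_g^p$), because $[X_t,[F_t]]$ and $[X_t,[\phi F_t]]$ are different points of Teichm\"uller space. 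Hence there is no common point $r$, no marking-compatible biholomorphism, and none of the matrices $C$, $C_r$ from that proof are available: what is needed here is an equivariance property of $\Pi$ under the $\Gamma_g$-action, not a coordinate-transition formula on an overlap.

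Moreover, the simplification ``$A(t)=A(\tau)$'' is false in general. The $A$-matrix of the translated chart is defined through the canonical basis normalized against the new $A$-cycles $\widetilde{A}_\alpha=U_{\alpha\beta}A_\beta+V_{\alpha\beta}B_\beta$; since $\int_{\widetilde{A}_\alpha}\theta^\beta_p=(U+V\pi_p)_{\alpha\beta}$, that canonical basis is $\widetilde{\Theta}_p=\bigl((U+V\pi_p)^{T}\bigr)^{-1}\Theta_p=:W\Theta_p$, and by linearity and uniqueness in Theorem \ref{etat} the corresponding matrix is $\widetilde{A}(t)=W A(t)\overline{W}^{-1}$, which differs from $A(t)$ unless the change of basis is trivial. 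Nor does ``invoking \eqref{PM}'' by itself close the gap, since \eqref{PM} on the translated chart is expressed through $\widetilde{A}(t)$ and the new period matrix, so the computation you skipped is still required. The direct way to finish (and the paper's way) is to keep the old frame $\theta^\gamma_p(t)=\theta^\gamma_p+A(t)^\gamma_\delta\bar{\theta}^\delta_p$ and integrate it over the new cycles: this gives $\widetilde{\sigma}(t)=U(\ii_g+A(t)^{T})+V(\pi_p+\bar{\pi}_pA(t)^{T})$ for the new $A$-periods and $R(\ii_g+A(t)^{T})+S(\pi_p+\bar{\pi}_pA(t)^{T})$ for the new $B$-periods, whence $\widetilde{\Pi}(t)=\left(\begin{smallmatrix} S & R \\ V & U\end{smallmatrix}\right)\curvearrowright\Pi(t)$ follows at once; with that identity in hand, your concluding argument via the surjectivity of $\rho$ is correct.
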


\begin{proof}
Recall that the Kuranishi coordinate chart $\Delta_{p,\epsilon}$ is given by
\[ \begin{array}{ccc}
\Delta_{p,\epsilon} & \rightarrow & \mathcal{T} _g \\
t & \rightarrow & [X_t,[F_t]],
\end{array} \]
where $(\mathcal {X},F)$ is the Kuranishi family with the Teichm\"{u}ller structure of $(X_p,[F_0])$ over
$\Delta_{p,\epsilon}$, while the coordinate map of $\Delta_{p,\epsilon}^{[\phi]}$ can be written as
\[ \begin{array}{ccc}
\Delta_{p,\epsilon}^{[\phi]} & \rightarrow & \mathcal{T} _g \\
t & \rightarrow & [X_t,[\phi F_t]],
\end{array} \]
where $\Delta_{p,\epsilon}^{ [\phi] } := [\phi]
\Delta_{p,\epsilon}$. Now the Kuranishi family becomes $(\mathcal
{X}, ( \phi \times 1 ) F)$, where $\phi \times 1:\,\Sigma \times
\Delta_{p,\epsilon}^{[\phi]} \rightarrow \Sigma \times
\Delta_{p,\epsilon}^{[\phi]}$, the same family as $(\mathcal {X},
F)$ up to a different Teichm\"{u}ller structure. Two symplectic bases
are linked by $\rho([\phi])$, denoted by $\begin{pmatrix} U & V
\\ R & S \\ \end{pmatrix} \in \mathrm{Sp}( g,\mathbb{Z} )$, i.e.,
\[ \begin{pmatrix} \widetilde{ A } \\ \widetilde{B} \\ \end{pmatrix} = \begin{pmatrix} U & V \\ R & S \\ \end{pmatrix}
\begin{pmatrix} A \\ B \\ \end{pmatrix},\] where
$\begin{pmatrix} \widetilde{ A } \\ \widetilde{B} \\
\end{pmatrix}$ and $\begin{pmatrix} A \\ B \\
\end{pmatrix}$ are the symplectic bases on
$\Delta_{p,\epsilon}^{[\phi]}$ and $\Delta_{p,\epsilon}$,
respectively. As we have seen, the matrix model of
$\Delta_{p,\epsilon}$ is
$$\Pi(t) = \begin{pmatrix}
\bar{\pi}_p & \pi_p \\ \ii_g  &  \ii_g \\
\end{pmatrix}\curvearrowright A(t)^{T}.$$ While on
$\Delta_{p,\epsilon}^{[\phi]}$, one has
\[ \begin{split}
\widetilde{\Pi}(t)_{\alpha\beta} &= \int_{ \widetilde{B}_{\alpha} } \widetilde{\sigma}(t)^{\gamma\beta} \theta^{\gamma}_p (t)\\
& = \int_{ \widetilde{B}_{\alpha} } \widetilde{\sigma}(t)^{\gamma\beta} \lk \theta^{\gamma}_p + A(t)^{\gamma}_{\delta} \bar{\theta}_p^{\delta} \rk\\
& = \int\limits_{ R_{\alpha\lambda} A_{\lambda} + S_{\alpha\lambda}
B_{\lambda} } \widetilde{\sigma}(t)^{\gamma\beta} \lk
\theta^{\gamma}_p
+ A(t)^{\gamma}_{\delta} \bar{\theta}_p^{\delta} \rk\\
& = \lk R_{\alpha\gamma} + S_{\alpha\lambda} \pi_p,_{\lambda\gamma}
+ R_{\alpha\delta} A(t)^{\gamma}_{\delta} + S_{\alpha\lambda}
\bar{\pi}_p,_{\lambda\delta} A(t)^{\gamma}_{\delta} \rk
\widetilde{\sigma}(t)^{\gamma\beta},
\end{split} \]
where $\widetilde{\sigma}(t)^{\alpha\beta}$ is the inverse matrix of $\widetilde{\sigma}(t)_{\alpha\beta}$. And
$\widetilde{\sigma}(t)_{\alpha\beta}$ is given by
\[  \begin{split}
\widetilde{\sigma}(t)_{\alpha\beta} & = \int_{ \widetilde{A}_{\alpha} } \theta^{\beta}_p(t) \\
& = \int\limits_{ U_{\alpha\lambda}A_{\lambda} + V_{\alpha\lambda}B_{\lambda} } \theta^{\beta}_p + A(t)^{\beta}_{\gamma} \bar{\theta}_p^{\gamma} \\
& = U_{\alpha\beta} + V_{\alpha\gamma} \pi_p,_{\gamma\beta} +
U_{\alpha\gamma} A(t)^{\beta}_{\gamma} + V_{\alpha\gamma}
\bar{\pi}_p,_{\lambda\gamma} A(t)^{\beta}_{\gamma} .\end{split} \]
Then we formulate all these into the matrix form:
\[ \begin{split}
\widetilde{\Pi}(t) & = \lk R (\ \ii_g +A(t)^{T} ) + S (\ \pi_p +
\bar{\pi}_p A(t)^{T} ) \rk \lk U (\ \ii_g +A(t)^{T} )
+ V (\ \pi_p + \bar{\pi}_p A(t)^{T} ) \rk^{-1} \\
& = \begin{pmatrix} S & R \\ V & U \\ \end{pmatrix}  \begin{pmatrix} \bar{\pi}_p & \pi_p \\ \ii_g & \ii_g \\ \end{pmatrix}\curvearrowright A(t)^{T} \\
& = \begin{pmatrix} S & R \\ V & U \\ \end{pmatrix}\curvearrowright
\Pi(t) .\end{split} \] Thus the $\Gamma_g$ orbit of
$\Delta_{p,\epsilon}$ is mapped, by the period map, onto the
$\mathrm{Sp}(g,\mathbb{Z})$ orbit of its matrix model $\Pi(t)$ in
$\mathcal {H}_g$, since the representation $\rho: \,\Gamma \rightarrow
\mathrm{Sp}(g,\mathbb{Z})$ is surjective.
\end{proof}

Denote by $\nu$ the transformation of $\mathrm {Sp}( g , \mathbb{Z} )$
\[ \begin{array}{ccc}
\mathrm {Sp} ( g, \mathbb{Z} ) & \rightarrow & \mathrm {Sp} ( g , \mathbb {Z} ) \\
\begin{pmatrix} U & V \\ R & S \\ \end{pmatrix} & \rightarrow & \begin{pmatrix} S & R \\ V & U  \\ \end{pmatrix} \\
\end{array} \]
and it is obvious that $\nu^2 = 1$.

\begin{theo}\emph{(Global Torelli Theorem on moduli space)}
$\mathcal {J}: \,\mathcal {M}_g \rightarrow \mathcal {A}_g$ is injective for $g\geq2$.
\end{theo}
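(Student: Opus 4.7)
The plan is to reduce the injectivity of $\mathcal{J}:\mathcal{M}_g \to \mathcal{A}_g$ to the already established embedding $\mathcal{J}^{tor}: \mathcal{T}or_g/\mathbb{Z}_2 \hookrightarrow \mathcal{H}_g$ from Theorem \ref{gTT1} (with Corollary \ref{gequal2} handling $g=2$), using Proposition \ref{orbit} as the bridge that converts ``same $\mathrm{Sp}(g,\mathbb{Z})$-orbit in $\mathcal{H}_g$'' into ``same image in $\mathcal{H}_g$''.

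First I would unwind the orbit descriptions. Suppose $[X_p],[X_q] \in \mathcal{M}_g$ satisfy $\mathcal{J}([X_p]) = \mathcal{J}([X_q])$, and lift them to points $p,q \in \mathcal{T}_g$. The hypothesis translates to $\Pi(p) = M \curvearrowright \Pi(q)$ for some $M \in \mathrm{Sp}(g,\mathbb{Z})$. The goal is to replace $q$ by a $\Gamma_g$-translate of itself so that $\Pi$-values become literally equal in $\mathcal{H}_g$.

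Second, I would invoke Proposition \ref{orbit}. Since $\rho: \Gamma_g \to \mathrm{Sp}(g,\mathbb{Z})$ is surjective and the involution $\nu$ defined at the end of the previous section is a bijection of $\mathrm{Sp}(g,\mathbb{Z})$, there exists $[\phi] \in \Gamma_g$ with $\nu(\rho([\phi])) = M$. The transition formula exhibited in the proof of Proposition \ref{orbit} then gives
\[
\Pi([\phi]\cdot q) \;=\; \nu(\rho([\phi])) \curvearrowright \Pi(q) \;=\; M \curvearrowright \Pi(q) \;=\; \Pi(p).
\]
Because $[\phi]\cdot q$ represents the same class in $\mathcal{M}_g$ as $q$, we may henceforth assume $\Pi(p) = \Pi(q)$ in $\mathcal{H}_g$.

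Third, I would descend to $\mathcal{T}or_g/\mathbb{Z}_2$. The period map factors as $\mathcal{T}_g \to \mathcal{T}or_g \to \mathcal{T}or_g/\mathbb{Z}_2 \xrightarrow{\mathcal{J}^{tor}} \mathcal{H}_g$. Theorem \ref{gTT1} (respectively Corollary \ref{gequal2} when $g=2$) asserts $\mathcal{J}^{tor}$ is injective, so the images of $p$ and $q$ in $\mathcal{T}or_g/\mathbb{Z}_2$ agree. Hence $p$ and $q$ differ by an element of $\widetilde{T}_g \subset \Gamma_g$, and represent the same point of $\mathcal{M}_g$, completing the proof. The main work has already been absorbed into Theorem \ref{gTT1} and Proposition \ref{orbit}; the only conceptual point here is that the $\nu$-twist appearing in Proposition \ref{orbit} does not obstruct the reduction, precisely because $\nu$ is an involutive bijection of $\mathrm{Sp}(g,\mathbb{Z})$ and $\rho$ is surjective.
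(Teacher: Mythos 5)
Your proposal is correct and follows essentially the same route as the paper: translate equality in $\mathcal{A}_g$ into $\Pi(p)=M\curvearrowright\Pi(q)$, use the surjectivity of $\rho$ together with the involution $\nu$ and the transformation rule of Proposition \ref{orbit} to find $[\phi]\in\Gamma_g$ with $\Pi([\phi]q)=\Pi(p)$, and then invoke Theorem \ref{gTT1} (Corollary \ref{gequal2} for $g=2$) to conclude that $p$ and $[\phi]q$ lie in the same $\widetilde{T}_g$ orbit, hence $p$ and $q$ in the same $\Gamma_g$ orbit. This matches the paper's argument step for step.
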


\begin{proof}
As we have seen from Corollary \ref{gTT2}, $\mathcal {J}^{tor}:
\,\mathcal {T}or_g \rightarrow \mathcal {H}_g$ is a $2:1$ branched
covering map onto its image, branching over $\mathcal {H}\mathcal
{E}\mathcal {T}or_g$ for $g\geq3$. That is to say that the
$\widetilde {T}_g$ orbits on $\mathcal {T}_g$ have one-to-one
correspondence to their Jacobian given by the period map $\Pi$. This
is also true for $g=2$, from the proof of Corollary \ref{gequal2}.
From Proposition \ref{orbit}, the $\Gamma_g$ orbits are mapped onto
$\mathrm{Sp} ( g , \mathbb {Z} )$ orbits. Assume that two $\Gamma_g$
orbits $[p]$ and $[q]$ of $\mathcal {M}_g$ are mapped to the same
$\mathrm {Sp}(g,\mathbb{Z})$ orbit by $\mathcal {J}$. We lift these
to $\Pi:\, \mathcal {T}_g \rightarrow \mathcal {H}_g$ and thus have
$\Pi(p) = L \curvearrowright \Pi(q)$ for some $L \in \mathrm {Sp} (
g , \mathbb{Z})$. There is the following exact sequence
\[ 1 \rightarrow T_g \rightarrow \Gamma_g \stackrel{\rho}{\rightarrow} \mathrm{Sp} (g,\mathbb{Z}) \rightarrow 1. \]
Pick $[\phi] \in \rho^{-1} ( \nu (L) )$. Then $\Pi ( [\phi]q ) = L \curvearrowright \Pi(q)$ by Proposition \ref{orbit}.
Hence $p$ and $[\phi]q$ are in the same $\widetilde{T}_g$ orbit, which implies that $p$ and $q$ are in the same
$\Gamma_g$ orbit.
\end{proof}

\section{Appendix}\label{app}
Recall that the natural isomorphism between the \v{C}ech cohomology
$\check{H}^1 (T_X)$ and the Dolbeault cohomology
$H_{\overline{\partial}}^{0,1} (T_X)$, and isomorphism between
$\check{H}^1 (\mathcal {O}_X)$ and $H^{0,1}_{\overline{\partial}}$
follows similarly. Assume that there is an open covering
$\bigcup_{\alpha} U_{\alpha}$ on $X$ and then the natural
isomorphism $\Psi$ is given by
\[  \begin{array}{cccc}
\Psi: & \check{H}^1 (T_X) & \longrightarrow & H_{\overline{\partial}}^{0,1} (T_X) \\
 & [\theta_{\alpha\beta}]    & \longrightarrow  & [ \overline{\partial} \xi^{\alpha} ] \\
\end{array}, \]
where $\xi^{\alpha} \in A^{0,0}(U_{\alpha}, T_X)$ and $\xi^{\beta} - \xi^{\alpha} = \theta_{ \alpha \beta }$.

Now we return to the proof of the Theorem \ref{lTT2}, that is, $X$
is a hyperelliptic Riemann Surface, covered by two affine charts
$U_0$ and $U_1$ as described in \cite[P. 568]{Kar}. The first derivative
of the period map in the direction $D_k,1 \leq k \leq 2g-1$ in terms of
\v{C}ech cohomology is given by
\[ \begin{array}{ccc}
H^0(X,\Omega^1) & \longrightarrow & \check{H}^1 (X,\mathcal {O}_X)  \\
\omega & \longrightarrow & [\theta_{k} \lrcorner \omega] \\
\end{array}, \]
where $[\theta_k] \in \check{H}^1(T_X)$ corresponds to
$[\mu_k] \in H^{0,1}_{\overline{\partial}} (T_X)$. It is obvious that
$[\theta_k \lrcorner \omega]$ is mapped to $[\mu_k \lrcorner \omega]$
by the natural isomorphism from $\check{H}^1(\mathcal {O}_X)$ to
$H^{0,1}_{\overline{\partial}}$.
The second derivative of period map in the direction of $D_{ij},2g \leq i < j
\leq 3g-3$ in terms of \v{C}ech cohomology is given by
\[ \begin{array}{ccc}
H^0(X,\Omega^1) & \longrightarrow & \check{H}^1 (X,\mathcal {O}_X)  \\
\omega & \longrightarrow & [\theta_{j} \lrcorner \mathcal {L}_{\theta_i} \omega] \\
\end{array}, \]
where $\mathcal {L}_{\theta_i}$ denotes Lie derivative along
$\theta_i$ and $[\theta_i] \in \check{H}^1( T_X )$ corresponds to
$[\mu_i] \in H^{0,1}_{\overline{\partial}} (T_X)$. It is easy to see
that $\theta_{j} \lrcorner \mathcal {L}_{\theta_i} \omega =
\theta_{j} \lrcorner \partial ( {\theta_i} \lrcorner \omega )$.
Hence we need to show that $[\theta_{j} \lrcorner \partial (
{\theta_i} \lrcorner \omega )]$ is mapped to $[\mu_i \lrcorner
\partial \mathbb{G} \overline{\partial}^{\ast} ( \mu_j
\lrcorner \omega ) + \mu_j \lrcorner \partial \mathbb{G}
\overline{\partial}^{\ast} ( \mu_i \lrcorner \omega )]$ by the
natural isomorphism from $\check{H}^1 ( \mathcal {O}_X )$ to
$H^{0,1}_{\overline{\partial}}$. The $i=j$ case follows from almost
the same method as below. By the natural isomorphism between
$\check{H}^1 (X, T_X)$ and $H^{0,1}_{\overline{\partial}} (X,T_X)$,
we get $\xi_i^1\in A^{0,0}(U_1,T_X)$ and $\xi_i^0\in A^{0,0}
(U_0,T_X)$ such that
\begin{equation}\label{ISO}
\begin{cases}
\xi^1_i - \xi^0_i = \theta_i, \\
\overline{\partial} \xi^{1}_i = \mu_i + \overline{\partial} f_i,
\end{cases}
\end{equation}
where $f_i \in A^{0,0} ( X , T_X )$. As $\mu_i$ can change in the
Dolbeault cohomology class, we can assume that $\mu_i \big |_{U_0
\cap U_1} = 0$. In fact, $\mu_i$ is $\overline{\partial}$-closed and
thus locally $\overline{\partial}$-exact, i.e., $\mu_i =
\overline{\partial} h_i$ on $U_0 \cap U_1$. The desired
representative can be chosen as $\mu_i - \overline{\partial} ( \rho
h_i )$, where $\rho$ is the suitable cut-off function. Moreover we
can choose $f_i$ such that $f_i = \xi^0_i$ on $U_0 \cap U_1$, for
example $f_i := \rho\xi^0_i \big |_{U_0 \cap U_1}$. By use of
\eqref{ISO}, on $U_1$, we have
\begin{align*}
&\mu_i \lrcorner \partial \mathbb{G}
\overline{\partial}^{\ast}( \mu_j \lrcorner \omega )+ \mu_j \lrcorner \partial \mathbb{G} \overline{\partial}^{\ast} ( \mu_i \lrcorner \omega )\\
 =&\overline{\partial} (\xi_i^1 - f_i) \lrcorner \partial \Big( (\xi^1_j - f_j) \lrcorner \omega \Big)
+ \overline{\partial} (\xi_j^1 - f_j ) \lrcorner \partial \Big(
(\xi^1_i - f_i) \lrcorner \omega \Big).
\end{align*}
Similarly, we have an analogous equality on $U_0$.

Now we shall identify the \v{C}ech and Dolbeault cohomology classes
above. This question is equivalent to finding $\phi_{ij}^1$ and
$\phi_{ij}^0$ belonging to $A^{0,0}(U_1)$ and $A^{0,0}(U_0)$,
satisfying the following equations
\begin{equation}\label{OBS}
\begin{cases}
\overline{\partial} \phi_{ij}^1  =  \overline{\partial} (\xi_i^1 -
f_i) \lrcorner \partial \Big( (\xi^1_j - f_j) \lrcorner \omega \Big)
+  \overline{\partial} (\xi_j^1 - f_j ) \lrcorner \partial \Big( (\xi^1_i - f_i) \lrcorner \omega \Big),\\
\overline{\partial} \phi_{ij}^0  =  \overline{\partial} (\xi_i^0 -
f_i) \lrcorner \partial \Big( (\xi^0_j - f_j) \lrcorner \omega \Big)
+  \overline{\partial} (\xi_j^0 - f_j ) \lrcorner \partial \Big( (\xi^0_i - f_i) \lrcorner \omega \Big),\\
\phi^1_{ij} - \phi^0_{ij} = \theta_{j} \lrcorner \partial (
{\theta_i} \lrcorner \omega ).
\end{cases}
\end{equation}
It is obvious that the solutions of the first two equalities of
\eqref{OBS} always exist since the right hand sides of these two
equalities are $(0,1)$-forms and clearly
$\overline{\partial}$-closed. As
\begin{align*}
&\overline{\partial} (\xi_i^1 - f_i) \lrcorner \partial \Big(
(\xi^1_j - f_j) \lrcorner \omega \Big)
+ \overline{\partial} (\xi_j^1 - f_j ) \lrcorner \partial \Big( (\xi^1_i - f_i) \lrcorner \omega \Big) \\
=&\ \overline{\partial} \Big( (\xi_i^1 - f_i) \lrcorner \partial \lk
(\xi^1_j-f_j) \lrcorner \omega \rk \Big)
- (\xi_i^1 - f_i) \lrcorner \partial \Big( \overline{\partial} ( \xi^1_j - f_j ) \lrcorner \omega \Big) \\
&+ \overline{\partial} \Big( (\xi_j^1 - f_j) \lrcorner \partial \lk
(\xi^1_i-f_i) \lrcorner \omega \rk \Big) - (\xi_j^1 - f_j) \lrcorner
\partial \Big( \overline{\partial} ( \xi^1_i - f_i ) \lrcorner
\omega \Big),
\end{align*}
we can write $\phi^1_{ij}$ as
\begin{align*}
&(\xi_i^1 - f_i) \lrcorner \partial \Big( (\xi^1_j-f_j) \lrcorner
\omega \Big) + (\xi_j^1 - f_j)
\lrcorner \partial \Big( (\xi^1_i-f_i) \lrcorner \omega \Big)\\
&- \overline{\partial}^{-1} \Big( (\xi^1_i - f_i) \lrcorner \partial
\lk \overline{\partial} (\xi_j^1 -f_j ) \lrcorner \omega \rk \Big) -
\overline{\partial}^{-1} \Big( (\xi^1_j - f_j) \lrcorner
\partial \lk \overline{\partial} (\xi_i^1 -f_i ) \lrcorner \omega
\rk \Big),
\end{align*}
where $\overline{\partial}^{-1} \Big( (\xi^1_i - f_i) \lrcorner
\partial \lk \overline{\partial} (\xi_j^1 -f_j ) \lrcorner \omega
\rk \Big)$ stands for some solution $g$ satisfying
$$\overline{\partial} g =
(\xi^1_i - f_i) \lrcorner \partial \lk \overline{\partial} (\xi_j^1
-f_j ) \lrcorner \omega \rk.$$ This notation is reasonable as the
solution always exists. Thus
\begin{align*}
   &\phi^1_{ij} - \phi^0_{ij}\\
 =&(\xi_i^1 - f_i) \lrcorner \partial \Big( (\xi^1_j-f_j) \lrcorner \omega \Big)- (\xi_i^0 - f_i) \lrcorner \partial \Big( (\xi^0_j-f_j) \lrcorner
\omega \Big)
\\
&+ (\xi_j^1 - f_j) \lrcorner \partial \Big( (\xi^1_i-f_i) \lrcorner
\omega \Big)
- (\xi_j^0 - f_j) \lrcorner \partial \Big( (\xi^0_i-f_i) \lrcorner \omega \Big) \\
&- \overline{\partial}^{-1} \Big( (\xi^1_i - \xi^0_i) \lrcorner
\partial \lk \overline{\partial} (\xi_j^1 -f_j ) \lrcorner \omega
\rk \Big)
- \overline{\partial}^{-1} \Big( (\xi^1_j - \xi^0_j) \lrcorner \partial \lk \overline{\partial} (\xi_i^1 -f_i ) \lrcorner \omega \rk \Big) \\
 =&(\xi^1_i - \xi^0_i) \lrcorner \partial \Big( (\xi^1_j - f_j)
\lrcorner \omega \Big)
+ ( \xi^0_i - f_i) \lrcorner \partial \Big( (\xi^1_j - \xi^0_j)\lrcorner \omega \Big)\\
&+ ( \xi^1_j - \xi^0_j ) \lrcorner \partial \Big( (\xi^1_i - f_i)
\lrcorner \omega \Big)
+ (\xi^0_j - f_j) \lrcorner \partial \Big( (\xi^1_i - \xi^0_i) \lrcorner \omega \Big) \\
&- \overline{\partial}^{-1} \Big( (\xi^1_i - \xi^0_i) \lrcorner
\partial \lk \overline{\partial} (\xi_j^1 -f_j ) \lrcorner \omega
\rk \Big)
- \overline{\partial}^{-1} \Big( (\xi^1_j - \xi^0_j) \lrcorner \partial \lk \overline{\partial} (\xi_i^1 -f_i ) \lrcorner \omega \rk \Big) \\
 =&(\xi^1_i - \xi^0_i) \lrcorner \partial \Big( (\xi^1_j - f_j)
\lrcorner \omega \Big)+ ( \xi^0_i - f_i) \lrcorner \partial \Big( (\xi^1_j - \xi^0_j)\lrcorner \omega \Big) \\
&+(\xi^1_j - \xi^0_j) \lrcorner \partial \Big( (\xi^1_i -\xi^0_i)
\lrcorner \omega \Big)+ (\xi^1_j -\xi^0_j) \lrcorner \partial \Big(
(\xi^0_i - f_i) \lrcorner \omega \Big)
+ (\xi^0_j - f_j) \lrcorner \partial \Big( (\xi^1_i - \xi^0_i) \lrcorner \omega \Big) \\
&- \overline{\partial}^{-1} \Big( (\xi^1_i - \xi^0_i) \lrcorner
\partial \lk \overline{\partial} (\xi_j^1 -f_j ) \lrcorner \omega
\rk \Big)
- \overline{\partial}^{-1} \Big( (\xi^1_j - \xi^0_j) \lrcorner \partial \lk \overline{\partial} (\xi_i^1 -f_i ) \lrcorner \omega \rk \Big) \\
 =&(\xi^1_j - \xi^0_j) \lrcorner \partial \Big( (\xi^1_i -\xi^0_i) \lrcorner \omega \Big) \\
&+ (\xi^0_j - f_j) \lrcorner \partial \Big( (\xi^1_i - \xi^0_i)
\lrcorner \omega \Big)
+ ( \xi^0_i - f_i) \lrcorner \partial \Big( (\xi^1_j - \xi^0_j)\lrcorner \omega \Big) \\
&+ (\xi^1_j -\xi^0_j) \lrcorner \partial \Big( (\xi^0_i - f_i)
\lrcorner \omega \Big)- \overline{\partial}^{-1} \Big( (\xi^1_j -
\xi^0_j) \lrcorner \partial \lk \overline{\partial} (\xi_i^1 -f_i )
\lrcorner \omega \rk \Big)
\\
& + (\xi^1_i - \xi^0_i) \lrcorner \partial \Big( (\xi^1_j - f_j)
\lrcorner \omega \Big)- \overline{\partial}^{-1} \Big( (\xi^1_i -
\xi^0_i) \lrcorner \partial \lk \overline{\partial} (\xi_j^1 -f_j )
\lrcorner \omega \rk \Big)
 \\
=& \theta_j \lrcorner \partial ( \theta_i \lrcorner \omega )+ (
\xi^0_i - f_i) \lrcorner \partial \Big( (\xi^1_j - \xi^0_j)\lrcorner
\omega \Big)
+ (\xi^0_j - f_j) \lrcorner \partial \Big( (\xi^1_i - \xi^0_i) \lrcorner \omega \Big) \\
= &\theta_j \lrcorner \partial ( \theta_i \lrcorner \omega ).
\end{align*}
The penultimate equality results from
\[ \bar{\partial} \Bigg( (\xi^1_j -\xi^0_j) \lrcorner \partial \Big( (\xi^0_i - f_i)
\lrcorner \omega \Big) \Bigg) = (\xi^1_j - \xi^0_j) \lrcorner
\partial \lk \overline{\partial} (\xi_i^1 -f_i ) \lrcorner \omega
\rk,  \]
\[ \bar{\partial} \Bigg( (\xi^1_i - \xi^0_i) \lrcorner \partial \Big( (\xi^1_j - f_j)
\lrcorner \omega \Big) \Bigg) =  (\xi^1_i - \xi^0_i) \lrcorner
\partial \lk \overline{\partial} (\xi_j^1 -f_j ) \lrcorner \omega
\rk, \] since we observe $\overline{ \partial } \xi^0_i = \overline
{
\partial } \xi^1_i$ on $U_0 \cap U_1$, and the last step stems from
our choice of $f_i$.


\end{document}